\newtheorem{theorem}{Theorem}
\newtheorem{lemma}{Lemma}
\newtheorem{corollary}{Corollary}
\theoremstyle{definition}
\def\beq{ \begin{equation} }
\def\eeq{ \end{equation} }
\def\mn{\medskip\noindent}
\def\ep{\varepsilon}
\def\square{\vcenter{\vbox{\hrule height .4pt
  \hbox{\vrule width .4pt height 5pt \kern 5pt
        \vrule width .4pt} \hrule height .4pt}}}
\def\L{\mathbb{L}}
\def\FF{\mathcal{F}}
\def\RR{\mathbb{R}}
\def\ZZ{\mathbb{Z}}
\def\AA{\mathcal{A}}
\def\LL{\mathcal{L}}
\def\QQ{\mathcal{Q}}
\def\HH{\mathcal{H}}
\def\VV{\mathcal{V}}
\def\EE{\mathcal{E}}
\def\BB{\mathcal{B}}
\def\clearp{}
\def\hh{\hspace{1ex}}
\def\tcr#1{#1}
\begin{document}

\title{A stochastic spatial model for the \\
sterile insect control strategy}
\author{Xiangying Huang and Rick Durrett\thanks{Both authors were partially supported by NSF grant DMS 1809967
from the probability program.} \\
Dept.~of Math, Duke University 
}
\date{\today}

\maketitle

\begin{abstract}
In the system we study, 1's and 0's represent occupied and vacant sites in the contact process with births at rate $\lambda$ and deaths at rate 1. $-1$'s are sterile individuals that do not reproduce but appear spontaneously on vacant sites at rate $\alpha$ and die at rate $\theta\alpha$. We show that the system (which is attractive but has no dual) dies out at the critical value and has a nontrivial stationary distribution when it is supercritical. Our most interesting results concern the asymptotics when $\alpha\to 0$. In this regime the process resembles the contact process in a random environment.

\end{abstract}  

\section{Introduction}

The idea that populations of economically important insect species might be controlled, managed or eradicated through genetic manipulation was conceived in the late 1930s by an American entomologist, Dr Edward F. Knipling. The sterile insect control strategy is based on releasing overwhelming numbers of sterile male insects into the wild. The sterile males compete with normal males to mate with the females. Females that mate with a sterile male produce no offspring, thus reducing the next generation's population. Sterile insects are not self-replicating and, therefore, cannot become established in the environment. Repeated release of sterile males when the population density is low can further reduce and in some cases eliminate pest populations. The technique has successfully been used in a large number of situations. See the 2005 book by Klassen and Curtis \cite{KlaCur} and the 2021 second edition of Dyck, Hendricks, and Robinson \cite{DHR}.

Here, we will construct a simple stochastic spatial model for this system. Rather than having male and female flies, the normal and sterile flies will compete for space, which we model as the $d$-dimensional integer lattice $\ZZ^d$. In our process $\xi_t$ each site $x\in \ZZ^d$ can be in state $\{1,0,-1\}$, where state 1 =  normal fly, 0 = empty site, and $-1 =$ sterile fly. We use this ordering of states so that the system will be attractive: if $\xi_0(x) \leq \bar{\xi}_0(x)$ for all $x\in\ZZ^d$ then the two processes can be constructed on the same space so that $\xi_t(x) \leq \bar{\xi}_t(x)$ for all $x\in\ZZ^d$ and all $t\geq 0$. See, for example, (B12) and (B13) in Liggett's book \cite{Liggett99} for a formal definition on attractiveness for interacting particle systems.

The 1's and 0's are an ordinary contact process.  $-1$'s are sterile individuals that do not reproduce but appear spontaneously on vacant sites at rate $\alpha$ and die at rate $\theta\alpha$. Thus, if $N_1$ is the number of nearest neighbors occupied by 1's, then the process has the following transition rates:
\begin{align*}
0\to 1 &\text{ at rate } \lambda N_1\\
1\to 0 &\text{ at rate }1\\
0\to -1&\text{ at rate } \alpha\\
-1\to 0&\text{ at rate }\theta\alpha.
\end{align*}
The death rate of sterile flies is not the same as ordinary flies since they have been treated with radiation or chemicals to make them sterile. We have chosen the last two rates so that we can let $\alpha\to 0$ and consider a situation in which the sterile flies change on a much slower time scale. This is probably not biologically realistic but, as the reader will see, it is mathematically interesting.

After this paper was submitted we learned that in 2016  Kevin Kuoch \cite{Kuoch} constructed a stochastic spatial model for the sterile fly control strategy
in which the evolution of the wild population is governed by a contact process whose growth rate is slowed down in presence of sterile individuals. Each site of $\ZZ^d$ is either empty (state 0), occupied by wild individuals only (state 1), by
sterile individuals only (state 2), or by both wild and sterile individuals (state 3). The rate with which wild individuals give birth (to wild individuals) on neighboring sites is $\lambda_1$ at sites in state 1 and $\lambda_2 < \lambda_1$ in state 3. Deaths of each type of individual are at rate 1. 

Kuoch's paper and ours are part of a large literature on multitype contact process that aims to understand what type of interactions allow for the two types to coexist. The first result in this direction was obtained in 1992 by Neuhauser \cite{Neu92} for the competing contact process in which sites are in state 0 (vacant), 1 or 2, which means they are occupied by one particle of type 1 or type 2. Individually the particles of type $i$ give birth at rate $\beta_i$ and die at rate $\delta_i$. It is conjectured (and partially proved) that if $\beta_1/\delta_1 > \beta_2/\delta_2$ and we start with infinitely many 1's then the 2's become extinct. In ecology this is referred to as ``Gause's Principle", which states that the number of coexisting species is limited by the number of resources. In this example of competing contact process there is one resource, space.

In 2008 Durrett gave Wald Lectures given at the Bernoulli Society meeting in Singapore. The associated paper \cite{RDWald} published in Annals of Applied Probability described results for a number of systems: grass-bushes-trees, colicin, multitype biased voter models, cyclic systems, spatial Prisoner's dilemma, etc. See the paper for details and references. More recent examples include  host-pathogen systems \cite{DL08},  the Staver-Levin model of competition of savanna and forest \cite{DZ15}, and the symbiotic contact process \cite{DY20}. These systems are challenging to study because even if they are attractive they do not have a dual process.

Remenik \cite{Remenik08} has earlier considered a system very similar to our sterile fly model. In his system $\eta_t : \ZZ^d \to \{1,0,-1\}$, where 1 = occupied, 0 = vacant, and $-1 =$ uninhabitable. His rates use different notation ($\beta f_1$ instead of $\lambda N_1$, and $\delta$ instead of $\theta$) but the major difference is that the third line above is changed to
$$
1, 0\to -1 \text{ at rate } \alpha.
$$
This may look like a minor change, but it greatly simplifies the analysis of the process. As Remenik says in his introduction, ``This version is simpler than the alternative in which only 0's can turn into $-1$'s (mainly because our process satisfies a self-duality relationship).'' To explain this, we construct Remenik's process in Section \ref{sec:grep} from a graphical representation, which allows us to define a dual process. Our process $\xi_t$ can be constructed on that structure so that if we have $\xi_0 \ge \eta_0$ then  $\xi_t \ge \eta_t$ for all $t$. We can also construct the contact process $\zeta_t$ on the same space with the other two processes so that if $\zeta_0 \ge \xi_0$ then $\zeta_t \ge \xi_t$ for all $t\ge 0$. Hence we have the following

\begin{theorem}
 If Remenik's process $\eta_t$ survives then our process $\xi_t$ does. If the ordinary contact process $\zeta_t$ dies out then $\xi_t$ does.
\end{theorem}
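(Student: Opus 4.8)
The plan is to prove both implications at once by building all three processes on one probability space and running a monotone coupling. First I would take the graphical representation of Section~\ref{sec:grep} that drives Remenik's $\eta_t$: along each directed edge $x\to y$ a rate-$\lambda$ Poisson process of birth arrows; at each site a rate-$1$ process of $\delta$-marks, a rate-$\alpha$ process of $\alpha$-marks, and a rate-$\theta\alpha$ process of $r$-marks. From the \emph{same} realization I would read off $\xi_t$ and the ordinary contact process $\zeta_t$ using the evident rules shared by all three: a birth arrow $x\to y$ acts exactly when the tail is a $1$ and the head is a $0$, turning the head into a $1$; a $\delta$-mark turns a $1$ into a $0$; an $r$-mark turns a $-1$ into a $0$. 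The only discrepancy is the response to an $\alpha$-mark at $x$: in $\eta_t$ it sends any state in $\{0,1\}$ to $-1$; in $\xi_t$ it sends $0$ to $-1$ and does nothing to $1$ or $-1$; and $\zeta_t$, which lives on $\{0,1\}^{\ZZ^d}$, ignores $\alpha$-marks and $r$-marks entirely. Each process is well defined from the realization by the standard finite-range percolation argument.

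Next I would establish the coupling inequality: with the order $-1<0<1$ (and $\zeta$'s value $0$ read as $\ge-1$), if $\eta_0\le\xi_0\le\zeta_0$ then $\eta_t\le\xi_t\le\zeta_t$ for all $t$. Since the marks form an a.s.\ locally finite set, this is an induction over event times in which one checks that each of the four event types preserves the coordinatewise order at the site(s) it touches. Most cases are immediate because every transition on which two of the processes disagree is one-sided: an $\alpha$-mark can only lower a coordinate, and it lowers $\eta$ by at least as much as it lowers $\xi$ (in fact it forces $\eta$ to $-1$), so $\eta\le\xi$ is safe; an $r$-mark raises a coordinate only from $-1$ to $0$, and $\zeta$ is always $\ge0$, so $\xi\le\zeta$ is safe, while $\eta$ and $\xi$ move in step. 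The only event that actually uses the inductive hypothesis at a neighboring site is a birth arrow $x\to y$ (and symmetrically a $\delta$-mark): if the birth fires in the lower process then its tail is a $1$ there, hence a $1$ above, and its head is a $0$ there, hence $0$ or $1$ above, so the head above is already a $1$ or becomes one. This verification is the step to carry out with care, but I do not expect a real obstacle: the content of the theorem is precisely that the extra moves peculiar to $\eta$ (the $1\to-1$ transition) and to $\xi$ (the $0\to-1$ and $-1\to0$ transitions), none of which $\zeta$ performs, all point in the order-respecting direction.

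Finally I would deduce the two implications by comparing occupied sets. On the coupled space $\{x:\eta_t(x)=1\}\subseteq\{x:\xi_t(x)=1\}\subseteq\{x:\zeta_t(x)=1\}$ for every $t$. Starting $\eta$ and $\xi$ from the same initial configuration (say all $1$'s, or a single occupied site), the first inclusion shows that whenever the $1$'s of $\eta_t$ persist for all time with positive probability, so do the $1$'s of $\xi_t$; thus survival of $\eta$ gives survival of $\xi$. For the second implication, start $\zeta$ from all $1$'s, which dominates any initial condition of $\xi$; if the contact process dies out then $\P(\zeta_t(x)=1)\to0$ for each $x$ (equivalently, by self-duality, a single site's progeny vanish a.s.), and hence $\P(\xi_t(x)=1)\le\P(\zeta_t(x)=1)\to0$, so the density of $1$'s in $\xi_t$ tends to $0$ and $\xi$ dies out. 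It is worth noting that ``$\xi$ dies out'' here refers only to the $1$'s: the $-1$'s of $\xi$ never disappear, since they are created at rate $\alpha$ on every vacant site, so all of these comparisons are at the level of the set of $1$'s rather than of the full configuration.
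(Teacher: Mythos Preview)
Your proposal is correct and follows exactly the paper's approach: construct all three processes on the same graphical representation and verify that the response of each process to the four event types preserves the order $\eta_t\le\xi_t\le\zeta_t$. The paper does not spell out the case-by-case verification or the translation from the coupling to the survival/extinction statements; your write-up simply fills in those routine details.
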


Combining this observation with Theorem 1 in Remenik's paper \cite{Remenik08} gives information about the phase diagram drawn in Figure  \ref{fig:pd}. To explain the left hand side of the picture, we note that in Remenik's model sites transition to $-1$ at rate $\alpha$ independent of whether the state is 1 or 0, and back to 0 at rate $\alpha\theta$, so the fraction of time spent in state $-1$ is $1/(1+\theta)$. The intervals in which a site is in state $-1$ are exponential with rate $\alpha\theta$. If in the complement of the union of these intervals there are no infinite paths that go up and jump to nearest neighbors, then the system will die out for all $\lambda$. This occurs for small $\theta$.

\begin{figure}[ht]
\begin{center}
\begin{picture}(240,220)
\put(30,30){\line(1,0){180}}
\put(30,30){\line(0,1){180}}
\put(210,30){\line(0,1){180}}
\put(30,210){\line(1,0){180}}
\put(30,80){\line(1,0){180}}
\put(7,75){$\lambda_{con}$}
\put(15,140){$\lambda$}
\put(10,205){$\infty$}
\put(85,50){$\xi_t$ dies out}
\put(28,10){0}
\put(130,10){$\theta$}
\put(205,10){$\infty$}
\put(95,10){$\theta_c$}
\put(100,25){\line(0,1){10}}
\put(100,80){\line(0,1){130}}
\put(40,195){$\lambda_{Rem} = \infty$}
\put(100,210){\line(1,-5){10}}
\put(110,160){\line(1,-3){12}}
\put(122,124){\line(1,-1){20}}
\put(170,90){\line(-2,1){28}}
\put(210,80){\line(-4,1){40}}
\put(140,160){$\xi_t$ survives}
\end{picture}
\caption{Phase diagram when $\alpha=1$. $\lambda_{Rem}$ is the critical value for Remenik's model, while $\lambda_{con}$ is the critical value for the ordinary contact process.}
\end{center}
\label{fig:pd}
\end{figure}
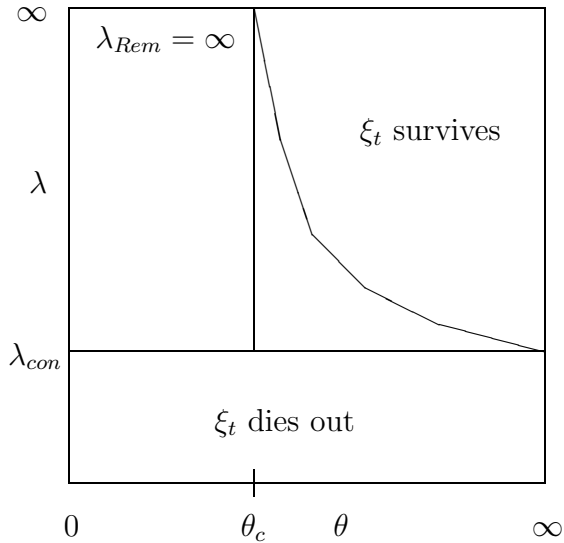

Let $\xi_t^0$ be the process starting with a single 1 at the origin, and, with some abuse of notation, let $\xi_t^0$ also denote the set of 1's in the process. For fixed $\alpha$ and $\theta$ let 
$$
\lambda_c(\alpha,\theta) = \inf \{ \lambda: P( \xi^0_t \neq \emptyset \hbox{ for all $t$}) > 0 \},
$$
be the critical value for survival starting from a single occupied site. 
Using the classical block construction for the contact process introduced in Bezuidenhout and Grimmett \cite{BG90} (see also Section 2, Part I of Liggett's book \cite{Liggett99}), it is straightforward to show Theorem \ref{dieatcrit}. The block construction is a general technique in the study of interacting particle systems where we look at the macroscopic behavior of the process on a large space-time block of our choice, and use that information to derive results on the process. This idea will also be applied in the proof of other theorems in this paper.

\begin{theorem}\label{dieatcrit}
When $\lambda = \lambda_c(\alpha,\theta)$, $\xi_t$ dies out. When $\lambda > \lambda_c(\alpha,\theta)$ there is a nontrivial stationary distribution $\xi^1_\infty$, which is the limiting distribution of the process $\xi_t$ starting from all 1's.
\end{theorem}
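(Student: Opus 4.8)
The plan is to combine the attractiveness of $\xi_t$ with the block construction of Bezuidenhout and Grimmett \cite{BG90}. Existence of the limit is the easy half: since the configuration with a $1$ at every site is the top of the state space in the order $-1<0<1$, attractiveness makes $t\mapsto\xi^1_t$ stochastically decreasing, so it converges weakly to a translation-invariant measure $\xi^1_\infty$, which is stationary as a weak limit along the semigroup. One cannot read off nontriviality merely from Theorem 1 and Remenik's complete-convergence theorem, because $\lambda_c(\alpha,\theta)$ may be strictly smaller than $\lambda_{Rem}$; so the block construction has to be carried out directly for $\xi_t$. What remains to prove is then (i) that $\xi_t$ dies out when $\lambda=\lambda_c(\alpha,\theta)$, and (ii) that $\xi^1_\infty$ gives positive mass to $\{\xi(0)=1\}$ when $\lambda>\lambda_c(\alpha,\theta)$.

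The heart of the matter is to run $\xi_t$ from a large cube full of $1$'s and show that, for $\lambda$ in the survival regime, after a bounded time it contains with probability close to $1$ a translate of that cube again full of $1$'s, in a way that can be iterated; this gives a coupling with a $1$-dependent supercritical oriented site percolation whose density of open sites is as close to $1$ as we wish, once the cube and the time are large. As in \cite{BG90}, the substantive input is the spreading estimate: if $P(\xi^0_t\neq\emptyset\ \forall t)>0$, then with high probability the set of $1$'s grows to fill some cube inside a bounded region, after which one restarts. The single feature not present in the pure contact process is the $-1$'s. These can only slow the growth of the $1$'s, they occupy only sites currently in state $0$ (so a cube full of $1$'s contains none), and within a bounded space-time window they are produced by finitely many rate-$\alpha$ marks; conditioning those marks to miss the bounded region where the $1$'s are spreading costs only a fixed positive factor. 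Verifying that the restart machinery of \cite{BG90} is robust to these obstacles is, I expect, the main (though essentially routine) point of the proof.

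Granting the block construction, claim (i) follows by the usual openness argument. If $\xi_t$ survived at $\lambda=\lambda_c$, the spreading estimate would hold there; but it concerns only finitely many Poisson clocks in a bounded space-time region, so its probability depends continuously on $\lambda$, and it would therefore hold for some $\lambda<\lambda_c$ with probability still large enough to drive the oriented-percolation comparison. That would force survival for $\lambda<\lambda_c$, contradicting the definition of $\lambda_c(\alpha,\theta)$.

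Claim (ii) follows too. When $\lambda>\lambda_c(\alpha,\theta)$ the comparison with a supercritical oriented percolation is available, and starting $\xi_t$ from all $1$'s makes every level-$0$ site of the percolation open, so it is fully wet initially; a supercritical oriented percolation started fully wet retains a positive density of wet sites for all time, and unwinding the block construction this yields $P(\xi^1_t(0)=1)\ge c>0$ uniformly in $t$. Since $\{\xi(0)=1\}$ is a clopen cylinder event, weak convergence (or simply the monotonicity of $t\mapsto\xi^1_t$) gives $P(\xi^1_\infty(0)=1)\ge c>0$, so $\xi^1_\infty$ is nontrivial.
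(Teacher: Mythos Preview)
Your outline follows the same route as the paper: run Bezuidenhout--Grimmett directly for $\xi_t$, obtain a comparison with finite-range dependent oriented percolation whose open-site density is as close to $1$ as desired, and deduce both extinction at criticality (via continuity in $\lambda$ of the finite space-time event) and nontriviality of $\xi^1_\infty$ (via positive density of wet sites when the percolation is started fully wet). The one place your sketch is misleading is the handling of the $-1$'s. The heuristic of ``conditioning the $\alpha$-marks to miss the bounded region'' is not what drives the argument: that buys only a fixed positive factor, whereas the BG90 finite space-time condition requires the spreading event with probability arbitrarily close to $1$. What actually makes the machinery go through is that $\xi_t$ has \emph{positive correlations} when started from the configuration $\chi_A$ that is $1$ on $A$ and $-1$ on $A^c$; this follows from Harris's theorem (Theorem II.2.14 of \cite{Lig85}) because every transition in $\xi_t$ moves between comparable states in the order $-1<0<1$. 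With positive correlations in hand, Liggett's proofs of Propositions I.2.2--I.2.12 carry over essentially verbatim; the analogue of I.2.1 (survival from $\chi_{[-n,n]^d}$ with probability tending to $1$) needs a separate argument since there is no dual, and the paper supplies one via Birkhoff's ergodic theorem applied to the spatial shift on the graphical representation. Your conditioning idea does enter, but only at the minor step of verifying $P^{\chi_{\{0\}}}({}_n\xi_1\supseteq[-n,n]^d)>0$, where one needs the $-1$'s in $[-n,n]^d$ to flip to $0$ before the births spread.
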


Since our process is attractive, if we let $\xi^1_t$ be the process starting from all 1's then $\xi^1_t$ converges to a limit that we call $\xi^1_\infty$ as time goes to infinity. For a contact process $\zeta_t$, by duality we have $P(\zeta^1_\infty(x)=1)=P(\zeta^x_t\neq \emptyset \text{ for all }t>0)>0$ when $\lambda>\lambda_{con}$. Hence the existence of a nontrivial stationary distribution for the contact process is a straightforward consequence of duality. However, our process does not have a dual so we need to show that $\xi^1_\infty$ is nontrivial for $\lambda > \lambda_c(\alpha,\theta)$.

Remenik \cite{Remenik08} proved this conclusion for his model. Since his process has a dual, a corollary of the proof of Theorem \ref{dieatcrit} is the complete convergence theorem. We will state his result in the next section after we have described the graphical representation.

The proofs of Theorems 1 and 2 are fairly routine. Things become more interesting when we study the behavior when $\alpha$ is small. In this case the system behaves like a contact process in a random environment where $-1$'s are obstacles that block the birth of 1's. However, over longer time scales the $-1$'s flip so the situation is different from the models of the contact process in a static random environment that have been studied earlier. Our initial goal was to find the asymptotics for the critical value when $\alpha\to 0$. As you will see, that has turned out to be a very challenging problem.

\subsection{Generic block construction} \label{sec:generic}

The proofs of Theorems  \ref{percsurv}, \ref{d1die}, and \ref{d2die} which describe properties of the system with small $\alpha$ are based on a thirty-year-old technique called the block construction. The first example was \cite{MBRD}. Durrett's 1993 St.~Flour Notes \cite{StFlour} give a description of the theory and a number of examples. Modifications of this technique are needed in the three proofs, so we begin with a description of a generic application: proving that a system $\xi_t: \ZZ \to \{0,1\}$ has a positive probability of not dying out. 

Let ${\cal L} = \{ (m,n) \in \ZZ^2 : m,n \ge 0 \hbox{ and $m+n$ is even} \}$ be the renormalized lattice and turn ${\cal L}$ into an oriented graph by adding edges $(m,n) \to (m+1,n+1)$ and $(m,n) \to (m-1,n+1)$. The first coordinate is space while the second is time. For each $(m,n) \in {\cal L}$ we have a block 
$$
B_{m,n} = (2mL,nT) + [-4L,4L] \times [0,T].
$$
The next picture should help explain the definitions.

\begin{figure}[ht]
\begin{center}
\begin{picture}(260,180)
\put(40,30){\line(1,0){160}}
\put(40,90){\line(1,0){200}}
\put(80,150){\line(1,0){160}}
\put(40,30){\line(0,1){60}}
\put(80,90){\line(0,1){60}}
\put(200,30){\line(0,1){60}}
\put(240,90){\line(0,1){60}}
\put(100,25){\line(0,1){10}}
\put(140,25){\line(0,1){10}}
\put(180,85){\line(0,1){10}}
\put(140,85){\line(0,1){10}}
\put(100,85){\line(0,1){10}}
\put(60,85){\line(0,1){10}}
\put(220,145){\line(0,1){10}}
\put(180,145){\line(0,1){10}}
\put(140,145){\line(0,1){10}}
\put(100,145){\line(0,1){10}}
\put(25,15){$-4L$}
\put(190,15){$4L$}
\put(90,12){$-L$}
\put(135,12){$L$}
\put(45,73){$-3L$}
\put(170,73){$3L$}
\put(233,75){$6L$}
\put(90,158){$-L$}
\put(135,158){$L$}
\put(170,158){$3L$}
\put(210,158){$5L$}
\put(118,28){$\star$}
\put(78,88){$\star$}
\put(158,88){$\star$}
\put(118,148){$\star$}
\put(198,148){$\star$}
\put(117,38){\vector(-2,3){30}}
\put(123,38){\vector(2,3){30}}
\put(157,98){\vector(-2,3){30}}
\put(163,98){\vector(2,3){30}}
\end{picture}
\caption{Picture of the generic block construction in $d=1$. 
Points of ${\cal L}$ are marked by $\star$s.}
\label{genericbc}
\end{center}
\end{figure}
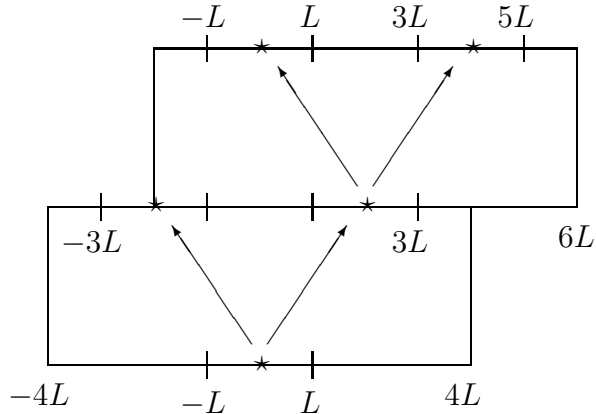

We say that the point $(m,n) \in {\cal L}$ is \textit{wet} if the configuration $\xi_{nT}$ restricted to $I_m = 2mL + [-L,L]$ has a specified property $H$ (for happy), e.g., there are at least $K$ sites in state 1 in $I_m$ at time $nT$.
In most applications the process $\xi_t$  is constructed from an infinite collection of Poisson processes called a graphical representation, see e.g., Section \ref{sec:grep}. We will construct good events $G_{m,n}$ so that a wet site $(m,n)$ can make both $(m+1,n+1)$ and $(m-1,n+1)$ wet if $G_{m,n}$ occurs. Usually $G_{m,n}$ is chosen to have suitable translation invariant properties.

The main step in the block construction is to show that given $\ep>0$ we can pick $L$ and $T$ so that if $(m,n)$ is wet then  $(m-1,n+1)$ and $(m+1,n+1)$ are wet with probability at least $1-\ep$, and the good event $G_{m,n}$ that guarantees this can be determined from the Poisson points in $B_{m,n}$. Note that the box $B_{m,n}$ intersects $B_{m-2,n}$ and  $B_{m+2,n}$ but does not intersect the interior of any other $B_{k,\ell}$ with $(k,\ell) \in {\cal L}$. This observation implies that the events $G_{m,n}$ have a finite range of dependence, $M$, so Theorem 4.1 in \cite{StFlour} guarantees that if $\ep<\ep_M$ for some $\ep_M$ depending on $M$ and $(0,0)$ is wet then with probability at least $19/20$ there is an infinite path of wet sites on ${\cal L}$ starting at $(0,0)$. If $H$ has been defined suitably then the existence of an infinite path implies that the process does not die out. A second result in \cite{StFlour}, Theorem 4.2, allows one, in most cases, to prove the existence of a nontrivial stationary distribution.

\clearp

\subsection{Survival for small $\alpha$ in $d=2$} \label{sec:introth3}

Let $p_c^{site}(\ZZ^2)$ denote the critical value of the site percolation in $\ZZ^2$.  We consider only the two-dimensional case to make the percolation arguments simpler. In this case the theory is developed by the use of ``sponge crossings,'' i.e., left-to-right or top-to-bottom crossings of rectangles by open sites. See, for example, Kesten's book \cite{Kesten}. In $d=3$ a left-right crossing of a cube need not intersect a top-to-bottom crossing, so the theory becomes much more complicated, see Grimmet's book \cite{Grimmett}.

\begin{theorem} \label{percsurv}
In $d=2$ if $\theta/(1+\theta)>p_c^{site}(\ZZ^2)$ and $\lambda > \lambda_{con}(\ZZ)$ the critical value for the contact process on $\ZZ$, then our process survives for small $\alpha$.
\end{theorem}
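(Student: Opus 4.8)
The idea is to run the block construction described in Section~\ref{sec:generic}, but at a carefully chosen spatial/temporal scale $L,T$ that respects the two-time-scale structure. When $\alpha$ is small, on time scales of order $1$ the sterile flies form an essentially frozen environment: a site is "blocked" (in state $-1$, hence forbidding births of $1$'s into it) with probability close to its stationary fraction $\theta\alpha/(\alpha+\theta\alpha)=\theta/(1+\theta)$, and these events are, at a fixed time, nearly independent across sites. Since $\theta/(1+\theta)>p_c^{site}(\mathbb{Z}^2)$, the complement---the set of "good" (non-blocked) sites---is, with high probability, percolating in the subcritical-obstacle sense: it contains a unique infinite cluster and, more usefully, in any large $N\times N$ box it contains left-right and top-bottom open crossings with probability tending to $1$ as $N\to\infty$ (this is the "sponge crossing" technology of Kesten, valid because we are in $d=2$). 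The contact process restricted to such a crossing of good sites looks like a one-dimensional contact process along a path in $\mathbb{Z}^2$; since $\lambda>\lambda_{con}(\mathbb{Z})$, that one-dimensional contact process is supercritical, so $1$'s can propagate along the good crossings and survive there for a long time.

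**Key steps, in order.** First, fix a large integer $N$ and consider, at a reference time, the configuration of $-1$'s in an $N\times N$ box; couple it with i.i.d.\ site percolation with density $\theta/(1+\theta)$ for the open (good) sites, controlling the coupling error by choosing $\alpha$ small (the $-1$'s evolve slowly, so over a window of length $T=T(N)$ the environment barely moves, and the stationary one-site marginals are $\theta/(1+\theta)$ with exponentially small correlations for disjoint sites in the small-$\alpha$ limit). Second, invoke Kesten-type results: with probability $\ge 1-\ep/2$ the good sites in the box contain the requisite crossings, a "left-to-right" open path $\gamma$ connecting the left and right thirds of the box, and similarly a top-to-bottom path. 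Third, on the event that such crossings exist and the environment is still essentially frozen for time $T$, show that a contact process started from $1$'s on $I_m$ (in state $H$) will, with probability $\ge 1-\ep/2$, transport $1$'s along $\gamma$ across the box and deposit at least $K$ ones on both $I_{m-1}$ and $I_{m+1}$ at time $(n+1)T$: this is a one-dimensional-contact-process survival/spreading estimate ($\lambda>\lambda_{con}(\mathbb{Z})$ gives linear growth and survival, via e.g.\ the standard comparison with oriented percolation in $d=1$), transferred to the path $\gamma$ after noting that a path in $\mathbb{Z}^2$ is topologically a copy of $\mathbb{Z}$ along which the contact process dynamics dominate a $1$-dimensional contact process with the same $\lambda$ (births only help when more neighbors are available). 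Fourth, define the happy property $H$ (at least $K$ ones in $I_m$) and the good event $G_{m,n}$ as the conjunction of "environment frozen on $B_{m,n}$," "good crossings present in the spatial box," and "contact-process transport succeeds"; check that $G_{m,n}$ depends only on the Poisson points in $B_{m,n}$ and hence has finite range of dependence, then cite Theorems~4.1 and~4.2 of \cite{StFlour} to conclude survival and a nontrivial stationary distribution.

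**The main obstacle.** The delicate point is making the coupling between the slowly-evolving $-1$ environment and static site percolation quantitative and uniform over the block: we need the environment to be both (i) close in distribution to product measure with density $\theta/(1+\theta)$ at the initial time of the block and (ii) essentially unchanged over the whole interval $[nT,(n+1)T]$, while at the same time $T$ must be large enough (depending on $N$, which in turn depends on how far above $p_c^{site}$ we are and on $\ep$) for the one-dimensional contact process to cross a box of side $N$. Thus we need $\alpha\to 0$ after $N$ and $T$ have been fixed---i.e., a triple limit $\ep\to 0$, then $N,T\to\infty$, then $\alpha\to 0$---and some care is required to see that the $-1$'s which are "born" during $[nT,(n+1)T]$ do not spoil the crossing: here one uses that the expected number of new $-1$'s in $B_{m,n}$ is of order $\alpha\cdot|B_{m,n}|\cdot T$, which can be made small by shrinking $\alpha$, and that even if a few appear, the crossing path $\gamma$ has enough "slack" (one can ask for several disjoint crossings, or a crossing in a slightly sub-box) to re-route around $o(1)$-density perturbations. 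A secondary technical nuisance is the boundary effect: the contact process on $\gamma\subset B_{m,n}$ is not literally a one-dimensional contact process on all of $\mathbb{Z}$ but a finite piece with boundary, so one must use the standard "edge-speed" lemma for the supercritical $1$d contact process to guarantee the process does not die out before crossing, and then restart the argument on the next block---exactly the kind of bookkeeping the block-construction framework of \cite{StFlour} is designed to absorb.
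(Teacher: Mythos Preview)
Your proposal captures the correct high-level ideas---sponge crossings of non-blocked sites, then the supercritical one-dimensional contact process run along those crossings---but there is a genuine gap in the finite-range-of-dependence claim, and this is precisely the point where the paper's argument departs from the generic block construction of Section~\ref{sec:generic}.

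You take the triple limit ``$\ep\to0$, then $N,T\to\infty$, then $\alpha\to0$,'' so $T$ is fixed before $\alpha$ shrinks. In that regime the $-1$ environment is essentially frozen not just over one block interval $[nT,(n+1)T]$ but over \emph{arbitrarily many} levels of the block construction, since a $-1$ persists for time of order $1/(\theta\alpha)\gg T$. Consequently the event ``good crossings present in the spatial box at $m$'' is (to leading order) the \emph{same} event for every $n$: it is determined by the initial $-1$ configuration in that spatial region, not by Poisson marks inside $B_{m,n}$. Your assertion that $G_{m,n}$ depends only on the Poisson points in $B_{m,n}$ is therefore false, and the dependence range in the time direction blows up as $\alpha\to0$. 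Theorems~4.1--4.2 of \cite{StFlour} do not apply.

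The paper handles this by two linked modifications. First, it takes $T=\ep_0/\alpha$ (so $T\to\infty$ with $\alpha\to0$) rather than fixing $T$; second, it replaces the generic lattice ${\cal L}$ by the north-east lattice ${\cal L}^{NE}=\{(m,n,m+n):m,n\ge0\}$ with edges $(m,n)\to(m+1,n)$ and $(m,n)\to(m,n+1)$. Because every step moves to a fresh spatial box, the persistent memory of the $-1$ configuration does not create long-range temporal dependence between block events, and the range stays bounded independently of $\ep_0$. The paper also fixes the initial $-1$ configuration to be product measure with $P(\zeta_0^{pr}=-1)=1-p_0$ for some $p_0\in(p_c^{site},\theta/(1+\theta))$ and checks that the non-$-1$ density only increases from there, so openness at every level is controlled by the same percolation estimate (Corollary~\ref{sweb}).

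A minor slip: the stationary fraction of blocked sites is $\alpha/(\alpha+\theta\alpha)=1/(1+\theta)$, not $\theta/(1+\theta)$; the latter is the density of \emph{good} sites, which is indeed what must exceed $p_c^{site}$.
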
 

There are four main ingredients in the proof.

\begin{figure}[ht]
\begin{center}
\begin{picture}(240,240)
\put(30,30){\line(1,0){180}}
\put(30,85){\line(1,0){180}}
\put(30,95){\line(1,0){180}}
\put(30,145){\line(1,0){180}}
\put(30,155){\line(1,0){180}}
\put(30,210){\line(1,0){180}}
\put(30,30){\line(0,1){180}}
\put(85,30){\line(0,1){180}}
\put(95,30){\line(0,1){180}}
\put(145,30){\line(0,1){180}}
\put(155,30){\line(0,1){180}}
\put(210,30){\line(0,1){180}}
\put(85,218){$Q_1$}
\put(145,218){$Q_3$}
\put(215,147){$Q_2$}
\put(215,87){$Q_4$}
\put(20,15){$-N$}
\put(75,15){$-N/3$}
\put(140,15){$N/3$}
\put(205,15){$N$}
\end{picture}
\caption{Picture of the rectangles in which crossings will occur}
\label{2dbox}
\end{center}
\end{figure}
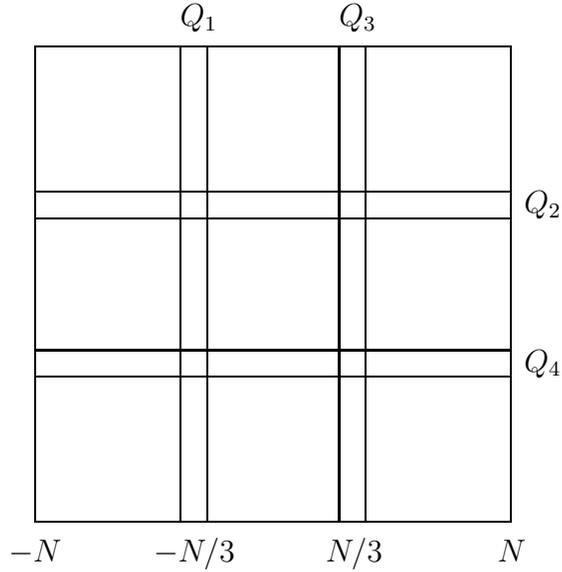

\begin{itemize}
\item
Let $T=\ep_0/\alpha$ for some $\ep_0$ to be chosen later in Section \ref{sec:spweb}. We say that a site $x\in\ZZ^2$ is \textit{closed} if it is in state $-1$ at some time in $[0,2T]$, and \textit{open} otherwise. Consider a large square $[-N,N]^2$. Let $\eta>0$ be small and define four rectangles by
\begin{align}
Q_1 & = [-N/3-N^\eta,-N/3+N^\eta] \times [-N,N],
\nonumber\\
Q_2 & = [-N,N] \times [N/3-N^\eta,N/3+N^\eta], 
\label{fourQ}\\
Q_3 & = [N/3-N^\eta,N/3+N^\eta] \times [-N,N], 
\nonumber\\
Q_4 & = [-N,N] \times [-N/3-N^\eta,-N/3+N^\eta]. 
\nonumber
\end{align}
We use the word \textit{open crossings} to refer to top-to-bottom crossings of $Q_1$ and $Q_3$ and  left-to-right crossings of $Q_2$ and $Q_4$ by paths of open sites. In Section \ref{sec:spweb} we use results about site percolation in two dimensions to show that if sites are independent and open with probability $p>p_c^{site}(\ZZ^2)$ and $N$ is large then with high probability there are open crossings in all $Q_1, Q_2, Q_3, Q_4$.

\item
Our next step is to introduce the block construction. 
Writing NE for north-east we define
$$
{\cal L}^{NE}=\{ (m,n,m+n) \in \ZZ^3:  m, n\geq 0\}
$$
 and draw an oriented edges from $(m,n,m+n) \to(m+1,n,m+n+1)$ and from
 $(m,n,m+n) \to(m,n+1,m+n+1)$. A site $(m,n,m+n)\in\mathcal{L}^{NE}$ has an associated box 
$$
B_{m,n}=((mN,nN)+[-N,N]^2) \times [(m+n-1)_+T, (m+n+1)T].
$$
The notation $(\cdot)_+$ means $\max\{0, \cdot\}$, and it only has an effect when $m=n=0$.

The somewhat unusual geometry of ${\cal L}^{NE}$ is forced on us by the fact that the influence of the initial configuration of $-1$'s persists for time $1/\alpha$ in expectation which is $1/\ep_0$ levels in the construction. Hence, if we pick $\ep_0$ small then the range of dependence in the block construction is large. By moving north or east on each step we can guarantee a finite range of dependence independent of $\ep_0$.

\begin{figure}[ht]
\begin{center}
\begin{picture}(260,170)
\put(30,30){\line(1,0){100}}
\put(30,60){\line(1,0){150}}
\put(80,90){\line(1,0){150}}
\put(130,120){\line(1,0){100}}
\put(180,150){\line(1,0){50}}
%\put(130,60){\line(1,0){50}}
%\put(180,90){\line(1,0){50}}
%
\put(30,30){\line(0,1){30}}
\put(80,30){\line(0,1){60}}
\put(130,30){\line(0,1){90}}
\put(180,60){\line(0,1){90}}
\put(230,90){\line(0,1){60}}
\put(45,65){$B_{0,0}$}
\put(95,95){$B_{1,0}$}
\put(145,125){$B_{2,0}$}
\put(195,155){$B_{3,0}$}
\put(45,20){wet}
\put(95,20){open}
\put(97,50){wet}
\put(145,50){open}
\put(147,80){wet}
\put(195,80){open}
\put(197,110){wet}											
\end{picture}
\caption{Block construction viewed from the $x$-axis.}
\label{fig:th3block}
\end{center}
\end{figure}
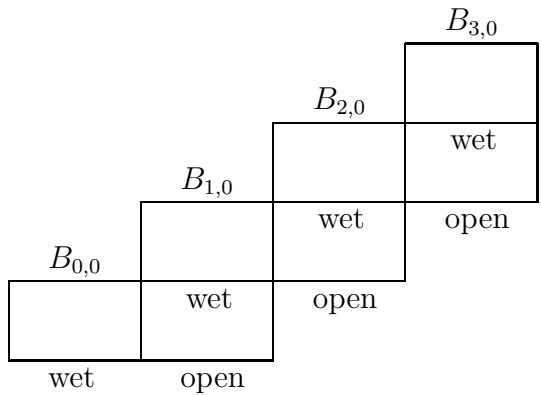

\item
We say that $(m,n,m+n) \in {\cal L}^{NE}$ with $m+n>0$ is {\it open} if there are open crossings of $(mN,nN) +Q_i$ for all $1\leq i\leq 4$ that persist through $[(m+n-1)_+T, (m+n+1)T]$. The block $B_{0,0}$ is excluded here, see Figure \ref{fig:th3block}. An open crossing of $Q_i$ is a self-avoiding path and hence isomorphic to an interval $[0,\ell]$ on $\ZZ$ where $2N \le \ell \le 4N^{1+\eta}$. By the definition of openness for a site there are no $-1$'s on the crossing during $[(m+n-1)_+T,(m+n+1)T]$. 

We say an open crossing of length $\ell\geq 2N$ is $\ep$-good if when we map the configuration of 1's and 0's to the interval $[0,\ell]$ on $\ZZ$ and put 0's outside $[\ep N, \ell-\ep N]$, the contact process survives with probability at least $1-\ep$. We say that $(m,n,m+n) \in {\cal L}^{NE}$ with $m+n\ge 0$ is {\it wet} if, in addition to $(m,n,m+n)$ being open, the open crossings in $Q_2$ and $Q_3$ are $\ep$-good at time $(m+n)T$. In Lemma \ref{transmission} in Section \ref{sec:renorm} we will show that if $(m,n,m+n)$ is wet and $(m+1,n, m+n+1)$ is open then with high probability $(m+1,n, m+n+1)$ will become wet. By symmetry the same conclusion holds with $(m+1,n, m+n+1)$ replaced by $(m,n+1, m+n+1)$. 

\item
The final detail is to specify the initial condition. Let $p_0 \in (p_c^{site}(\ZZ^2),\theta/(\theta+1))$ and let $\zeta^{pr}_0(x)$ be independent with $P(\zeta^{pr}_0(x)=0)=p_0$ and $P(\zeta^{pr}_0(x)=-1)=1-p_0$. If there are no 1's in the initial configuration then the process is doomed to die out, so we flip coins with a small probability of heads to replace a positive fraction of the 0's in $B_{0,0}$ by 1's. 

When $m+n=1$ the condition at time $(m+n-1)T=0$ is a product measure so Corollary \ref{sweb} in Section \ref{sec:spweb} shows that with high probability these sites are open. To check that $(m,n,m+n)\in {\cal L}^{NE}$ is open for $m+n>1$ we note that if the initial condition $\zeta^{pr}_t(x)$ is 0 with probability $p_0$ and $-1$ with probability $1-p_0$,  then the fraction of 0's at time $t/\alpha$ satisfies
$$
f'(t) = - f'(t) + \theta (1-f(t)), \quad f(0)=p_0.
$$
It is easy to see that as $t$ increases from 0 to $\infty$,
$f(t)$ increases from $p_0$ to $\theta/(\theta+1)$. If we introduce 1's into the initial configuration $\zeta^{pr}_0$ to produce $\zeta_0$ then $\zeta_0 \ge \zeta^{pr}_0$ and we have $\zeta_t \ge \zeta^{pr}_t$ for all $t\ge 0$ and then $(m,n,m+n)$ is open with high probability. 
%For the open sites in ${\cal L}^{NE}$, Lemma \ref{transmission} in Section \ref{sec:renorm} shows they are also wet with high probability.

\end{itemize}

Combining the observations above and using the block construction we conclude that if $\alpha$ is small then with positive probability we have 
$$\Omega_\infty=\{ \text{there is an infinite sequence of wet sites in the oriented percolation on ${\cal L}^{NE}$}\}.$$
Since boxes corresponding to wet sites have at least one occupied site, it follows that when $\Omega_\infty$ occurs our process does not die out.  

\clearp

\subsection{Extinction for small $\alpha$ in $d=1$} \label{sec:introth4}

In the other direction one might hope to prove the following

\mn{\bf Naive Conjecture.} {\it If $\theta/(1+\theta)<p_c^{site}(\ZZ^2)$ then for any fixed $\lambda$ the contact process will die out for small $\alpha$. }

\mn
If the random environment was static this would be easy. The contact process evolving on a finite cluster will quickly die out. However, the flipping of sites from $-1$ to 0 will allow the process to move between different finite clusters.

The next result shows that this guess is correct in $d=1$ where $p_c^{site}(\ZZ)=1$. 

\begin{theorem} \label{d1die}
In $d=1$ if $\lambda,\theta$ are fixed then the process dies out almost surely for small $\alpha$.
\end{theorem}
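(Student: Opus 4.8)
The plan is to show that when $\alpha$ is small the set of $1$'s is stochastically dominated by a subcritical oriented percolation, so that $P(\xi^1_t(x)=1)\to 0$ starting from all $1$'s; by attractiveness and Theorem \ref{dieatcrit} this is equivalent to $\lambda\le\lambda_c(\alpha,\theta)$, i.e.\ the process dies out. The guiding picture is that in $d=1$ any positive density of $-1$'s breaks $\ZZ$ into finite pieces, and on a bounded interval the contact process with birth rate $\lambda$ becomes extinct in a time $\tau(\ell,\lambda)<\infty$ depending only on $\lambda$ and the length $\ell$. When $\alpha$ is small a site, once it becomes $-1$, stays there for a time of order $1/(\theta\alpha)$, which is huge compared with $\tau(\ell,\lambda)$ for bounded $\ell$. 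So once $-1$'s have cut the line into bounded pieces, the $1$'s confined to a piece die long before a boundary $-1$ flips back to $0$ and lets the infection hop to a neighbour; the ``piece process'' tracking which pieces still carry $1$'s then has jump rate $O(\alpha)$ and death rate $O(1)$, hence is subcritical and dies out. (Heuristically the $1$'s spread for a time of order $1/\alpha$, reaching distance of order $1/\alpha$, by which point the environment they occupy has aged enough to be cut into bounded pieces, and the infection collapses.)

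The one quantitative input is a lower bound on the density of $-1$'s. Balancing arrivals and departures at a site, in any stationary regime, gives $\rho_{-1}=\rho_0/\theta$ (from the $0\leftrightarrow -1$ exchange) and $\rho_1\le(2\lambda+\alpha)\rho_0$ (arrivals to state $0$ include the rate-$1$ transitions from state $1$, while the departure rate from $0$ is at most $2\lambda+\alpha$ in $d=1$); since $\rho_0+\rho_1+\rho_{-1}=1$ this forces $\rho_0\ge(2\lambda+\alpha+1+\theta^{-1})^{-1}$ and hence $\rho_{-1}\ge\delta(\lambda,\theta)>0$. For the block construction one needs the finite-time analogue: over a window of length $\ep_0/\alpha$ with $\ep_0$ large, each site spends at least a fixed fraction of the window in state $-1$. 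This follows from the same bookkeeping — a $1$-episode has mean length $1$ and is followed by a $0$-episode of mean length $\ge(2\lambda+\alpha)^{-1}$, during which the site turns $-1$ at rate $\alpha$, and each resulting $-1$-episode has mean length $1/(\theta\alpha)$ — together with a concentration estimate, which is where some care is needed.

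Then I would run the block construction. Fix $T=\ep_0/\alpha$ with $\ep_0$ a large constant (depending on $\lambda,\theta$) and $I_m=2mL+[-L,L]$ with $L$ a large enough multiple of $1/\alpha$ that, since the infection spreads at bounded speed, $1$'s present in $I_m$ at time $nT$ cannot reach $I_k$ for $|k-m|\ge 2$ by time $(n+1)T$. Call $(m,n)$ \emph{wet} if there is a $1$ in $I_m$ at time $nT$. The main estimate is: given $\epsilon>0$, for all small $\alpha$ and any $(m,n)$, conditionally on $(m,n)$ being wet the probability that some descendant of the $1$'s in $I_m$ is still alive at time $(n+1)T$ is at most $\epsilon$. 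To prove it, take the worst case that $I_m$ is entirely occupied by $1$'s and there are no $-1$'s anywhere at time $nT$ (extra $-1$'s only help and can be discarded; this also makes the event depend only on the graphical representation in a space-time box of bounded range on the renormalized lattice, removing the apparent long-range-in-time dependence coming from the persistence of $-1$'s). By the density estimate, with probability $\ge 1-\epsilon$ the $-1$'s that appear during the block cut $I_m$ into pieces of bounded length; on each piece the contact process is extinct well before $(n+1)T$, and since the hop rate across a $-1$ is $O(\alpha)$ the associated piece process is subcritical, so with high probability the infection does not escape $I_m$ before dying out. Since a $1$ cannot appear in $I_m$ at time $(n+1)T$ unless a parent was wet, for $\alpha$ small the wet sites lie in the cluster of a subcritical oriented percolation, which gives $P(\xi^1_t(x)=1)\to 0$ and hence the theorem.

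The hard part is the cut-into-pieces estimate, precisely because the $-1$'s and the $1$'s are coupled: exactly where the contact process is vigorous, sites are occupied most of the time and rarely get a chance to become $-1$, so one cannot simply say that $-1$'s appear with density $\ge\delta$ throughout the block. One has to combine: (i) the fact that every site is empty at least a fixed fraction of the time, so that along any band of consecutive sites the $-1$-indicators can be bounded below by a finitely dependent family; (ii) ruling out the brief time-windows in which an entire wide band is momentarily free of $-1$'s — windows too short for the infection to cross the band; (iii) a bootstrap over the block handling the possibly wide active region near the spreading front, where the $-1$ density only builds up to the disconnecting level after the region has aged for a time of order $1/\alpha$; and (iv) the uniform subcriticality of the piece process. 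This is the one place where the easy static-environment intuition ($d=1$, $p_c^{site}=1$ implies trivial disconnection) must be upgraded to show that the moving environment does not rearrange itself fast enough to rescue a confined infection, and it is presumably what forces the modification of the block technique mentioned in the introduction.
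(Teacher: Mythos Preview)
Your outline diverges from the paper in two ways and leaves unsolved the step you yourself flag as hard. The paper runs the block construction in the opposite direction: a block $B_{m,n}=(mK,nT)+[-2K,2K]\times[0,3T]$ with $T=(t_0+\beta)/\alpha$ is called \emph{good} if, starting from all $1$'s, there are no $1$'s in $(mK+[-K,K])\times[(n+1)T,(n+3)T]$; good sites dominate a \emph{supercritical} oriented percolation whose open paths bound a linearly growing dead zone. Here $K$ is a fixed constant, not of order $1/\alpha$. Your choice $L\gtrsim vT\sim 1/\alpha$ forces the maximal gap between $-1$'s inside a block to be of order $\log(1/\alpha)$, and on such an interval the supercritical contact process survives for time $(1/\alpha)^{c(\lambda)}$ with $c(\lambda)$ unbounded in $\lambda$; so your assertion ``on each piece the contact process is extinct well before $(n+1)T$'' can fail for large $\lambda$, and the piece-process hop rate $O(\alpha)$ does not rescue it because the piece death rate is then also polynomially small in $\alpha$.

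The decoupling of $-1$'s from $1$'s that you identify as the crux has a one-line resolution you do not mention: replace the birth rate $\lambda N_1\le 2\lambda$ by the constant $2\lambda$ to obtain a single-site chain $\hat\xi_t(x)$ with $P(\xi_t(x)=-1)\ge P(\hat\xi_t(x)=-1)$, \emph{independently across $x$}. One then reads off $P(\hat\xi_{t_0/\alpha}(x)=-1)\ge\pi(-1)/2$ from the explicit equilibrium, so the $-1$'s that persist through $[t_0/\alpha,T]$ dominate an i.i.d.\ Bernoulli field of fixed density $\nu>0$; no concentration or bootstrap (your (i)--(iv)) is needed. With $K$ fixed the maximal gap in $[-2K,2K]$ is $O(\log K)$, the $1$'s in each gap die long before time $T$, and the paper's Phase~3 places one $-1$ in each of $[K,2K]$ and $[-2K,-K]$ that persists through $[T,3T]$, sealing $[-K,K]$ against reinvasion from outside. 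An ``if at first you don't succeed, try again'' restart then upgrades positive probability of a dead zone to almost-sure extinction.
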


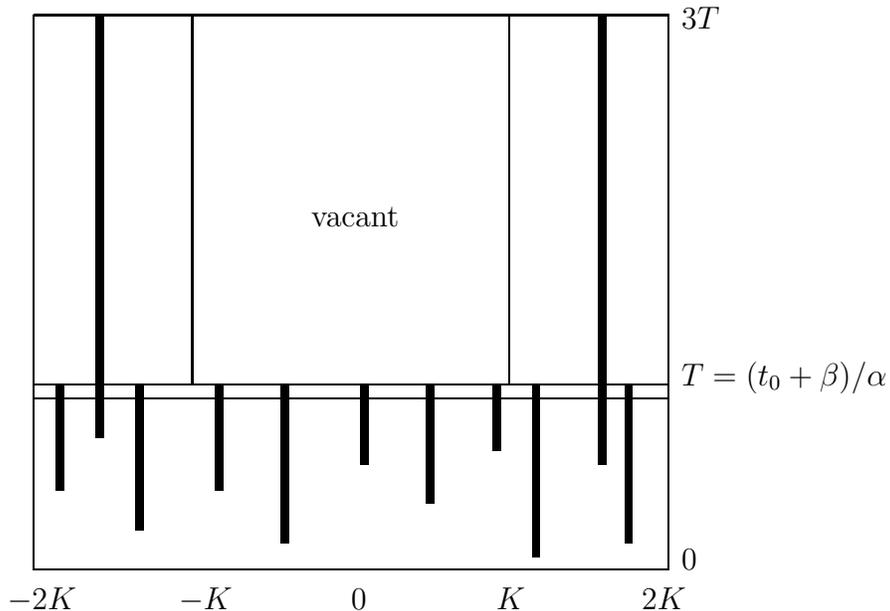
\begin{figure}[ht]
\begin{center}
\begin{picture}(300,260)
\put(30,30){\line(1,0){240}}
\put(30,100){\line(1,0){240}}
\put(30,95){\line(1,0){240}}
\put(30,240){\line(1,0){240}}
\put(30,30){\line(0,1){210}}
\put(270,30){\line(0,1){210}}
\put(90,100){\line(0,1){140}}
\put(210,100){\line(0,1){140}}
\put(20,15){$-2K$}
\put(85,15){$-K$}
\put(150,15){0}
\put(205,15){$K$}
\put(260,15){$2K$}
\put(275,30){0}
\put(275,100){$T = (t_0+\beta)/\alpha$}
\put(275,235){$3T$}
\put(135,160){vacant}
\linethickness{1mm}
\put(40,60){\line(0,1){40}}
\put(55,80){\line(0,1){160}}
\put(70,45){\line(0,1){55}}
\put(100,60){\line(0,1){40}}
\put(125,40){\line(0,1){60}}
\put(155,70){\line(0,1){30}}
\put(180,55){\line(0,1){45}}
\put(205,75){\line(0,1){25}}
\put(220,35){\line(0,1){65}}
\put(245,70){\line(0,1){170}}
\put(255,40){\line(0,1){60}}
\end{picture}
\caption{Picture of the block construction in $d=1$. Dark lines are locations of $-1$ barriers.}
\label{fig:1ddie}
\end{center}
\end{figure}

\noindent
The idea behind the proof, which again is a block construction, can best be conveyed by drawing a picture, see Figure \ref{fig:1ddie}.
Since our process is attractive, we can \tcr{suppose that the system starts in the state of all 1's at time 0, since this is the worst case scenario if we want the process to die out.} There are three phases in time.

\begin{itemize}
 \item  
{\bf Phase 1:} $[0, t_0/\alpha]$. \ When an occupied site becomes vacant, there is a small chance of it changing to $-1$ before it gets occupied again. When this happens the site will stay in state $-1$ for time $\Omega(1/\theta\alpha)$. If $t_0$ is large enough, then at time $t_0/\alpha$ we have space divided into small intervals by $-1$'s. To make sure the space remains a collection of small intervals we employ only the $-1$'s that will not turn back to 0 during time interval $[t_0/\alpha, (t_0+\beta)/\alpha]$. 

 \item 
{\bf Phase 2:} $[t_0/\alpha, (t_0+\beta)/\alpha]$. \ Once $[-2K,2K]$ is broken into a large number of small pieces by $-1$'s, results for the contact process on a finite interval imply that all the 1's in $[-2K,2K]$ will die in this time interval.

\item 
{\bf Phase 3:} $[T, 3T]$. \ To kill off the process using a block construction we want to have $[-K,K]$ remain vacant during time $[T,3T]$ where $T =(t_0+\beta)/\alpha$. In $d=1$ it is enough to build two walls of $-1$'s to protect the middle region from being populated again. Using a comparison with oriented percolation, the vacant regions combine to make strips of width $2K$ in which there are no 1's. As Figure \ref{fig:overlap} shows the vacant regions associated with $(m,n)$ and $(m+1,n+1)$ overlap nicely. 
\end{itemize}

\begin{figure}[ht]
\begin{center}
\begin{picture}(180,220)
\put(30,30){\line(1,0){120}}
\put(30,150){\line(1,0){120}}
\put(30,30){\line(0,1){120}}
\put(150,30){\line(0,1){120}}
\put(90,70){\line(1,0){120}}
\put(90,190){\line(1,0){120}}
\put(90,70){\line(0,1){120}}
\put(210,70){\line(0,1){120}}
\put(-30,70){\line(1,0){120}}
\put(-30,70){\line(0,1){120}}
\put(-30,190){\line(1,0){120}}
\put(20,15){$-2K$}
\put(50,15){$-K$}
\put(90,15){0}
\put(115,15){$K$}
\put(140,15){$2K$}
\put(175,55){$3K$}
\put(205,55){$4K$}
\put(-15,55){$-3K$}
\put(-45,55){$-4K$}
\put(-45,65){$T$}
\put(-50,105){$2T$}
\put(-50,145){$3T$}
\put(-50,185){$4T$}
\linethickness{1mm}
\put(60,70){\line(1,0){60}}
\put(60,70){\line(0,1){80}}
\put(120,70){\line(0,1){120}}
\put(60,150){\line(1,0){60}}
\put(120,110){\line(1,0){60}}
\put(180,110){\line(0,1){80}}
\put(120,190){\line(1,0){60}}
\put(0,110){\line(1,0){60}}
\put(0,110){\line(0,1){80}}
\put(0,190){\line(1,0){60}}
\put(60,110){\line(0,1){80}}
\end{picture}
\caption{Picture of the overlap of the (0,0), (1,1) and $(-1,1)$ boxes. The rectangles with thick lines are vacant if these boxes are good. }
\label{fig:overlap}
\end{center}
\end{figure}
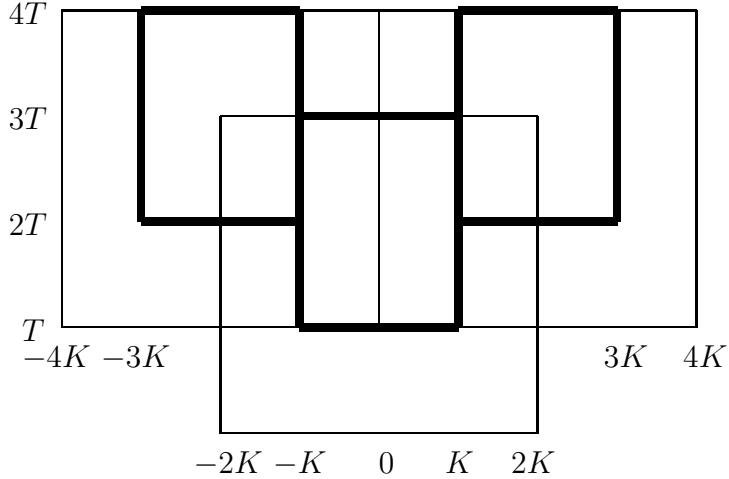

\mn
{\bf Block construction.} We use the generic renormalized lattice 
$$
\LL=\{ (mK,nT): m,n\in\ZZ, m+n \text{ is even}\}.
$$
A \textit{block} at $(m,n)\in \{ (i,j)\in \ZZ^2: i+j \text{ is even}\}$ i
$$
B_{m,n} = (mK,nT)+([-2K,2K] \times [0,3T]). 
$$
The block at (0,0) is \textit{good} if when we start our process $\xi_t$ with all 1's on $[-2K,2K]$ at time 0, there are no 1's in $[-K,K] \times [T,3T]$. This definition is extended to other blocks by translation. A site $(m,n)$ is said to be \textit{open} if the corresponding block is good, otherwise it is said to be \textit{closed}. The block events have a finite range of dependence $M$. We will show in Section \ref{sec:pfth3} that when $\alpha$ is sufficiently small, we can choose $K$ and $T$ so that a site $(m,n)$ is open with probability $>1-\ep_M$.

Define \textit{wet} sites on level $n$ by  $W^0_n=\{ y: (0,0) \to (y,n)\}$, where ``$\to$" means there is a path of open sites connecting the two sites. Let $\ell^0_n = \min W^0_n$ and $r^0_n = \max W^0_n$. Classical oriented percolation results show that (see e.g., Section 3 in \cite{RD84}) there is $v>0$ so that 
$$
\ell^0_n/n \to -v, \quad r^0_n/n \to v \quad \text{ a.s. on }\Omega^0\equiv \{ W^0_n\neq \emptyset \text{ for all }n\}.
$$
It is customary to use $\alpha$ for the ``edge speed.'' However $\alpha$ is one of the parameters of the model so instead we use v (for velocity).

\begin{figure}[ht]
\begin{center}
\begin{picture}(240,220)
\put(20,190){\line(1,0){200}}
\put(90,160){dead zone}
\linethickness{0.5mm}
\put(30,190){\line(0,-1){80}}
\put(50,190){\line(0,-1){100}}
\put(70,170){\line(0,-1){100}}
\put(90,110){\line(0,-1){60}}
\put(110,90){\line(0,-1){60}}
\put(130,130){\line(0,-1){100}}
\put(150,150){\line(0,-1){100}}
\put(170,170){\line(0,-1){100}}
\put(190,190){\line(0,-1){60}}
\put(210,190){\line(0,-1){40}}
\put(30,190){\line(1,0){20}}
\put(30,150){\line(1,0){20}}
\put(30,110){\line(1,0){20}}
\put(50,170){\line(1,0){20}}
\put(50,130){\line(1,0){20}}
\put(50,90){\line(1,0){20}}
\put(70,110){\line(1,0){20}}
\put(70,70){\line(1,0){20}}
\put(90,90){\line(1,0){20}}
\put(90,50){\line(1,0){20}}
\put(110,70){\line(1,0){20}}
\put(110,30){\line(1,0){20}}
\put(130,50){\line(1,0){20}}
\put(130,90){\line(1,0){20}}
\put(130,130){\line(1,0){20}}
\put(150,70){\line(1,0){20}}
\put(150,110){\line(1,0){20}}
\put(150,150){\line(1,0){20}}
\put(170,130){\line(1,0){20}}
\put(170,170){\line(1,0){20}}
\put(190,190){\line(1,0){20}}
\put(190,150){\line(1,0){20}}
\end{picture}
\caption{Picture of the walls and the resulting dead zone, which includes the rectangles that make up the walls. }
\label{fig:overlap}
\end{center}
\end{figure}
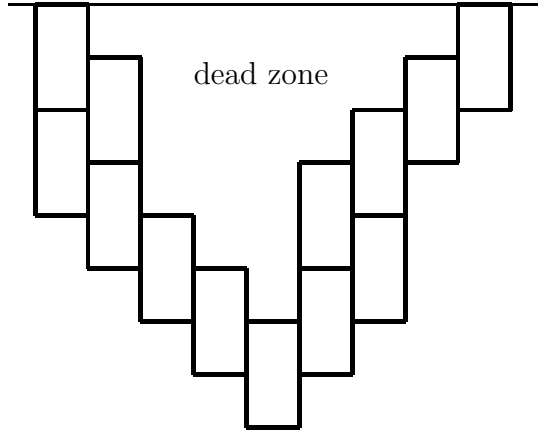

There is a path of open sites from $(0,0)$ to $(\ell^0_n,n)$ and to $(r^0_n,n)$. Due to our construction, these paths of open sites are associated with a
collection of $2K \times 2T$ rectangles that are known to be vacant. We call the union of these rectangles associated with one of the two paths a wall. There cannot be any 1's in between the two walls, since if there were its ancestors must have traveled through the vacant rectangles, which is impossible because the walls contain no 1's. The existence of this growing ``dead zone'' implies that our process $\xi_t$ dies out on the event $\Omega^0$, which by the block construction occurs with a positive probability $\rho$.

To show that the 1's die out we have to upgrade ``with positive probability" to ``with probability 1". To do this we will follow the motto ``If at first you don't succeed, then try again.'' Let $\tau = \min\{ n : W^0_n = \emptyset\}$. The sites on levels $\ge \tau+M+1$ are independent of the failure of the first attempt. So we try the construction again starting at site $m=0$ if the time $\tau+M+1$ is even or $m=1$ if the time is odd. The second attempt has an independent probability $\rho$ of success. Eventually we will have a success which kills off the 1's and the proof is complete.

\clearp

\subsection{Extinction for small $\alpha$ in $d=2$} \label{sec:introth5}

A major problem that prevents us from proving a converse to Theorem \ref{percsurv} is that when 1's are present the density of $-1$'s that we can guarantee is much smaller than $1/(1+\theta)$. which is the density we can guarantee when there are only $-1$'s and $0$'s. In two dimensions our model has the following rates

\begin{center}
\begin{tabular}{clccl}
$1 \to 0$ & at rate 1 & \qquad & $0\to -1$ & at rate $\alpha$  \\
$0 \to 1$ & at rate $\le 4\lambda$ & \qquad & $-1\to 0$ &  at rate $\theta\alpha$ 
\end{tabular}
\end{center}
 
Turning the ``$\le 4\lambda$" into  ``$=4\lambda$" we have a three state birth and death process in which the equilibrium frequencies, which satisfy detailed balance, are
 
\beq\label{pid=2}
\pi(1) = \frac{4\lambda\theta}{1+\theta+4\lambda\theta}, \quad
\pi(0) = \frac{\theta}{1+\theta+4\lambda\theta}, \quad
\pi(-1) = \frac{1}{1+\theta+4\lambda\theta}.
\eeq

Let $p_c^{site}(\ZZ^2)$ be the critical value for the site percolation in $\ZZ^2$ and let $\chi(p)$ denote the mean cluster size when the probability for a site to be open is $p$.

\begin{theorem} \label{d2die}
Let $p_0<p_c^{site}(\ZZ^2)$ be the constant such that 
$$
-4\chi(p_0)^2\log((1-e^{-1})e^{-4\lambda})=1.
$$
If $1-\pi(-1)<p_0$ then our process in $d=2$ dies out for small $\alpha$.
\end{theorem}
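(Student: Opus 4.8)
The plan is to prove extinction by a block construction, as in Theorem~\ref{d1die}, now using the two-dimensional percolation facts recalled in Section~\ref{sec:introth3}. By attractiveness it suffices to start from the worst case $\xi_0\equiv 1$, and it suffices to exhibit one block construction that succeeds with some positive probability $\rho$; the ``if at first you don't succeed, try again'' device from the proof of Theorem~\ref{d1die}, restarting the construction at a level high enough to use Poisson points independent of the earlier failure, then upgrades probability $\rho$ to probability $1$.

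The first ingredient is the comparison that controls the $-1$'s. Since a $0\to 1$ birth happens at rate $\lambda N_1\le 4\lambda$, a routine monotone coupling realizes on one graphical representation a process $\bar\xi_t$ in which each site independently runs the three-state chain with rates $0\to 1$ at rate $4\lambda$, $1\to 0$ at rate $1$, $0\to -1$ at rate $\alpha$, $-1\to 0$ at rate $\theta\alpha$, with $\xi_t\le\bar\xi_t$ whenever $\xi_0\le\bar\xi_0$. Because $-1$ is the bottom state, $\{\bar\xi_t=-1\}\subseteq\{\xi_t=-1\}$, so the available set $A_t:=\{x:\xi_t(x)\ne -1\}$ is contained in $\{x:\bar\xi_t(x)\ne -1\}$, which is a product field whose marginal converges to the law $\pi$ of \eqref{pid=2}. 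Hence, for $t\ge t_0/\alpha$ with $t_0$ large, the available sites of $\xi$ are dominated by independent site percolation of some density $p<p_0<p_c^{site}(\ZZ^2)$; the strict inequality $1-\pi(-1)<p_0$ is used here. One also notes that a $-1$ present at a given instant stays $-1$ for an $\mathrm{Exp}(\theta\alpha)$ time, so over a window of length $\beta/\alpha$ it persists with probability $e^{-\theta\beta}$; restricting to \emph{persistent} $-1$'s multiplies their density only by $e^{-O(\beta)}$, and for $\beta$ small the complementary density is still strictly below $p_c^{site}(\ZZ^2)$.

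Next I would set up the good event on a space-time block $B$ of spatial side of order $N$ and temporal extent $3T$ with $T=\beta/\alpha$, working as in Theorem~\ref{percsurv} on a tilted oriented lattice so that neighboring blocks depend on $-1$-histories that are nearly independent and the range of dependence $M$ stays finite. There are three phases. In \textbf{Phase~1} the environment relaxes, so $A_t$ becomes subcritical. In \textbf{Phase~2}, of length $\beta/\alpha$, the $1$'s die out: they are confined to the (subcritical) connected components of the complement of the persistent $-1$'s, and a component $C$ of size $k$ is entirely emptied during a unit time interval with probability at least $\big((1-e^{-1})e^{-4\lambda}\big)^{k}$ --- every site of $C$ dies and no birth arrow targets $C$, while the boundary of $C$ consists of persistent $-1$'s and so sends no births inward --- so over the $\Theta(\beta/\alpha)$ disjoint unit intervals available in the phase, and after a union bound over the polynomially-many components, all of them are emptied with probability near $1$ once $\alpha$ is small. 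It is at this union bound that the hypothesis $-4\chi(p_0)^2\log\big((1-e^{-1})e^{-4\lambda}\big)=1$, which sharpens $1-\pi(-1)<p_c^{site}(\ZZ^2)$ to $1-\pi(-1)<p_0$, forces the component sizes --- governed by $\chi(p_0)$ --- to be small enough that the emptying probability clears the threshold $1-\ep_M$. In \textbf{Phase~3}, $[T,3T]$, Kesten's sponge-crossing theory gives, with probability near $1$, a $*$-circuit of persistent $-1$'s separating the central sub-box of $B$ from $\partial B$; as $1$'s propagate only through $A_t$, this circuit keeps the central sub-box empty throughout $[T,3T]$. Declaring $B$ good when all of this occurs, good blocks that percolate in oriented percolation carve out a growing ``dead zone'' containing no $1$'s, as in Figure~\ref{fig:overlap} with one extra dimension, and on that event $\xi_t$ dies out. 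Fixing $\beta$ small, then $N$ large, then $\alpha$ small, and starting the construction at a level past which $\bar\xi_t$ is near stationary, makes the block-failure probability below $\ep_M$, so this event has positive probability, which the retry argument boosts to probability $1$.

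I expect the main obstacle to be \textbf{Phase~2}: one must empty \emph{every} available component of a large box full of $1$'s within a time window short compared with $1/\alpha$, so that the persistent $-1$'s have not yet flipped, using only the crude per-component bound $\big((1-e^{-1})e^{-4\lambda}\big)^{k}$, and keeping the union bound over components (in particular the largest component size) under control is exactly where the quantitative hypothesis --- rather than the bare subcriticality $1-\pi(-1)<p_c^{site}(\ZZ^2)$ --- is needed. A secondary difficulty, the one flagged just before the statement, is that with $1$'s present one can only guarantee the $-1$-density $\pi(-1)=1/(1+\theta+4\lambda\theta)$, smaller than the $1/(1+\theta)$ available in the subsystem of $0$'s and $-1$'s; this is why the comparison with the product process $\bar\xi_t$, and the strict inequality in the hypothesis, cannot be dispensed with.
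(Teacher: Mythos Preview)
Your Phases 1 and 2, and in particular your identification of where the quantitative hypothesis on $p_0$ (via the $\chi(p_0)^2$ bound on cluster sizes) enters the union bound, match the paper's argument. The gap is Phase 3. A $*$-circuit of $-1$'s persisting throughout $[T,3T]$ exists with high probability only if the density of such persistent $-1$'s exceeds $1-p_c^{site}(\ZZ^2)$; since each $-1$ survives an interval of length $2T$ with probability $e^{-2\theta\alpha T}$, you need $\alpha T$ bounded. You therefore take $T=\beta/\alpha$ with $\beta$ small and push the Phase-1 relaxation (which needs time $t_0/\alpha$ with $t_0$ large) \emph{outside} the block. That choice, however, breaks finite-range dependence: the $-1$ configuration of $\bar\xi$ at a block's starting time depends on the Poisson processes over the entire past, and the tilted lattice ${\cal L}^{NE}$ you borrow from Theorem~\ref{percsurv} resolves this only for \emph{survival} proofs, where blocks march monotonically into a quadrant. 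An extinction proof requires the block percolation eventually to cover all of $\ZZ^2$, so successive temporal levels must revisit the same spatial regions, and the long $-1$-memory then produces unbounded temporal dependence.

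The paper's solution is to abandon persistent $-1$'s entirely in Phase 3. It puts Phase 1 inside the block (so $T=(t_0+\beta)/\alpha$), takes the spatial scale $K$ comparable to $T$, and prevents reinvasion of $[-K,K]^2$ during $[T,3T]$ by a large-deviations bound on the speed of infection: the probability that some birth path from $\partial[-2K,2K]^2$ covers distance $K$ in time $2T$ is at most $16K\cdot 4^K\cdot P(S_K\le 2T)$, where $S_K$ is a sum of $K$ i.i.d.\ rate-$\lambda$ exponentials, and this decays like $(4/5)^K$ once $2T/K$ is below a fixed threshold. With this mechanism the good event depends only on the Poisson processes in the block $B_{m,n,k}$, so the comparison with $M$-dependent oriented percolation on ${\cal L}^3$ goes through. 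Finally, your one-line ``dead zone as in Figure~\ref{fig:overlap} with one extra dimension'' skips a genuine step: in two spatial dimensions the wet sites of the block percolation do not form walls, and the paper has to invoke a shape theorem together with Lemma~5.5 of \cite{CDP13} (controlling dry sites in the interior reachable from the exterior by ``downward'' paths) to conclude that the growing region is eventually free of 1's.
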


The proof again uses a block construction argument.  The ideas are similar to the proof of Theorem \ref{d1die} (compare with the previous bullet list) but some of the details are much different. 

\begin{itemize}
\item
{\bf Phase 1:} $[0,t_0/\alpha]$. If $t_0$ is large enough, $-1$'s produce giant component. To avoid having $-1$'s turn into 0's we employ only $-1$'s that will not turn back to 0 during time interval $[t_0/\alpha, (t_0+\beta)/\alpha]$. 

\item 
{\bf Phase 2:} $[t_0/\alpha, (t_0+\beta)/\alpha]$. If the giant component is sufficiently dense, results for the contact process on a finite set imply that all the 1's in $[-2K,2K]^2$ will die in this time interval.
 
\item
{\bf Phase 3:} $[T,3T]$. \ In $d=1$ it was sufficient to have a two long-lived $-1$'s to prevent the vacant region we create before time $T$ from being reinvaded during $[T,3T]$.  In $d=2$, to prevent reinvasion we make the vacant region that we create so large that it is very unlikely for individuals outside $[-2K,2K]^2$ to reach $[-K,K]^2$ in time $2T$.
\end{itemize}

\mn
{\bf Block construction.} Space is two-dimensional so the renormalized lattice is
$$
{\cal L}^3 = \{ (m,n,k) \in \ZZ^3 : m+n\hbox{ is even }, k \ge 0 \}
$$
A block at $(m,n,k)$ is
$$
B_{m,n,k} = (mK,nK,kT) + ( [-2K,2K]^2 \times [0,3T])
$$
The block at (0,0,0) is \textit{good} if we start with all 1's in $[-2K,2K]^2$ at time 0 then there are no 1's in the space-time box $[-K,K]^2 \times [T,3T]$. This definition is extended to other blocks by translation. A site $(m,n,k)$ is said to be \textit{open} if the corresponding block is good, otherwise it is said to be \textit{closed}.

Define \textit{wet} sites on level $n$ by  
$$
W^0_n=\{ (x,y) : (0,0,0) \to (x,y,n)\}
$$
where ``$\to$" means there is a path of open sites connecting the two sites. For comparison we define the process starting with all sites wet at time 0,
$$
\bar W_n=\{ (x,y): (x_0,y_0,0)\to (x,y,n) \text{ for some }(x_0,y_0)\in \ZZ^2\}.
$$

In $d=1$ the fact that the left edge of the wet region $\ell^0_n/n  \to -v$ and the right edge $r^0_n/n \to v$ creates  a linearly growing ``dead zone" that in combination with a block construction guarantees that the process dies out. 

To extend this result to $d=2$ we use Lemma 5.2 from \cite{CDP13} which holds for an $M$-dependent percolation with $\ep < \ep_M$.

\begin{theorem} [\textbf{Shape theorem}] There is a convex set $D$ so that on the event $\Omega_\infty = \{ W^0_n \neq \emptyset \hbox{ for all $n$} \}$, for any $\eta>0$,

\mn
(i) There exists some $n_0(\eta)\in \mathbb{N}$ so that $W^0_n \subseteq (1+\eta)n D$ for all $n \ge n_0(\eta)$.

\mn
(ii) On $W^0_n \cap (1-\eta)n D$ the process $W^0_n$ is equal to  $\bar W_n$  with high probability.
\end{theorem}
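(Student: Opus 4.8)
The plan is to deduce everything from the general theory of supercritical $M$-dependent oriented percolation, and concretely from Lemma 5.2 of \cite{CDP13}. The first step is to check that the block events $\{(m,n,k)\text{ open}\}$ from Section \ref{sec:introth5}, quantified in Section \ref{sec:pfth3}, really do define an $M$-dependent oriented percolation on ${\cal L}^3$ with $\ep<\ep_M$: the finite range of dependence $M$ comes from the fact that $B_{m,n,k}$ meets only those $B_{m',n',k'}$ whose indices differ from $(m,n,k)$ by a bounded amount, and $\ep$ can be made as small as we please by choosing $\alpha$ small. Once this is in place, Lemma 5.2 of \cite{CDP13} supplies a convex set $D$ together with the Hausdorff convergence of $n^{-1}W^0_n$ and $n^{-1}\bar W_n$ to $D$ on $\Omega_\infty$, and parts (i) and (ii) are read off from that. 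What follows is how one would reconstruct the two halves if the lemma were not available.

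For the outer bound (i), one builds $D$ by a subadditivity argument: for a unit vector $u\in\RR^2$ let $T(u,r)$ be the first level at which an open path started from $(0,0,0)$ reaches $\{z:z\cdot u\ge r\}$, defined on $\Omega_\infty$; subadditivity of $r\mapsto T(u,r)$ and the subadditive ergodic theorem for the $M$-dependent field yield a linear rate $g(u)$, and $D=\{z:z\cdot u\le g(u)\text{ for all }u\}$ is convex. Covering the unit circle of directions by finitely many $u_i$ and combining the rate in each direction with a one-step large-deviation bound on the maximal spatial displacement of an open path then gives, on $\Omega_\infty$, $W^0_n\subseteq(1+\eta)nD$ for all $n\ge n_0(\eta)$, with $n_0(\eta)$ deterministic on $\Omega_\infty$; this is (i). Note that this half uses only an upper bound on the speed of spreading and does not require the coupling with $\bar W_n$.

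For (ii), since $W^0_n\subseteq\bar W_n$ always, it is enough to show that with high probability every $(z,n)$ with $z\in(1-\eta)nD$ that lies in $\bar W_n$ also satisfies $(0,0,0)\to(z,n)$. The natural argument is a ``meet in the middle'' coupling. Running the process from the origin for $n/2$ levels, a regeneration (restart) argument — if the wet cluster from $(0,0,0)$ ever shrinks we wait a bounded number of levels for it to refill, and the probability of repeated failure decays exponentially — shows that on $\Omega_\infty$ the set $W^0_{n/2}$ contains $(1-\eta')(n/2)D$ with probability $1-e^{-cn}$, and there agrees with $\bar W_{n/2}$. Reversing the orientation of ${\cal L}^3$, the downward open-path cluster of $(z,n)$ is a forward cluster of a reflected $M$-dependent percolation, so conditionally on $(z,n)\in\bar W_n$ (which, up to an $e^{-cn}$ correction, is conditioning on survival for $n$ steps) its trace at level $n/2$ also contains a macroscopic region, and because $z\in(1-\eta)nD$ that region must intersect $(1-\eta')(n/2)D$. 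The overlap forces an open connection $(0,0,0)\to(z,n)$; summing the $e^{-cn}$ failure probabilities over the $O(n^2)$ candidate $z$ still tends to $0$, which is (ii).

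The main obstacle is this last step, and in particular the regeneration machinery needed to upgrade the ``positive probability'' statements produced by the raw block construction into the ``$1-e^{-cn}$'' statements required for the union bound over all $z$: one needs that the forward cluster of $(0,0,0)$, conditioned to survive, refills a box of fixed size within a bounded number of levels with overwhelming probability uniformly in the configuration, and the analogous statement for the reflected percolation run backwards from $(z,n)$. Matching the forward cone from the origin with the backward cone from $(z,n)$ — verifying that an interior point of $nD$ genuinely lies on a common intermediate level where both clusters are macroscopic — is exactly what the factor $(1-\eta)$ is there to provide, and making it quantitative, while delicate, is precisely what Lemma 5.2 of \cite{CDP13} does; so in the write-up the economical route is to verify that lemma's hypotheses and invoke it.
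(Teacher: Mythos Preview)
Your proposal is correct and matches the paper's approach exactly: the paper does not prove this shape theorem at all but simply quotes it as Lemma~5.2 of \cite{CDP13}, after noting that the block events have finite range of dependence and can be made to fail with arbitrarily small probability by taking $\alpha$ small. Your additional sketch of how the lemma itself is proved is a bonus the paper does not attempt; one small slip is that the relevant block estimate is Lemma~\ref{gblock} in Section~\ref{sec:pfth5}, not Section~\ref{sec:pfth3}.
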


\noindent
Here and throughout the paper ``with high probability" means with a probability that tends to 1 as $n\to\infty$.

As we will explain in Section \ref{sec:compop} arguments in Chapter 5 of \cite{CDP13} can be used to show that if $\ep$ is small enough then  the density of wet sites is large enough. Thus with high probability there will be no particles in the set $n(1-2\eta)D$ and we will have a  linearly growing dead zone. Once this is established the proof can be completed as in $d=1$.

\subsection{Organization of the paper}

The remainder of the paper is devoted to proofs. In Section 2 we describe the graphical representation of Remenik's model, the resulting duality, and prove Theorem 1. In Section 3 we prove Theorem 2 by using Liggett's version of the Bezuidenhout-Grimmett argument. In Sections 4, 5, and 6 we prove Theorems 3, 4, and 5. The block constructions have been described in the introduction so it only remains to show that the parameters can be chosen so that the block events occur with high probability.

\clearp 

\section{Graphical representation} \label{sec:grep}

The graphical representation allows us to construct our process from a collection of independent Poisson processes in a way that processes with different parameters can be coupled together through their graphical representations. It is a very useful tool in the study of interacting particle systems. For details the reader is referred to Section III.6 of \cite{Liggett99}. We state our construction as following:
\begin{itemize}
 \item For each ordered pair $(x,y)$ of nearest neighbors in $\ZZ^d$, let $\{T^{x,y}_n, n\ge 1\}$ be a Poisson process with rate $\lambda$. At each arrival we draw an arrow from $x$ to $y$ to indicate that $y$ will change to 1 if $x$ is in state 1 and $y$ is state 0. 
\item For every $x$, let $\{T^x_n, n \ge 1\}$ be a Poisson process with rate $1$. At each arrival we write a $\bullet_1$ to indicate that a 1 at the site will turn to 0.
\item For every $x$, let $\{T^{\alpha,x}_n, n \ge 1\}$ be a Poisson process with rate $\alpha$. At each arrival we place a $\bullet_{-1}$ symbol to indicate that the site will become $-1$ if the current state is $0$.
\item For every $x$, let $\{T^{\theta,x}_n,n \ge 1\}$ be a Poisson process with rate $\alpha\theta$. At each arrival we place a $\ast_{-1}$ symbol to indicate that if the state is $-1$ it will return to state 0. 
\end{itemize}

\noindent
The same graphical representation can be used to construct Remenik's process if we change the third rule to: $\bullet_{-1}$ indicates that the site will change to state $-1$ in spite of its current state. The contact process can be constructed on the same space by ignoring the last two collections of Poisson processes.

\subsection{Duality for Remenik's model}

\noindent
Recall that Remenik's process $\eta_t$ has the following transition rates
\begin{align*}
0\to 1 &\text{ at rate } \lambda N_1\\
1\to 0 &\text{ at rate }1\\
1,0\to -1&\text{ at rate } \alpha\\
-1\to 0&\text{ at rate }\theta\alpha
\end{align*}
On each interval between a $\bullet_{-1}$ and a $\ast_{-1}$, we know that the system is in state $-1$. At all other times the state is 1 or 0. To identify sites occupied by 1's we say there is an \textit{active path} up from  $(x,s)$ to $(y,t)$ if there is a path that only moves up, crosses arrows in the direction of their orientation and avoids $\bullet_1$'s and sites that have been set equal to $-1$. Let $A_t = \{ y: $ there is an active path up from $(x,0) \to (y,t)$  for some $x\in A_0\}$ where $A_0$ is the set of 1's at time 0. In the same way that we defined active paths up then we can define active paths down. They only move down, cross arrows and the direction opposite their orientation and avoids $\bullet_1$'s and sites that have been set equal to $-1$. Let $\mu_\rho$ be the product measure of 0's and $-1$'s, in which $-1$'s have probability $\rho=1/(1+\theta)$. Let $B_t = \{ x : \eta_t(x)=-1\}$ and set the initial distribution of $B_0$ to be $\mu_\rho$ in order to have a useful duality. Observe that $\mu_\rho$ is the equilibrium for $B_t$.

The dual process $(\hat \eta^t_s)_{0\leq s\leq t}=(\hat A^t_s, \hat B^t_s)_{0\leq s\leq t}$ is constructed using the same graphical representation we used for constructing $\eta_t$. To state the duality let $C \subseteq \ZZ^d$, and define the probability measure $\nu_C$ as following: $-1$'s are first chosen according to the equilibrium $\mu_\rho$, and then for every site in $C$ that is not $-1$ we set its state to be 1. The dual process started with $\nu_C$ will be denoted by $(\hat \eta^{\nu_C,t}_s)_{0\leq s\leq t}$.

Fix $t>0$ and we have the environment process $(B_s)_{0\leq s\leq t}$. The dual environment is given by $\hat B^t_s =B_{t-s}$ for $0\leq s\leq t$. 
Placing a 1 at time $t$ at  every site in $C\backslash \hat B^t_0$ gives the initial condition $\hat A^{\nu_C,t}_0$.
We define the dual process by 
$$
\hat A^{\nu_C,t}_s = \{ y: \text{there is an active path from $ (y,t-s)\to (x,t)$  for some $x \in \hat A^{\nu_C,t}_0$} \}
$$ 
Let $A, C, D$ be finite subsets of $\ZZ^d$. Proposition 2.2 in \cite{Remenik08} gives
\beq
P^{\nu_A}( A_t \cap C \neq \emptyset, B_t \cap D \neq \emptyset ) =  
P^{\nu_C}( \hat A^t_t \cap A \neq \emptyset, 
\hat B_0^t \cap D \neq \emptyset ),
\label{asdual}
\eeq
which is the natural generalization of the duality relationship of additive processes
$$
P( \xi^A_t \cap B \neq \emptyset) = P( \tilde \xi^B_t \cap A \neq \emptyset
$$
to processes where the state is described by two sets. In \eqref{asdual} the two sets are equal due to the construction on the graphical prespresentation. However, this immediately implies the following self-duality relation which does not depend on the construction: if $A$ or $C$ is finite, then
\beq
P^{\nu_A}( A_t \cap C \neq \emptyset, B_t \cap D \neq \emptyset ) =  
P^{\nu_C}( A_t \cap A \neq \emptyset, B_0 \cap D \neq \emptyset ).
\label{distdual}
\eeq

Let $\nu_\emptyset$ be the distribution corresponding to having the $-1$'s at equilibrium and no 1's. Let $\eta^1_\infty$ be the limiting distribution when we start the process $\eta_t$ from all 1's. Using duality Remenik was able to prove

\mn
{\bf Complete convergence theorem.} {\it  Denote by $\tau(\eta) = \inf\{ t: \{x: \eta_t(x)=1\} = \emptyset \}$ the extinction time of the process. Then for every initial distribution $\mu$
$$
\eta^{\mu}_t \Rightarrow P^\mu( \tau(\eta) < \infty ) \nu_\emptyset + P^\mu(\tau(\eta) = \infty) \eta^1_\infty.
$$}

\subsection{Properties for our model}

\mn
\textbf{Attractiveness.}
As stated in Section 1 our process $\xi_t$ is attractive in the sense that if $\xi_0\leq \bar{\xi}_0$ in terms of the partial order $-1\leq 0\leq 1$ then $\xi_t \leq \bar{\xi}_t$ for all $t\geq 0$.

\smallskip
\mn
\textbf{Positive correlations.}
We state a version of positive correlation for our process $\xi_t$. Let $\chi_{A}$ denote the probability measure that assigns mass 1 to the configuration $\xi$ with $\xi|_A\equiv1,\xi|_{A^c}\equiv -1$. Let $f,g$ be increasing real-valued functions depending on finitely many coordinates. Then 
\beq\label{posicor}
E^{\chi_A}fg\geq E^{\chi_A}f \cdot E^{\chi_A}g
\eeq

Since $f$ and $g$ depend on finitely many coordinates and every jump in our process is between states which are comparable in the partial order $-1\leq 0\leq 1$, \eqref{posicor} follows from a result of Harris (see Theorem II.2.14 in Liggett \cite{Lig85}).

\clearp

\section{Proof of Theorem \ref{dieatcrit}}

The proof of Theorem \ref{dieatcrit} follows from a classical block construction argument developed in \cite{BG90}, which is also covered in great detail in Section 2, Part I of Liggett's book \cite{Liggett99}. Here we present our proof closely following the same organization as that in \cite{Liggett99}. To avoid repetition, we will focus on the proofs that are different from \cite{Liggett99} while omitting the analogous ones.

\mn
\textbf{The boundary of a big box has many infected sites.}
The following lemma is an analogue of Proposition I.2.1 in \cite{Liggett99}. However, the proof is much different and hence is given here.
\begin{lemma}\label{bigbox}
Suppose under our choice of $\lambda,\theta$ and $\alpha$, the process $\xi_t$ survives. Then
$$\lim_{n\to\infty}P^{\chi_{[-n,n]^d}}(\xi_t\neq \emptyset\hh \forall t\geq 0)=1.$$
\end{lemma}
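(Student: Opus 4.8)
\noindent\emph{Plan of proof.} Unlike the contact--process proof of Proposition~I.2.1 in \cite{Liggett99}, which rests on self--duality and the additivity $\zeta^A_t=\bigcup_{x\in A}\zeta^{\{x\}}_t$, the argument here will use only attractiveness and a $0$--$1$ law. The survival hypothesis provides a finite box $\Lambda=[-n_0,n_0]^d$ with
$$
\rho_0:=P^{\chi_\Lambda}(\xi_t\neq\emptyset\ \text{for all }t)>0
$$
(if ``survives'' is read as survival of a single $1$ surrounded by $-1$'s, one takes $\Lambda=\{0\}$). Construct all the processes below on one graphical representation as in Section~\ref{sec:grep}, and write $\xi^{\chi_A}$ for the process started from $\chi_A$. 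For every $x\in\ZZ^d$ with $x+\Lambda\subseteq[-n,n]^d$ we have $\chi_{x+\Lambda}\le\chi_{[-n,n]^d}$ in the order $-1\le 0\le 1$: the two configurations both take the value $1$ on $x+\Lambda$, both take the value $-1$ off $[-n,n]^d$, and on $[-n,n]^d\setminus(x+\Lambda)$ one has $-1\le 1$. By attractiveness $\xi^{\chi_{x+\Lambda}}_t\le\xi^{\chi_{[-n,n]^d}}_t$ for all $t$, so at any site where $\xi^{\chi_{x+\Lambda}}_t$ equals $1$ the larger process also equals $1$; hence survival of $\xi^{\chi_{x+\Lambda}}$ implies survival of $\xi^{\chi_{[-n,n]^d}}$, and therefore
$$
P^{\chi_{[-n,n]^d}}(\xi_t\neq\emptyset\ \text{for all }t)\ \ge\ \P\Big(\bigcup_{x:\,x+\Lambda\subseteq[-n,n]^d}\{\xi^{\chi_{x+\Lambda}}_t\neq\emptyset\ \text{for all }t\}\Big).
$$

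\noindent As $n\to\infty$ the index set increases to all of $\ZZ^d$, so the right--hand side increases to $\P(E)$, where $E=\bigcup_{x\in\ZZ^d}\{\xi^{\chi_{x+\Lambda}}_t\neq\emptyset\ \text{for all }t\}$. Translating the graphical representation by a vector $v$ turns the process started from $\chi_{x+\Lambda}$ into the one started from $\chi_{(x+v)+\Lambda}$, so it merely permutes the boxes appearing in the union; thus $E$ is invariant under spatial translations. Since the graphical representation is an i.i.d.\ family of Poisson processes indexed by the sites and edges of $\ZZ^d$, it is ergodic under these translations, so $\P(E)\in\{0,1\}$; and $\P(E)\ge P^{\chi_\Lambda}(\xi_t\neq\emptyset\ \text{for all }t)=\rho_0>0$, so $\P(E)=1$. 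Letting $n\to\infty$ in the displayed inequality yields $\lim_{n\to\infty}P^{\chi_{[-n,n]^d}}(\xi_t\neq\emptyset\ \text{for all }t)=1$, which is the claim.

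\noindent\emph{Main point.} I do not expect a serious obstacle here: the proof is essentially the two displays above. The substantive step is the comparison $\chi_{x+\Lambda}\le\chi_{[-n,n]^d}$, which is where attractiveness does the work that additivity does for the contact process; because we lack additivity we must compare against translates of an entire block $\Lambda$ rather than against single sites (survival from $\chi_\Lambda$ does not reduce to survival from a point), consistent with the block--construction philosophy used throughout the paper. The only thing requiring a little care is verifying that $E$ is genuinely translation invariant so that ergodicity of the graphical representation applies, and — unless one simply takes it as the definition of survival — extracting from the hypothesis a finite block $\Lambda$ with $\rho_0>0$.
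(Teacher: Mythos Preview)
Your proof is correct and rests on the same two ingredients as the paper's: attractiveness, to dominate translates $\chi_{x+\Lambda}$ by $\chi_{[-n,n]^d}$, and ergodicity of the graphical representation under spatial shifts. The execution differs slightly. The paper takes $\Lambda=\{0\}$, sets $Y_x=\mathbf 1\{\xi^{\chi_x}\text{ survives}\}$, proves ergodicity of $T_{e_1}$ explicitly, and then invokes Birkhoff's ergodic theorem on the averages $\frac{1}{2n+1}\sum_{k=-n}^n Y_{ke_1}$ to conclude $P(\sum Y_x\ge 1)\to 1$. You instead apply the $0$--$1$ law directly to the translation-invariant union $E=\bigcup_x\{\xi^{\chi_{x+\Lambda}}\text{ survives}\}$, which is a bit more streamlined (no appeal to Birkhoff) and, by allowing a general finite block $\Lambda$, does not rely on the particular reading of ``survives'' as survival from a single $1$ in a sea of $-1$'s. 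Either way the heart of the matter is the same: once one translate survives, attractiveness pushes survival up to the big box, and ergodicity upgrades ``positive probability'' to ``probability one''.
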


\begin{proof}
For $x\in\ZZ^d$, define the shift transformation $T_x$ by
$$(T_x\eta)(y)=\eta(y-x)$$
where $\eta\in \{-1,0,1\}^{\ZZ^d}$. We start by showing $T_x$ is ergodic. Let $\mathcal{S}$ be the set of events depending only on the Poisson processes on a finite number of sites and edges. Set 
$$\mathcal{A}=\{ A\in\FF: \inf_{B\in\mathcal{S}}P(A\Delta B)=0\}$$
where $\FF$ is the $\sigma$-algebra generated by the Poisson processes on all sites and edges on $\ZZ^d$.

We can easily check that $\mathcal{A}$ is a $\sigma$-algebra. As $\mathcal{S}\subseteq \mathcal{A}$, it follows that 
$\FF=\sigma(\mathcal{S})\subseteq \mathcal{A}$. Hence $\mathcal{A}=\FF$. Let $A\in\FF$ be an event that is invariant under transformation $T_x$, i.e., $A=T_x^{-1}A$. Since $A\in\mathcal{A}$, for any $\ep>0$ there exists some $A_\ep\in\mathcal{S}$ such that $P(A\Delta A_{\ep})\leq \ep$. As $A_\ep$ depends on a finite number of sites and edges, there exists a positive number $M_\ep$ such that $A_\ep \in \FF_{M_\ep}=\sigma(T^{x,y},T^{x},T^{\alpha,x},T^{\theta,x}: x,y\in[-M_\ep,M_\ep]^d, x\sim y).$ For any $C,D,C',D'\subseteq \ZZ^d$, observe that $(C\cap D)\Delta(C'\cap D')\subseteq (C\Delta C')\cup (D\Delta D')$. Hence we have
\begin{align*}
|P(A)^2-P(A)|
&=|P(A)P((T^{-1}_x)^mA)-P(A\cap (T^{-1}_x)^mA)|\\
&\leq |P(A_\ep)P((T^{-1}_x )^mA_{\ep})-P(A_\ep\cap (T^{-1}_x)^mA_\ep)|+4\ep.
\end{align*}
When $m$ is sufficiently large, the first term on the right hand side is 0. Hence $P(A)^2=P(A)$, which implies $P(A)\in\{0,1\}$. Therefore for every $x\in\ZZ^d$, $T_x$ is an ergodic transformation.

Consider $T_{e_1}$ and let $Y_x$ be the indicator of the event $\{\xi^{\chi_x}_t\neq \emptyset \hh\forall t\geq 0\}$, where $x\in\ZZ^d$. Birkhoff's ergodic theorem gives 
$$\frac{1}{(2n+1)} \sum_{x\in[-n,n]\times \{0\}^{d-1}} Y_x=\frac{1}{(2n+1)}\sum_{k=-n}^n (T_{e_1})^k Y_0\to EY_0\quad \text{ a.s. as }n\to\infty.$$
It follows from attractiveness that
\begin{align*}
P^{\chi_{[-n,n]^d}}(\xi_t\neq \emptyset\hh \forall t\geq 0) &\geq P(\sum_{x\in[-n,n]\times \{0\}^{d-1}} Y_x \geq 1)\\
&= P\left(\frac{1}{(2n+1)}\sum_{x\in[-n,n]\times \{0\}^{d-1}} Y_x \geq \frac{1}{2n+1}\right)\to 1 \hh \text{ as }n\to\infty.
\end{align*}
\end{proof}

For $L\geq 1$, let ${}_L\xi_t$ be the truncated process of $\xi_t$ where no births are allowed outside of $(-L,L)^d$. The following Lemma is a straightforward adaptation of Proposition I.2.2 of Liggett \cite{Liggett99}. To be self-contained and succinct, we present a sketch of the proof here. The reader is referred to Proposition I.2.2 of Liggett \cite{Liggett99} for a complete argument.
\begin{lemma}\label{spacetime}
For every finite set $A$ and every $N\geq1$
$$\lim_{t\to\infty}\lim_{L\to\infty} P^{\chi_A}(|_L\xi_t|\geq N)=P^{\chi_A}(\xi_t\neq \emptyset \hh \forall t\geq 0).$$
\end{lemma}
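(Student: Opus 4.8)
The plan is to take the two limits in turn. Fix $t$. The inner limit $L\to\infty$ is routine: starting from the finite configuration $\chi_A$, the number of sites in state $1$ during $[0,t]$ is dominated by a pure–birth process, so a.s.\ only finitely many sites are ever occupied by a $1$ before time $t$; once $(-L,L)^d$ contains that (a.s.\ finite) set together with its outer boundary, every birth arrow suppressed in ${}_L\xi$ is also ineffective in $\xi$, hence ${}_L\xi_s=\xi_s$ for all $s\le t$. By bounded convergence $\lim_{L\to\infty}P^{\chi_A}(|{}_L\xi_t|\ge N)=P^{\chi_A}(|\xi_t|\ge N)$, so it remains to show $\lim_{t\to\infty}P^{\chi_A}(|\xi_t|\ge N)=P^{\chi_A}(\Omega_\infty)$, where $\Omega_\infty=\{\xi_s\neq\emptyset\ \forall s\ge0\}$. (Since $1$'s never reappear once extinct, $\{|\xi_t|\ge N\}\subseteq\{\xi_t\neq\emptyset\}\downarrow\Omega_\infty$, which already gives ``$\le$''; the real content is the reverse inequality.)

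The reverse inequality will follow once we know that $|\xi_t|\to\infty$ a.s.\ on $\Omega_\infty$. The one quantitative input is: there is $\delta_N=\delta_N(N,d,\lambda)>0$ such that from \emph{any} configuration having at most $N-1$ sites in state $1$, the process has no $1$'s left by time $1$ with probability at least $\delta_N$. Indeed, for each of the $\le N-1$ occupied sites consider the event that its first death mark occurs before time $1$ and before any of its outgoing birth arrows; for distinct occupied sites these events involve disjoint families of Poisson processes from the graphical representation, hence are independent, each has probability $\delta:=(1-e^{-(1+2d\lambda)})/(1+2d\lambda)>0$, and on their intersection every initially–occupied site turns to $0$ before ever effecting a birth, so no new $1$ is created and all $1$'s are gone by time $1$; take $\delta_N=\delta^{N-1}$. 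Consequently, since $\Omega_\infty\subseteq\{\xi_1\neq\emptyset\}$, the function $g(\eta):=P^\eta(\Omega_\infty)$ satisfies $g(\eta)\le 1-\delta_N$ for every configuration $\eta$ with at most $N-1$ ones.

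Now let $(\mathcal{F}_t)$ be the filtration generated by the graphical representation. Because $1$'s do not reappear once extinct, the Markov property gives $P(\Omega_\infty\mid\mathcal{F}_t)=g(\xi_t)$, so $P(\Omega_\infty\mid\mathcal{F}_t)\le 1-\delta_N$ on $\{|\xi_t|\le N-1\}$. L\'evy's $0$--$1$ law gives $P(\Omega_\infty\mid\mathcal{F}_t)\to\mathbf 1_{\Omega_\infty}$ a.s.\ as $t\to\infty$; hence on $\Omega_\infty$ the set of times $t$ with $|\xi_t|\le N-1$ is bounded, i.e.\ $|\xi_t|\ge N$ for all large $t$, while on $\Omega_\infty^c$ we have $\xi_t=\emptyset$ for all large $t$. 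Thus $\mathbf 1(|\xi_t|\ge N)\to\mathbf 1_{\Omega_\infty}$ a.s.\ under $P^{\chi_A}$, and bounded convergence yields $P^{\chi_A}(|\xi_t|\ge N)\to P^{\chi_A}(\Omega_\infty)$, which combined with the first paragraph is the claim.

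The only real obstacle is this last point — that survival forces $|\xi_t|\to\infty$. For the contact process Liggett derives the corresponding fact from additivity and self–duality, neither of which our process possesses; the substitute used above (small populations are killed off in bounded time with probability bounded away from $0$, fed into the L\'evy $0$--$1$ law for the graphical–representation filtration) needs nothing beyond the Poisson construction, and is precisely the modification of Proposition I.2.2 of \cite{Liggett99} that the absence of a dual requires.
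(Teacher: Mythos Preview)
Your proof is correct and follows essentially the same route as the paper's: reduce the inner limit to $P^{\chi_A}(|\xi_t|\ge N)$, then show $|\xi_t|\to\infty$ a.s.\ on survival by combining a uniform lower bound on the extinction probability from small populations with the martingale convergence (L\'evy $0$--$1$) theorem applied to $P(\Omega_\infty\mid\FF_t)$. The paper uses the bound $P(\hbox{extinction}\mid\FF_s)\ge \bigl(1/(1+2d\lambda|\xi_s|)\bigr)^{|\xi_s|}$ in place of your time-$1$ bound $\delta_N=\delta^{N-1}$, but the argument is otherwise identical. One remark: your closing paragraph mischaracterises Liggett's Proposition I.2.2 --- that proof already uses exactly this martingale argument rather than additivity or self-duality, so no modification was in fact needed.
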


\begin{proof}
Since $\lim_{L\to\infty} P^{\chi_A}(|_L\xi_t|\geq N)=P^{\chi_A}(|\xi_t|\geq N)$, it suffices to show 
$$\lim_{t\to\infty} |\xi_t|=\infty \hh \text{ a.s. on } \{ \xi_s \neq \emptyset \hh \forall s\geq 0\}.$$
Observe that
$$P(\xi_t=\emptyset \text{ for some }t|\FF_s)\geq \left(\frac{1}{1+2d\lambda|\xi_s|}\right)^{|\xi_s|}$$
since each individual has at most $2d$ vacant neighboring sites. 
By the martingale convergence theorem,
$$P(\xi_t=\emptyset \text{ for some }t|\FF_s)\to 1_{\{ \xi_t=\emptyset \text{ for some }t\}} \quad a.s.$$
as $s\to \infty$, which implies the desired result.
\end{proof}

Let 
$$S(L,T)=\{ (x,s)\in\ZZ^d\times [0,T] : \max_i|x_i|=L\}$$
and let $N(L,T)$ be the maximal number of points in a subset of $S(L,T)\cap {}_L\xi$ with the property that any two points $(x,s_1)$ and $(x,s_2)$ in this set satisfies $|s_1-s_2|\geq 1$. The following lemma relates the quantity $N(L,T)$ to $|_L\xi_T|$ using the positive correlations property of our process. The proof of Lemma \ref{relation} is essentially the same as that of Proposition I.2.8 in \cite{Liggett99} and hence is omitted here.
\begin{lemma}\label{relation}
Suppose $L_j\uparrow \infty$ and $T_j\uparrow \infty$. For any $M,N$ and any finite $A\subset \ZZ^d$,
$$\limsup_{j\to\infty} P^{\chi_A}(N(L_j,T_j)\leq M)P^{\chi_A}(|_{L_j}\xi_{T_j}|\leq N)\leq P^{\chi_A}(\xi_t=\emptyset \text{ for some }t).$$
\end{lemma}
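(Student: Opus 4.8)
The plan is to follow the template of Proposition I.2.8 in \cite{Liggett99}. Write $\Gamma=\{\xi_t=\emptyset\text{ for some }t\}$ and $\sigma=\Gamma^c$; if $P^{\chi_A}(\Gamma)=1$ there is nothing to prove, so assume $P^{\chi_A}(\sigma)>0$. The first step is a positive-correlations reduction. Both $N(L,T)$ and $|_L\xi_T|$ are nondecreasing functionals of the trajectory of $1$'s of the truncated process ${}_L\xi$: enlarging the set of $1$'s at every space--time point -- which happens monotonically when one adds birth arrows or $\ast_{-1}$'s, or deletes $\bullet_1$'s or $\bullet_{-1}$'s in the graphical representation -- can only increase $|S(L,T)\cap{}_L\xi|$, hence the count $N(L,T)$, and likewise can only increase $|_L\xi_T|$. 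Thus $\{N(L,T)\le M\}$ and $\{|_L\xi_T|\le N\}$ are both decreasing events with respect to the partial order on the graphical representation for which $\xi_t$ is monotone (the order underlying attractiveness). Since the graphical representation is a product of independent Poisson processes, Harris's inequality -- the space--time form of \eqref{posicor} -- gives, for every $L,T$,
$$
P^{\chi_A}(N(L,T)\le M)\,P^{\chi_A}(|_L\xi_T|\le N)\ \le\ P^{\chi_A}\big(N(L,T)\le M,\ |_L\xi_T|\le N\big).
$$
So it suffices to show $\limsup_{j\to\infty}P^{\chi_A}(N(L_j,T_j)\le M,\ |_{L_j}\xi_{T_j}|\le N)\le P^{\chi_A}(\Gamma)$.

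By the reverse Fatou lemma (the indicators are bounded), this follows if, almost surely on $\sigma$, one has $N(L_j,T_j)>M$ or $|_{L_j}\xi_{T_j}|>N$ for all large $j$. Fix $\omega\in\sigma$. By the proof of Lemma \ref{spacetime}, $|\xi_t(\omega)|\to\infty$, so $\xi_t(\omega)$ is eventually unbounded. If, along a subsequence of $j$'s, the truncated process ${}_{L_j}\xi$ never occupies the outer shell $\{\max_i|x_i|=L_j\}$ before time $T_j$, then no suppressed birth ever occurs, ${}_{L_j}\xi$ coincides with $\xi$ on $[0,T_j]$, and $|_{L_j}\xi_{T_j}|=|\xi_{T_j}|>N$ once $j$ is large; done along that subsequence. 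Along the complementary subsequence ${}_{L_j}\xi$ does reach the shell, and here I would use the restart mechanism of Bezuidenhout--Grimmett: the truncated process on the finite box $[-L_j,L_j]^d$ reaches the $1$-free configuration almost surely (no $1$'s is absorbing for the $1$-marginal and is accessible from every state), so it cannot by itself account for survival; the infection that leaves the box is, from the instant it does so, a fresh copy of the process, which in turn is either ``contained'' in its own box and then dies in finite time, or escapes a still larger box, creating further shell occupations. Thinning by unit-length time windows makes these occupations automatically $\ge 1$ apart in time at the same site, so iterating the dichotomy shows that survival on $\sigma$ forces either arbitrarily large occupied sets at the terminal times $T_j$ or more than $M$ well-separated shell occupations in the box of scale $j$.

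The hard part will be making this last step precise: identifying which occupied cells of a given box are genuine ``escapes'', showing that the corresponding excursions are conditionally independent copies of the process, and bounding from below the number of well-separated shell occupations they generate in a box whose spatial and temporal sizes both go to infinity along the given (otherwise arbitrary) sequences. This is exactly the content of Liggett's Proposition I.2.8, and it transfers to our setting without change: the $-1$'s affect none of the inputs, since every jump of $\xi_t$ is between states comparable in the order $-1\le 0\le 1$ (so Harris's theorem applies as in \eqref{posicor}), the functionals $N(L,T)$ and $|_L\xi_T|$ remain monotone in the birth/death marks of the graphical representation, and the truncated process on a finite box still makes the $1$'s vanish almost surely.
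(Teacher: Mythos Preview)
Your proposal is correct and follows the same route as the paper: the paper itself omits the proof entirely, stating only that it ``is essentially the same as that of Proposition I.2.8 in \cite{Liggett99}.'' You go further and supply the FKG reduction explicitly---both $\{N(L,T)\le M\}$ and $\{|_L\xi_T|\le N\}$ are decreasing in the graphical-representation order, so positive correlations give the product bound by the joint probability---and then, like the paper, defer the sample-path step to Liggett.

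One comment on your intermediate heuristic: the ``restart'' picture you sketch, in which infection escaping the box spawns a fresh copy that either dies or ``escapes a still larger box, creating further shell occupations,'' is not quite the mechanism in Liggett's proof. Occupations of a \emph{larger} shell do not contribute to $N(L_j,T_j)$, which counts hits on the fixed shell $\{\max_i|x_i|=L_j\}$. Liggett's argument instead works directly with the dichotomy for each fixed $j$: on survival either $|\xi_{T_j}|$ is large (and equals $|_{L_j}\xi_{T_j}|$ if the shell was never reached) or the full process must repeatedly touch the shell of the box $[-L_j,L_j]^d$ itself---and those successive touches, thinned to be time-separated, force $N(L_j,T_j)>M$. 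Since you ultimately hand this step off to Liggett anyway, this does not affect the validity of your proposal, but your informal explanation of that step is slightly off-target.
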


Define $N_+(L,T)$ to be the maximal number of space-time points in 
$$S_+(L,T)=\{ (x,s)\in \{L\}\times [0,L)^{d-1}\times [0,T]: x\in{}_L\xi_s\}$$
such that each pair of these points having the same spatial coordinate have their time coordinates at distance at least 1. We have the following by the positive correlation stated in \eqref{posicor}:
\begin{lemma}\label{poscor}
$$
P^{\chi_{[-n,n]^d}}(|_L\xi_T\cap [0,L)^d|\leq N)\leq [P^{\chi_{[-n,n]^d}}(|_L\xi_T|\leq 2^dN)]^{2^{-d}}
$$
and 
$$
P^{\chi_{-[n,n]^d}}(N_+(L,T)\leq M)\leq [P^{\chi_{[-n,n]^d}}(N(L,T)\leq d2^dM)]^{2^{-d}/d}.
$$
\end{lemma}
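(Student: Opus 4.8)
\emph{Proof plan.} The plan is to reproduce, in our setting, the symmetrization argument behind Proposition I.2.8 of \cite{Liggett99}: combine the symmetry of the truncated process ${}_L\xi$ started from the symmetric configuration $\chi_{[-n,n]^d}$ with the positive correlations \eqref{posicor}, applied to a decomposition of the big box (for the first bound) and of its space--time boundary (for the second) into equidistributed congruent pieces.

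For the first inequality, partition the box into the $2^d$ ``octants'' $Q^\varepsilon = I^{\varepsilon_1}_1 \times \cdots \times I^{\varepsilon_d}_d$, $\varepsilon \in \{+,-\}^d$, with $I^+ = [0,L)$ and $I^- = (-L,0]$, so that $[0,L)^d = Q^{(+,\dots,+)}$ and the $Q^\varepsilon$ together cover the support of ${}_L\xi_T$ (births stay inside the box). Put $X_\varepsilon = |{}_L\xi_T \cap Q^\varepsilon|$. The initial law $\chi_{[-n,n]^d}$, the truncation region, and the graphical-representation rates are all invariant under each reflection $x_i \mapsto -x_i$, so the law of ${}_L\xi_T$ under $P^{\chi_{[-n,n]^d}}$ is invariant under the full hyperoctahedral group; since these reflections carry $Q^{(+,\dots,+)}$ onto every $Q^\varepsilon$, all the $X_\varepsilon$ are identically distributed. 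Each $X_\varepsilon$ is an increasing function of the configuration, so each $\{X_\varepsilon \le N\}$ is a decreasing event; applying \eqref{posicor} (which gives positive correlations for decreasing events as well, by negation) and inducting over the $2^d$ octants,
\beq
P^{\chi_{[-n,n]^d}}\Big( \bigcap_\varepsilon \{X_\varepsilon \le N\} \Big) \;\ge\; \prod_\varepsilon P^{\chi_{[-n,n]^d}}(X_\varepsilon \le N) \;=\; \big[ P^{\chi_{[-n,n]^d}}\big( |{}_L\xi_T \cap [0,L)^d| \le N \big) \big]^{2^d}.
\eeq
Since $|{}_L\xi_T| \le \sum_\varepsilon X_\varepsilon$ we have $\bigcap_\varepsilon \{X_\varepsilon \le N\} \subseteq \{|{}_L\xi_T| \le 2^d N\}$; combining this with the display and raising to the power $2^{-d}$ gives the first bound.

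For the second inequality I would run the same argument on the space--time boundary $S(L,T)$. Decompose $S(L,T)$ into the $d2^d$ regions obtained from $\{L\} \times [0,L)^{d-1} \times [0,T]$ by acting with the $d2^d$ elements of the hyperoctahedral group that send the distinguished coordinate to one of the $2d$ signed directions and permute/reflect the remaining $d-1$ coordinates among the octant-intervals; these regions cover $S(L,T)$, and the one for the identity is precisely the region appearing in $S_+(L,T)$. Let $N^{(j)}(L,T)$ be the analogue of $N(L,T)$ with the space--time points restricted to the $j$-th region. By the same symmetry all the $N^{(j)}(L,T)$ are identically distributed, with $N^{(1)}(L,T) = N_+(L,T)$; each is an increasing functional of the trajectory $({}_L\xi_s)_{0 \le s \le T}$, so each $\{N^{(j)}(L,T) \le M\}$ is decreasing; and restricting a valid time-separated packing of $S(L,T)$ to each region shows $N(L,T) \le \sum_{j=1}^{d2^d} N^{(j)}(L,T)$, hence $\bigcap_j \{N^{(j)}(L,T)\le M\} \subseteq \{N(L,T) \le d2^d M\}$. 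Applying positive correlations for the finite space--time process $({}_L\xi_s)_{0\le s\le T}$ — which follows from \eqref{posicor} applied to the joint law at finitely many times, as in \cite{Liggett99}, Section I.2 — across the $d2^d$ regions and raising to the power $2^{-d}/d = 1/(d2^d)$ yields the claim.

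I do not expect a genuine obstacle: this lemma is the direct counterpart of Proposition I.2.8 of \cite{Liggett99}, and the feature peculiar to our model, the absence of a dual, never enters, since attractiveness alone furnishes the positive correlations. The two points that need care are (i) upgrading \eqref{posicor} to the space--time trajectory (needed only for the second inequality), which is the standard finite-range-of-dependence argument of \cite{Liggett99}, Section I.2, and (ii) the bookkeeping of the box boundary (half-open versus closed octants, sites with some $|x_i| = L$), which is harmless and is handled exactly as there.
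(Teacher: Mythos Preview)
Your proposal is correct and is exactly the standard symmetry-plus-positive-correlations argument the paper invokes; indeed the paper gives no self-contained proof of this lemma but simply refers the reader to Liggett \cite{Liggett99}. One small correction: the relevant results in \cite{Liggett99} are Propositions I.2.6 and I.2.11 (as the paper cites just below the lemma), not Proposition I.2.8, which is the analogue of Lemma \ref{relation} rather than of this statement.
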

The reader is referred to Proposition 2.6 and Proposition 2.11 in \cite{Liggett99} for analogues of Lemma \ref{poscor}. The proof of the following theorem is similar to that of Theorem 2.12 in \cite{Liggett99}. To avoid repetition we will present the major steps here and refer the reader to \cite{Liggett99} for details of the proof.

\mn
\textbf{The finite space-time condition.}
\begin{theorem} \label{opcomp}
If $\xi_t$ survives, then it satisfies the following condition: \\For every $\ep>0$ there are choices of $n,L,T$ so that 
\beq\label{result1}
P^{\chi_{[-n,n]^d}}\left({}_{L+2n} \xi_{T+1} \supset x+[-n,n]^d \text{ for some } x\in[0,L)^d\right)>1-\ep
\eeq
and
\begin{align}\label{result2}
\nonumber P^{\chi_{[-n,n]^d}}\big({}_{L+2n} \xi_{t+1} \supset & x+[-n,n]^d \text{ for some } 0\leq t\leq T, \\
& \text{ and for some } x\in \{L+n\}\times [0,L)^{d-1}\big)>1-\ep.
\end{align}
\end{theorem}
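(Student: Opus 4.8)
The plan is to follow the scheme of Theorem I.2.12 in Liggett \cite{Liggett99}, assembling the building blocks (Lemmas \ref{bigbox}--\ref{poscor}) in the same way, and to point out only where our process departs from the contact process. The starting point is Lemma \ref{bigbox}: since $\xi_t$ survives, we can pick $n$ so large that $P^{\chi_{[-n,n]^d}}(\xi_t\neq\emptyset\ \forall t)>1-\delta$ for a small $\delta=\delta(\ep)$ to be chosen. With $n$ fixed, Lemma \ref{spacetime} lets us choose $t_0$ and then $L_0$ so that $P^{\chi_{[-n,n]^d}}(|_{L}\xi_{t}|\geq N)>1-\delta$ for all $t\geq t_0$, $L\geq L_0$, where $N$ is a large integer to be specified. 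The idea is now to propagate this abundance of 1's out to the boundary and up in time, using Lemmas \ref{relation} and \ref{poscor} to convert ``many 1's somewhere in a box'' into ``a translate of $[-n,n]^d$ fully occupied.''

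First I would establish \eqref{result1}. Run the truncated process $_{L+2n}\xi$; by the choice of $t_0,L_0$ and monotonicity of truncation there are at least $N$ sites in state $1$ in $_{L}\xi_{T}$ (for suitable $T\geq t_0$) with probability $>1-\delta$. Using Lemma \ref{poscor} (first inequality) I push this down to: with probability $>1-\delta'$ there are at least $N'$ ones in $_{L}\xi_T\cap([0,L)^d+z)$ for some translate, i.e.\ many ones in a sub-box of side $L$; the positive-correlation trick is exactly what lets us split the big box into $2^d$ congruent sub-boxes and conclude one of them is rich. Having $N'$ ones clustered in a region of diameter $O(L)$, I then wait one unit of time and argue that with probability bounded below (depending on $n$, $\lambda$, $\theta$, and on $N'$ being large) at least one translate $x+[-n,n]^d$ with $x\in[0,L)^d$ becomes entirely occupied by $1$'s during that unit interval. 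This is the only place where the $-1$'s intervene: on a unit time interval, the probability that any of the finitely many relevant sites sees a $\bullet_{-1}$ and is then caught empty is $O(\alpha)$ small, but more robustly one simply observes that filling a fixed box $[-n,n]^d$ from a single nearby $1$ in one unit of time has a fixed positive probability $q=q(n,\lambda,\theta)>0$ even in the presence of $-1$'s (births at rate $\lambda$ still occur; $-1$'s only block, and a $-1$ can be overwritten only after it flips to $0$, which we do not even need — we just need the $1$'s to spread through currently-$0$ sites). With $N'$ essentially independent ``attempts'' (made independent by the finite range of the graphical representation and the positive correlation inequality, as in \cite{Liggett99}), the chance that none succeeds is $\leq (1-q')^{N'}<\ep$ for $N'$ large. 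Choosing $N$ (hence $N'$) accordingly gives \eqref{result1}, with the final $T+1$ accounting for the extra unit interval.

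For \eqref{result2} the argument is the same but localized to the lateral boundary. Here I use $N_+(L,T)$ and $S_+(L,T)$, and the second inequality in Lemma \ref{poscor} together with Lemma \ref{relation}: surviving implies, via Lemma \ref{relation}, that $N(L_j,T_j)$ cannot stay bounded on the survival event, hence $P^{\chi_{[-n,n]^d}}(N(L,T)\leq d2^d M)$ is small for $L,T$ large; Lemma \ref{poscor} then makes $P(N_+(L,T)\leq M)$ small, i.e.\ with high probability there are at least $M$ space-time points on the face $\{L\}\times[0,L)^{d-1}\times[0,T]$ that are occupied by $1$'s and are time-separated by at least $1$. Each such point is a seed from which, in the following unit of time, we attempt to fill a translate $x+[-n,n]^d$ centered near the face (so $x\in\{L+n\}\times[0,L)^{d-1}$), inside the truncation window $_{L+2n}$; the time-separation by $\geq 1$ makes these $M$ attempts have the required conditional independence for a product bound, and again the failure probability is $\leq(1-q')^M<\ep$ once $M$ is large. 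Finally one reconciles the two choices of $(n,L,T)$ by taking $n$ common and $L,T$ the larger of the two pairs (monotonicity in $L$ is built into the truncated process, and all the estimates only improve with larger $L,T$).

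The main obstacle I expect is the step ``many time-separated occupied boundary points $\Rightarrow$ one of the attempted box-fillings succeeds,'' specifically making the attempts genuinely independent (or at least yielding a clean product upper bound on simultaneous failure). In Liggett's contact-process treatment this is handled by the graphical representation's finite range together with the self-duality/additivity; we have no dual, so the substitute is exactly the positive-correlation inequality \eqref{posicor} (Harris' theorem), which applies because every transition of $\xi_t$ is between states comparable in $-1\le 0\le 1$. One has to be a little careful that the event ``$x+[-n,n]^d$ gets filled in $[s,s+1]$ from the seed at $(x',s)$'' can be taken to depend only on graphical-representation data in a bounded space-time neighborhood, so that spatially or temporally separated attempts are independent outright and the correlation inequality is only needed to decouple overlapping ones; this is routine but is the crux, and it is also the only place where the presence of $-1$'s has to be explicitly accounted for (they can only hurt, and by a bounded-below margin they do not prevent the fill). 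Everything else is a transcription of \cite{Liggett99}, as the lemmas above were set up precisely to make it so.
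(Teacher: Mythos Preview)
Your outline is correct and follows the paper's route (which in turn follows Liggett's Theorem I.2.12) closely. One point, however, needs repair: you propose to use the positive-correlation inequality \eqref{posicor} to ``decouple overlapping attempts'' and obtain a product upper bound on the event that all box-filling attempts fail. This does not work. Each success event $S_i=\{y_i+[-n,n]^d\text{ filled}\}$ is increasing, so each failure $F_i=S_i^c$ is decreasing; Harris/FKG then gives $P(\bigcap_i F_i)\ge \prod_i P(F_i)$, which is the wrong direction. The paper (and Liggett) avoid this entirely by a pigeonhole step: choose $N$ so large that \emph{any} $N$ points of $\ZZ^d$ contain $N'$ points that are pairwise $(2n+1)$-separated in $L_\infty$. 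The $N'$ box-filling attempts, each using only graphical data in the disjoint regions $(y_i+[-n,n]^d)\times[T,T+1]$, are then genuinely independent, and the bound $(1-q)^{N'}$ is exact. For \eqref{result2} the time-separation $\ge 1$ built into $N_+(L,T)$ plays the same role. Positive correlations are used only in Lemmas \ref{relation} and \ref{poscor}, not in this final step.

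Two smaller corrections. First, the lower bound $q=P^{\chi_0}({}_n\xi_1\supseteq[-n,n]^d)>0$ comes from the \emph{worst-case} initial condition $\chi_0$, in which every site except the seed is in state $-1$; so contrary to your remark ``which we do not even need,'' one \emph{does} need the $-1$'s in $[-n,n]^d$ to flip to $0$ before infection (this happens with positive probability in time $1$, which is the paper's point). Second, your closing ``take the larger $L,T$'' is too quick: Lemma \ref{relation} only yields a bound on $P(N(L_j,T_j)\le M)$ when $P(|{}_{L_j}\xi_{T_j}|\le 2^dN)$ stays bounded \emph{away from zero} along the sequence. The paper therefore chooses $L_j\uparrow\infty,\ T_j\uparrow\infty$ with $P(|{}_{L_j}\xi_{T_j}|>2^dN)=1-\delta$ exactly, so that the product in Lemma \ref{relation} is $\le \delta^2$ and hence $P(N(L_j,T_j)\le Md2^d)<\delta$ for large $j$; one then fixes $L=L_j$, $T=T_j$ for that $j$, rather than enlarging freely.
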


\begin{proof}
Given $\delta>0$, by Lemma \ref{bigbox} we can choose a large enough $n$ such that 
\begin{equation}\label{box}
P^{\chi_{[-n,n]^d}}(\xi_t\neq \emptyset\hh \forall t\geq 0)>1-\delta^2.
\end{equation}
Next, Lemma \ref{spacetime} allows us to choose $L_j\uparrow \infty$ and $T_j\uparrow\infty$ so that
\begin{equation}\label{topsites}
P^{\chi_{[-n,n]^d}}(|_{L_j}\xi_{T_j}|>2^dN)=1-\delta
\end{equation}
for each $j\geq 1$. Applying Lemma \ref{relation} with $M$ and $N$ replaced by $Md2^d$ and $N2^d$ respectively, and combined with (\ref{box}) and (\ref{topsites}), there exists some $j$ so that
$$P^{\chi_{[-n,n]^d}}(N(L_j,T_j)>Md2^d)>1-\delta.$$
Let $L=L_j$ and $T=T_j$ for this choice of $j$. By Lemma \ref{poscor} we have
$$P^{\chi_{[-n,n]^d}}(|_L\xi_T\cap [0,L)^d|\leq N)\leq [P^{\chi_{[-n,n]^d}}(|_L\xi_T|\leq 2^dN)]^{2^{-d}}\leq \delta^{2^{-d}}$$
and 
$$P^{\chi_{[-n,n]^d}}(N_+(L,T)\leq M)\leq [P^{\chi_{[-n,n]^d}}(N(L,T)\leq d2^dM)]^{2^{-d}/d}\leq \delta^{2^{-d}/d}.$$

Choose $N$ so large that any $N$ points in $\ZZ^d$ will contain a subset of at least $N'$ points, each pair of which is separated by an $L_\infty$ distance of at least $2n+1$, where $N'$ is chosen so large that 
$$\left( 1- P^{\chi_0}(_n \xi_1 \supseteq [-n,n]^d)\right)^{N'} \leq \delta.$$
Note that $P^{\chi_0}(_n \xi_1 \supseteq [-n,n]^d)$ is positive because the event that all sites in $[-n,n]^d\backslash \{0\}$ first flip to $0$ and then become infected involves only finitely many sites. Hence 
$$P^{\chi_{[-n,n]^d}}\left(_{L+2n} \xi_{T+1} \supseteq x+[-n,n]^d \text{ for some } x\in[0,L)^d\right)\geq(1-\delta^{2^{-d}})(1-\delta).$$
Choosing $\delta$ sufficiently small with respect to $\ep$ gives \eqref{result1}. To show \eqref{result2}, we choose $M$ in a similar fashion so that any $M$ points in $\ZZ^d$ contain a subset of at least $M'$, which is chosen so large that 
$$\left(1-P^{\chi_0}( _n\xi_1 \supseteq [0,2n]\times [-n,n]^{d-1})\right)^{M'} \leq \delta.$$

\end{proof}

\mn
\textit{Proof of Theorem \ref{dieatcrit}.}
Theorem \ref{opcomp} implies that if our process $\xi_t$ survives then it dominates a supercritical two dimensional oriented site percolation in which sites are open with probability $1-\ep$, with sites that can be reached from the origin implying the existence of occupied copies of $[-n,n]^d$ in corresponding regions in space time $\ZZ^d \times [0,\infty)$. See Theorem I.2.23 in  \cite{Liggett99} for a detailed argument on establishing the  comparison. 

The facts that (i) our process dies out at $\lambda_c(\alpha,\theta)$ and (ii) there is a nontrivial stationary distribution when $\lambda>\lambda_c(\alpha,\theta)$ are both consequences of this comparison with oriented percolation (see page 54-56 in \cite{Liggett99} for reference). Since $\ep$ is arbitrarily small, this shows that if our process survives for a parameter $\lambda$ then there is a $\lambda'<\lambda$ for which the process survives, so if survival occurred at the critical value then we would have a contradiction, proving the first part of Theorem \ref{dieatcrit}. The comparison with supercritical oriented percolation also implies the existence of a nontrivial stationary distribution, i.e., $P(\xi^1_\infty(0)=1)>0$.  See \cite{StFlour} for many applications of this idea. So the second statement in Theorem \ref{dieatcrit} follows as well.

\clearp 

\section{Proof of Theorem \ref{percsurv}} \label{sec:pfth3}

\subsection{Four crossings} \label{sec:spweb}

In the site percolation we use a \textit{crossing} to refer to a path made of open sites. In \eqref{fourQ} we defined four thin rectangles $Q_1,\dots,Q_4$ in $[-N,N]^2$. 
Our goal is to show

\begin{lemma} \label{fourcross}
Consider site percolation in which sites are open with probability $p > p_c^{site}(\ZZ^2)$. If $N$ is large then with high probability there are  top-to-bottom crossings of $Q_1$ and $Q_3$ and left-to-right crossings of $Q_2$ and $Q_4$.
\end{lemma}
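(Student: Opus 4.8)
The plan is to reduce the statement to a standard fact about crossings of long thin rectangles in supercritical Bernoulli site percolation on $\ZZ^2$. Fix $p > p_c^{site}(\ZZ^2)$. By symmetry and a union bound over the four rectangles it suffices to handle one of them, say the top-to-bottom crossing of $Q_1 = [-N/3-N^\eta, -N/3+N^\eta]\times[-N,N]$. This is a rectangle of height $2N$ and width $2N^\eta$, so since $\eta>0$ is small its aspect ratio (height over width) is $N^{1-\eta}\to\infty$. The statement we want is that such a rectangle is crossed the \emph{long way} (here, vertically, i.e. top-to-bottom) with probability tending to $1$ as $N\to\infty$.

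First I would invoke the Russo--Seymour--Welsh (RSW) technology for supercritical site percolation: in the regime $p>p_c^{site}(\ZZ^2)$, the probability of an open left-to-right crossing of an $a\times b$ rectangle stays bounded below by a constant depending only on the ratio $b/a$ (see Kesten's book \cite{Kesten}; this is precisely the setting of ``sponge crossings'' mentioned in the introduction). Concretely, I would cover $Q_1$ by a chain of order $N^{1-\eta}$ overlapping sub-rectangles, each of height comparable to its width $2N^\eta$ (say $[-N/3-N^\eta,-N/3+N^\eta]\times[kN^\eta,(k+3)N^\eta]$ for consecutive $k$), each of which is crossed vertically with probability at least some $c=c(p)>0$ by RSW. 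To glue consecutive vertical crossings together I would also require horizontal crossings of the overlap squares, again of probability at least $c$. If all of these boxing events occur, their crossings concatenate into a top-to-bottom crossing of $Q_1$.

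The one genuine issue is that the individual crossing events have probability only $c<1$, not $1-o(1)$, so a naive union bound over $O(N^{1-\eta})$ sub-boxes fails. The standard fix, which I would carry out, is a one-step renormalization / stochastic-domination argument: partition the long direction into $O(N^{1-\eta})$ consecutive blocks, declare a block ``good'' if the relevant O(1) crossing and gluing events inside it (and a fixed neighborhood of it) all occur, note that ``good'' blocks dominate a $1$-dependent site percolation on a line with marginal $\ge 1-\delta$ once $\delta$ is made small by choosing the blocks large but still $N^{o(1)}$ in size (here one must use that the conditional crossing probabilities can be boosted close to $1$ by taking unions of several translated crossing attempts, a standard consequence of RSW and FKG), and that in a $1$-dependent line percolation with density close to $1$ the probability that \emph{every} block on a segment of length $O(N^{1-\eta})$ is good tends to $1$ because the expected number of bad blocks is $O(\delta N^{1-\eta})$ — wait, this does not go to $0$. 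So instead I would use the sharper route: it is a classical theorem (e.g. Kesten \cite{Kesten}, or via the exponential decay of the dual connectivity for $p>p_c$) that for $p>p_c^{site}(\ZZ^2)$ the probability of a long-way crossing of an $a \times b$ rectangle with $b/a\to\infty$ tends to $1$; indeed the probability of \emph{failure} is bounded by the probability of a dual (closed) left-right crossing of the same rectangle the short way, which decays exponentially in the aspect ratio $b/a = N^{1-\eta}$ by exponential decay of the radius of closed clusters in the supercritical phase. This gives failure probability $\le \exp(-cN^{1-\eta})$ for each $Q_i$, and a union bound over the four rectangles completes the proof. I expect assembling this exponential-decay estimate for the dual short-way crossing — citing the correct form from \cite{Kesten} — to be the only point requiring care; everything else is bookkeeping.
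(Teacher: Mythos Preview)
Your final approach --- block the long-way open crossing by a short-way closed dual crossing, then invoke exponential decay of subcritical clusters --- is exactly what the paper does, but you have the exponent wrong and are vague precisely where the paper is careful. The obstruction to a top-to-bottom open crossing of $Q_1$ is a left-to-right closed path in the \emph{matching lattice} $\L^*$ (vertices of $\ZZ^2$ with nearest-neighbor \emph{and} diagonal edges); the Sykes--Essam/van den Berg relation $p_c^{site}(\L)+p_c^{site}(\L^*)=1$ puts the closed process on $\L^*$ into the subcritical phase when $p>p_c^{site}(\ZZ^2)$, and the paper then cites Antunovi\'c--Veseli\'c for exponential decay of cluster size on quasi-transitive graphs to control it.

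The blocking dual path must traverse the \emph{short} side of $Q_1$, which has length $2N^\eta$; there are $2N+1$ possible starting points on the left edge, each contributing at most $\exp(-2\gamma_p^* N^\eta)$. Hence the failure probability is $\le (2N+1)\exp(-2\gamma_p^* N^\eta)$, not your $\exp(-cN^{1-\eta})$: the decay is governed by the short side, not the aspect ratio. Your claimed rate is too optimistic (for small $\eta$ one has $N^{1-\eta}\gg N^\eta$), though the correct bound still tends to $0$, so the lemma survives. The RSW/renormalization detour is unnecessary and, as you noticed yourself, does not close as written.
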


In order to do this we need some results on site percolation. We use Grimmett's book \cite{Grimmett} for our main reference. Consider the site percolation on $\L=\ZZ^2$ where each site is open independently with probability $p$. Let $\mathcal{C}_0$ denote the cluster of open sites containing the origin and $p_c^{site}:=\sup \{p \in [0, 1]: E_p (|\mathcal{C}_0|) < \infty\}$. We follow Sykes and Essam \cite{Sykes} to define $\L^*$, the matching lattice of $\L$, to be $\ZZ^2$ with all diagonally adjacent vertices connected.

\begin{figure}[ht]
\begin{center}
\begin{picture}(240,220)
\put(30,190){\line(1,0){160}}
\put(30,150){\line(1,0){160}}
\put(30,110){\line(1,0){160}}
\put(30,70){\line(1,0){160}}
\put(30,30){\line(1,0){160}}
\put(190,30){\line(0,1){160}}
\put(150,30){\line(0,1){160}}
\put(110,30){\line(0,1){160}}
\put(70,30){\line(0,1){160}}
\put(30,30){\line(0,1){160}}
\put(190,30){\line(-1,1){160}}
\put(150,30){\line(-1,1){120}}
\put(110,30){\line(-1,1){80}}
\put(70,30){\line(-1,1){40}}
\put(30,150){\line(1,1){40}}
\put(30,110){\line(1,1){80}}
\put(30,70){\line(1,1){120}}
\put(150,30){\line(1,1){40}}
\put(110,30){\line(1,1){80}}
\put(70,30){\line(1,1){120}}
\put(30,30){\line(1,1){160}}
\put(150,190){\line(1,-1){40}}
\put(110,190){\line(1,-1){80}}
\put(70,190){\line(1,-1){120}}
\end{picture}
\caption{Picture of the matching lattice $\L^*$}
\label{dualL}
\end{center}
\end{figure}
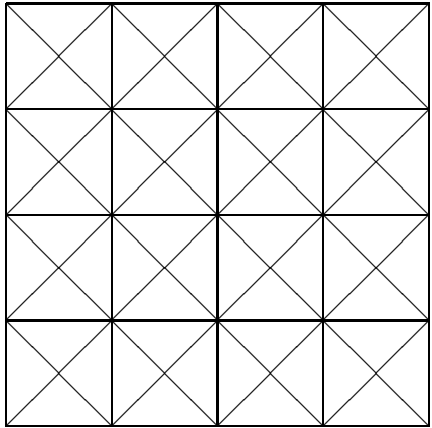

They obtained the relation
\beq\label{critical}
p^{site}_c(\L)+p^{site}_c(\L^*)=1,
\eeq
which was later rigorously proved by Van den Berg \cite{Vdberg}.
See also Chapter 3 of Kesten's 1982 book on percolation, \cite{Kesten}.

The matching pair $\L$ and $\L^*$ share the same set of vertices. A vertex $v\in \L$ is paired with the same vertex $v^*\in \L^*$. If $v$ is open (resp.~closed), then $v^*$ is closed (resp.~open). Thus a dual percolation is defined on $\L^*$ where each site is open with probability $1-p$. When $p>p^{site}_c(\L)$, according to (\ref{critical}) the dual percolation on $\L^*$ is subcritical. We use $P^*_{1-p}$ to denote the law of the dual percolation.

It is well known that for subcritical percolation on $\ZZ^2$, the size of the cluster $\mathcal{C}_0$ has an exponential tail (see for example Section 6.3 in \cite{Grimmett}). Theorem 3 in Antunovi\'{c}  and Veseli\'{c}  \cite{Veselic} proves this result for \textit{quasi-transitive} graphs. A graph $G$ is called \textit{quasi-transitive}, if there exists a finite set of vertices $\FF$ such that for any vertex $x\in G$ there is a $y\in \FF$ and graph automorphism $\varphi \in Aut(G)$ such that $\varphi(y)=x$. 

\begin{lemma}[Theorem 3 \cite{Veselic}]\label{subcri_cluster}
Let $G$ be a quasi-transitive graph and let $p<p^{site}_c(G)= \sup \{p \in [0, 1]: E_p (|\mathcal{C}_0|) < \infty\}$. There is a constant $\gamma_p>0$ such that for any positive integer $n$ we have
$$
P_p(|\mathcal{C}_0| \geq n) \leq \exp(-\gamma_p n).
$$
\end{lemma}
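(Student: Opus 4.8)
The plan is to reduce the statement to the single fact that the mean cluster size is finite, $\chi(p)=E_p(|\mathcal{C}_0|)<\infty$, which holds under the hypothesis because $\chi(\cdot)$ is nondecreasing (the usual monotone coupling) and $p$ lies strictly below $\sup\{p':\chi(p')<\infty\}$; from this I would bootstrap finiteness of the \emph{mean} cluster size up to finiteness of an \emph{exponential moment} $E_p(e^{h|\mathcal{C}_0|})$ for some $h>0$. Since Markov's inequality gives $P_p(|\mathcal{C}_0|\ge n)\le e^{-hn}E_p(e^{h|\mathcal{C}_0|})$, exponential decay of the cluster-size distribution is equivalent to the existence of such a finite exponential moment, so it suffices to produce one.

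First I would use quasi-transitivity to make the mean cluster size \emph{uniformly} finite. Writing $\chi_v(p)=E_p(|\mathcal{C}_v|)$, any automorphism carrying a vertex of the finite set $\FF$ from the definition of quasi-transitivity onto $v$ shows that $\chi_v$ depends only on the orbit of $v$, while the FKG inequality gives $\tau_p(0,x)\ge\tau_p(0,v)\,\tau_p(v,x)$ and hence $\chi_v(p)\le\chi(p)/\tau_p(0,v)<\infty$ for $0<p<1$; consequently $\bar\chi:=\sup_v\chi_v(p)=\max_{v\in\FF}\chi_v(p)<\infty$. Next I would invoke the tree-graph inequality (Aizenman and Newman; see also \cite{Grimmett}, and \cite{Veselic} for the quasi-transitive, site-percolation form): for any vertices $x_1,\dots,x_k$,
\[
P_p\bigl(\textstyle\bigcap_{i=1}^k\{0\leftrightarrow x_i\}\bigr)\ \le\ \sum_{T}\ \sum_{\text{placements of the Steiner vertices of }T\text{ in }G}\ \prod_{\{u,w\}\in E(T)}\tau_p(u,w),
\]
where $T$ ranges over the finitely many tree shapes on the vertex set $\{0,x_1,\dots,x_k\}$ together with auxiliary (Steiner) vertices of degree at least $3$. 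Summing this over all $x_1,\dots,x_k\in V(G)$, interchanging the two sums, and then — for each fixed $T$ — summing out the $\le 2k-1$ non-root vertices one current leaf at a time (each such summation contributes a factor $\sum_z\tau_p(a,z)=\chi_a(p)\le\bar\chi$, where $a$ is the unique neighbor of the leaf being removed) gives
\[
E_p(|\mathcal{C}_0|^k)\ \le\ t_k\,\max(1,\bar\chi)^{2k-1},\qquad t_k:=\#\{\text{tree shapes}\}\le C_0^{\,k}\,k!,
\]
where $C_0$ is an absolute constant (a tree on $k{+}1$ leaves has at most $2k-1$ edges; the binary such trees already number $(2k-3)!!\le 2^k k!$, and including higher-degree internal vertices only enlarges $C_0$). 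Substituting into $E_p(e^{h|\mathcal{C}_0|})=\sum_{k\ge0}\frac{h^k}{k!}E_p(|\mathcal{C}_0|^k)$ leaves a geometric series convergent for $0<h<h_0:=\bigl(2C_0\max(1,\bar\chi)^2\bigr)^{-1}$; together with Markov's inequality and the trivial bound $P_p(|\mathcal{C}_0|\ge n)\le P_p(0\text{ is open})=p<1$ (which takes care of the small values of $n$), this delivers a $\gamma_p>0$ with $P_p(|\mathcal{C}_0|\ge n)\le e^{-\gamma_p n}$ for every $n\ge 1$.

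The heart of the argument, and the step most likely to need care, is the second one: one must have the tree-graph inequality in the site-percolation setting — it follows from the van den Berg–Kesten inequality and uses no lattice structure — and, crucially, the bound $t_k\le C_0^k k!$ on the number of tree shapes, so that this combinatorial factor is exactly cancelled by the $1/k!$ in the exponential series while the remaining factor $\max(1,\bar\chi)^{2k-1}$ merely shrinks the radius of convergence but keeps it positive. An alternative, which is essentially the route taken in the cited reference \cite{Veselic}, is to run the Aizenman–Barsky differential inequalities for $M(p,h)=E_p(1-e^{-h|\mathcal{C}_0|})$ on quasi-transitive graphs, which produces a finite exponential moment directly below the threshold; the tree-graph argument above is somewhat shorter here because only the subcritical direction is needed, not the full sharpness statement. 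Quasi-transitivity is used only in obtaining the uniform bound $\bar\chi<\infty$; for the matching lattice $\L^*$ that this paper actually applies the lemma to — vertex-transitive of degree $8$ — this is immediate.
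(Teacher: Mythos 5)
Your proposal is correct in outline, but it necessarily differs from the paper, because the paper does not prove this statement at all: Lemma \ref{subcri_cluster} is imported verbatim as Theorem 3 of \cite{Veselic}, whose argument runs the Aizenman--Barsky differential inequalities for $M(p,h)=E_p(1-e^{-h|\mathcal{C}_0|})$ on quasi-transitive graphs (and in fact proves more, namely sharpness of the transition). What you give instead is the classical Aizenman--Newman route: finite susceptibility $\Rightarrow$ factorial moment bounds via the tree-graph inequality $\Rightarrow$ finite exponential moment $\Rightarrow$ Chernoff. This is exactly the mechanism behind the quantitative bound $P_p(|\mathcal{C}_0|\ge n)\le 2\exp(-n/(2\chi(p)^2))$ from (6.77) of \cite{Grimmett} that the paper invokes later in Section \ref{sec:pfth5}, so your version has the advantage of producing the explicit constant the paper actually uses, while the Aizenman--Barsky route buys the two-sided sharpness statement that is not needed here. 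Your reduction of quasi-transitivity to the single uniform bound $\bar\chi=\max_{v\in\FF}\chi_v(p)<\infty$ via FKG is the right way to decouple the geometry from the analysis. The one step you should not wave through is the site-percolation form of the tree-graph inequality: it is \emph{not} a verbatim consequence of the van den Berg--Kesten inequality, because the path segments meeting at a branch vertex $z$ all use the site $z$ and therefore do not occur disjointly in the BK sense. The standard repair --- decompose using connections in $G\setminus\{z\}$ from the neighbours of $z$, or equivalently use two-point functions that do not require the terminal site to be open, at the cost of a factor $p^{-1}$ per internal vertex of the tree --- only multiplies your bound on $E_p(|\mathcal{C}_0|^k)$ by $p^{-O(k)}$, which is absorbed into $C_0$ for fixed $p>0$ and leaves the radius of convergence of the exponential series positive. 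Since you explicitly flag this as the step requiring care and the fix is routine, I would call this a correct alternative proof rather than a gap; for the paper's actual application (the matching lattice $\L^*$, which is vertex-transitive) everything simplifies further, as you note.
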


Clearly $\L^*$ is quasi-transitive, so this conclusion holds for percolation on $\L^*$. For the dual percolation on $\L^*$ if $p>p^{site}_c(\L)$ there is a  $\gamma^*_p$ that depends on $p$ such that
\beq\label{exptail}
P^*_{1-p}( |\mathcal{C}_0|\geq n) \leq \exp(-\gamma^*_p n).
\eeq

\begin{proof}[Proof of Lemma \ref{fourcross}]
There is no  top-to-bottom crossing of $Q_1$ on $\L$ if (and only if) there is a left-to-right  dual crossing of $Q_1$ on $\L^*$. There are $2N+1$ possible starting points for left-to-right dual crossings of $Q_1$ on $\L^*$. Since a left-to-right dual crossing has path length at least $2N^\eta$, by \eqref{exptail} the probability of having no top-to-bottom crossing of $Q_1$ on $\L$ is at most $(2N+1)\exp(-2\gamma^*_p N^\eta)$.

Repeating the same argument for $Q_2,Q_3,Q_4$ and using a union bound shows the probability that there are  top-to-bottom crossings of $Q_1$ and $Q_3$ and left-to-right crossings of $Q_2$ and $Q_4$ is at least $1-4(2N+1)\exp(-2\gamma^*_p N^\eta))$.
\end{proof}

%The final detail is to specify the initial condition. Let $p_0 \in (p_c^{site}(\ZZ^2),\theta/(\theta+1))$ and let $\zeta^{pr}_0(x)$ be independent with $P(\zeta^{pr}_0(x)=0)=p_0$ and $P(\zeta^{pr}_0(x)=-1)=1-p_0$. If there are no 1's in the initial configuration then the process is doomed to die out, so we flip coins with a small probability of heads to replace a positive fraction of the 0's in $B_{0,0}$ by 1's. As we will show in Section \ref{sec:cpfs}  $(0,0,0)$ is wet with high probability. See Lemma \ref{tauA}. When $m+n=1$ the condition at time $(m+n-1)T=0$ is a product measure so results in Section \ref{sec:spweb} show that with high probability these sites are open. To check that $(m,n,m+n)\in {\cal L}^{NE}$ is open for $m+n>1$ we note that if the initial condition $\zeta^{pr}_t(x)$ is 0 with probability $p_0$ and $-1$ with probability $1-p_0$,  then the fraction of 0's at time $t/\alpha$ satisfies
%$$
%f'(t) = - f'(t) + \theta (1-f(t)), \quad f(0)=p_0.
%$$
%It is easy to see that as $t$ increases from 0 to $\infty$,
%$f(t)$ increases from $p_0$ to $\theta/(\theta+1)$. If we introduce 1's into the initial configuration $\zeta^{pr}_0$ to produce $\zeta_0$ then $\zeta_0 \ge \zeta^{pr}_0$ and we have $\zeta_t \ge \zeta^{pr}_t$ for all $t\ge 0$ and then $(m,n,m+n)$ is open with high probability.

Turning back to our process $\xi_t$, recall that a site is open if it never became $-1$ during $[0,2T]$, where $T=\ep_0/\alpha$. Let $p_0 \in (p_c^{site}(\ZZ^2),\theta/(\theta+1))$ and let $\zeta^{pr}_0(x)$ be independent with $P(\zeta^{pr}_0(x)=0)=p_0$ and $P(\zeta^{pr}_0(x)=-1)=1-p_0$. If there are no 1's in the initial configuration then the process is doomed to die out, so we flip coins with a small probability of heads to replace a positive fraction of the 0's in $B_{0,0}$ by 1's. We will show that for a suitably chosen $\ep_0$, the site $(0,0,0)$ is open with high probability. Consequently, by the discussion in Section \ref{sec:introth3} the sites $(m,n,m+n)$ for $m,n\geq 0$ are open with high probability too.

\begin{corollary}\label{sweb}
Suppose $\theta/(1+\theta)>p_c^{site}(\ZZ^2)$. Let $p_0\in  (p_c^{site}(\ZZ^2),\theta/(\theta+1))$ and $T=\ep_0/\alpha$ where $\ep_0$ is a small constant such that $p_0-p_c^{site}(\ZZ^2)>1-e^{-2\ep_0}$. Starting with the product measure $\zeta^{pr}_0$, the site $(0,0,0)\in {\cal L}^{NE}$ is open with arbitrarily large probability when $N$ is sufficiently large.
\end{corollary}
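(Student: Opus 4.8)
The plan is a monotone-coupling reduction to independent site percolation, followed directly by Lemma~\ref{fourcross}. First I would discard the $1$'s: let $\zeta^{pr}_t$ be the process started from $\zeta^{pr}_0$ in which no $1$ is ever created. Since $\zeta^{pr}_t$ contains no $1$'s, it is the two-state flip chain ($0\to-1$ at rate $\alpha$, $-1\to0$ at rate $\alpha\theta$) run independently at each site of $\ZZ^2$. The actual initial configuration $\zeta_0$ is $\zeta^{pr}_0$ with some $0$'s raised to $1$, so $\zeta^{pr}_0\le\zeta_0$ in the order $-1\le0\le1$; attractiveness then gives $\zeta^{pr}_t\le\zeta_t$ for all $t$, and hence for every $x$
\[
O_x:=\{\zeta^{pr}_t(x)\ne-1\text{ for all }t\in[0,2T]\}\subseteq\{x\text{ is open}\}.
\]
Because an $O$-site carries no $-1$ on all of $[0,2T]$, a crossing of a $Q_i$ by $O$-sites automatically persists through the time window demanded in the definition of openness of $(0,0,0)$. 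So it suffices to produce, with probability tending to $1$, top-to-bottom crossings of $Q_1,Q_3$ and left-to-right crossings of $Q_2,Q_4$ in the site percolation $\{O_x\}$.

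Second, I would identify the parameter of this percolation. The coordinates of $\zeta^{pr}$ evolve independently, so the events $O_x$ are independent. If $\zeta^{pr}_0(x)=-1$ (probability $1-p_0$) then $O_x$ fails at $t=0$; if $\zeta^{pr}_0(x)=0$, then, since the only transition out of state $0$ is the rate-$\alpha$ jump to $-1$, $O_x$ holds precisely when the corresponding rate-$\alpha$ Poisson process has no point in $[0,2T]$, an event of probability $e^{-2\alpha T}=e^{-2\ep_0}$. Hence $P(O_x)=p_0e^{-2\ep_0}=:p$, and since $p_0<1$ the choice of $\ep_0$ forces
\[
p=p_0-p_0\bigl(1-e^{-2\ep_0}\bigr)\ge p_0-\bigl(1-e^{-2\ep_0}\bigr)>p_c^{site}(\ZZ^2),
\]
so $\{O_x\}$ is a supercritical independent site percolation.

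Third, I would apply Lemma~\ref{fourcross} with this $p$: for $N$ large, all four crossings are present with probability at least $1-4(2N+1)\exp(-2\gamma^*_p N^\eta)$, which is exactly the event that $(0,0,0)\in{\cal L}^{NE}$ is open, and this probability tends to $1$ as $N\to\infty$. The extension to every $(m,n,m+n)$ with $m+n\ge1$ claimed just before the corollary is then immediate from the same estimate applied to the law of $\zeta^{pr}_t$ at time $(m+n-1)T$, whose density of $0$'s has only drifted upward toward $\theta/(1+\theta)$, so $P(O_x)$ is only larger in the corresponding window.

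The argument is routine; the one point that needs care is the coupling bookkeeping—one must run the comparison against the $1$-free flip process, which is what makes the events $O_x$ genuinely independent, rather than against $\zeta_t$ itself—together with the observation that the numerical hypothesis on $\ep_0$ is exactly what lifts $p_0e^{-2\ep_0}$ above $p_c^{site}(\ZZ^2)$.
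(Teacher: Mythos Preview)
Your proposal is correct and follows essentially the same route as the paper: reduce to independent Bernoulli site percolation via the two-state $\{0,-1\}$ flip process (sites evolve independently when no $1$'s are present), verify the open probability exceeds $p_c^{site}(\ZZ^2)$, and invoke Lemma~\ref{fourcross}. The only cosmetic difference is that you compute the exact open probability $p_0e^{-2\ep_0}$ whereas the paper uses the union bound $P(\text{closed})\le(1-p_0)+(1-e^{-2\ep_0})$; both yield the same threshold inequality $p_0-(1-e^{-2\ep_0})>p_c^{site}(\ZZ^2)$.
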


\begin{proof}
A site is closed either if it is initially in state $-1$  or if it flips to $-1$ within time $2T$. Given that we start with a product measure where a site is $-1$ with probability $1-p_0$, the probability that a site is closed is at most $q=1-p_0+1-e^{-2\ep_0}$. By the choice of $\ep_0$ we have $p=1-q>p_c^{site}(\ZZ^2)$ and applying Lemma \ref{fourcross} completes the proof.
\end{proof}

\clearp

\subsection{Contact process on a finite set} \label{sec:cpfs}

By definition, the sites on our four crossings never become $-1$ during time $[0,2T]$, so our model restricted to a crossing behaves exactly like a contact process. Since the crossings are self-avoiding paths they are isomorphic to an interval on $\ZZ$. This leads us naturally to study the behavior of the contact process on a finite set. To do this we will use a construction introduced in Section 9 of \cite{RD84} to study oriented percolation, which is essentially the contact process in discrete time.

The first step in doing this is to go back to Durret's 1980 paper on the one dimensional contact process, \cite{RD80}, which defined an edge speed for the one dimensional contact process. Let $\zeta^A_t$ denote the contact process on $\ZZ$ starting from the set $A$ being occupied and 
$$l^A_t=\inf\{ y: \zeta^A_t(y)=1\},\quad \quad r^A_t=\sup\{ y: \zeta^A_t(y)=1\}$$
denote the left and right edge of the process.

\begin{lemma}\label{edgespeed}
If $\lambda > \lambda_{con}(\ZZ)$ then there is a velocity $v(\lambda)>0$ so that
 as $t\to\infty$, 
$$
r_t^{(-\infty,0]}/t \to v(\lambda) \quad\hbox{almost surely}.
$$
\end{lemma}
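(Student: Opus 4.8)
The plan is to establish existence of the edge speed via a subadditivity argument applied to the right edge process $r_t^{(-\infty,0]}$, and then argue positivity using the supercriticality assumption $\lambda > \lambda_{con}(\ZZ)$. First I would set up the right edge of the contact process started from the half-line $(-\infty,0]$ occupied. By attractiveness and translation invariance of the graphical representation, for $s,t \ge 0$ one has the (super)additive relation $r_{t+s}^{(-\infty,0]} \ge r_t^{(-\infty,0]} + (r_s^{(-\infty,0]} \circ \sigma_{t})$ in distribution, where $\sigma_t$ denotes the shift of the graphical representation by $(r_t^{(-\infty,0]},t)$: once the right edge has reached $r_t$, the configuration to the left of it dominates the half-line $(-\infty,r_t]$ occupied, so restarting from there gives an independent copy of the process whose right edge moves by at least $r_s^{(-\infty,0]}$ in additional time $s$. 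This is exactly the structure needed for Kingman's subadditive ergodic theorem, which yields $r_t^{(-\infty,0]}/t \to v(\lambda)$ almost surely and in $L^1$, for some constant $v(\lambda) \in [-\infty,\infty]$; finiteness of $v(\lambda)$ from above follows because $r_t$ is stochastically dominated by a rate-$\lambda$ (times $2d$, here $2$) pure birth–type bound, so $\limsup r_t/t \le C\lambda < \infty$.

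The substantive point is that $v(\lambda) > 0$ when $\lambda > \lambda_{con}(\ZZ)$. Here I would invoke the standard theory of the supercritical one-dimensional contact process: when $\lambda > \lambda_{con}(\ZZ)$, the process started from $(-\infty,0]$ survives with probability one, and moreover the classical results of Durrett \cite{RD80} (and the block-construction refinements) show that the right edge grows linearly with a strictly positive speed. Concretely, one can run a Bezuidenhout–Grimmett / block-construction comparison: supercriticality implies that the process restricted to a large but finite space-time box dominates supercritical oriented percolation, whose rightmost wet site advances at a strictly positive linear rate; transferring this back gives $r_t^{(-\infty,0]} \ge c t$ for some $c>0$ with high probability, hence $v(\lambda) \ge c > 0$. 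Since the excerpt permits citing results stated earlier — and Durrett's 1980 paper \cite{RD80} is explicitly referenced as having "defined an edge speed for the one dimensional contact process" — the cleanest route is to cite that the edge speed exists and equals $\lim r_t/t$, and that it is positive in the supercritical regime, which is a well-known consequence of \cite{RD80} together with the shape theorem for the one-dimensional contact process.

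I expect the main obstacle to be rigor around the superadditivity/restart step: one must be careful that after the right edge reaches $r_t^{(-\infty,0]}$, the graphical-representation data strictly to the right of the space-time point $(r_t^{(-\infty,0]}, t)$ is genuinely independent of the history used to determine $r_t^{(-\infty,0]}$ (this is where one uses that $r_t$ is a stopping-line–type functional and the Markov/strong-Markov property of the graphical construction), and that the configuration at time $t$ in $(-\infty, r_t^{(-\infty,0]}]$ dominates a fully occupied half-line, so monotonicity lets the restarted edge be compared to a fresh copy. Given the level of the paper, I would state this as an application of Kingman's subadditive ergodic theorem citing \cite{RD80} for the edge-speed construction and for positivity in the supercritical case, rather than reproving it, since the lemma is used only as an input to the block construction that follows.
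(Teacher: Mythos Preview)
The paper does not actually prove this lemma: it is stated as a known result from Durrett's 1980 paper \cite{RD80}, with no argument given. Your closing recommendation---to cite \cite{RD80} for both existence and positivity of the edge speed rather than reprove it---is exactly what the paper does, so on that level your proposal matches.

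One technical slip in your sketch is worth flagging. You write that at time $t$ ``the configuration to the left of $r_t$ dominates the half-line $(-\infty,r_t]$ occupied,'' and use this to get the superadditive inequality $r_{t+s}\ge r_t+\tilde r_s$. That domination is false: $\zeta_t^{(-\infty,0]}$ has holes where deaths have occurred, so it is dominated \emph{by} $(-\infty,r_t]$, which only gives $r_{t+s}\le r_t+\tilde r_s$. The correct mechanism in one dimension is the path non-crossing property of the graphical representation: any active path from $(-\infty,r_t]\times\{t\}$ to a point right of $r_t^A$ at time $t+s$ must cross an active path from $\zeta_t^{(-\infty,0]}$, and at the crossing one can splice paths. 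This yields the \emph{equality} $r_{t+s}^{(-\infty,0]}=r_t^{(-\infty,0]}+\tilde r_s$ with $\tilde r_s$ an independent copy, so $r_t$ has stationary independent increments and the strong law (not merely Kingman) gives $r_t/t\to E r_1$; positivity then follows as you say from supercriticality. This is the argument in \cite{RD80}. If you want to include a sketch rather than a bare citation, fix the justification of the additivity step accordingly.
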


Theorem 4 in \cite{RD1983} gives a large deviations result. This and most of the other results we cite here are proved for oriented percolation in \cite{RD84} and can be generalized to the contact process with similar arguments. For this one see Section 11 of \cite{RD84}.

\begin{lemma}\label{thm4}
If $\lambda >  \lambda_{con}(\ZZ)$ then for any $a<v(\lambda)<b$ and $t \ge 0$,
\begin{align*}
P(r^{(-\infty,0]}_t\leq at) & \leq C_0e^{-\gamma_0 t}, \\
P(r^{(-\infty,0]}_t\geq bt) & \leq C_1e^{-\gamma_1 t}.
\end{align*}
where $C_0,\gamma_0$ depend on $a$ and  $C_1,\gamma_1$ depend on $b$.
\end{lemma}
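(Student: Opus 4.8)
The statement to prove is Lemma \ref{thm4}, the large deviations bound for the right edge of the one-dimensional contact process.

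\textbf{Plan of proof.} The plan is to reduce the result to the analogous large-deviations estimates for oriented (site or bond) percolation, which are established in Section 11 of \cite{RD84}, via the standard comparison between the supercritical contact process on $\ZZ$ and supercritical oriented percolation. First I would recall from Lemma \ref{edgespeed} that for $\lambda > \lambda_{con}(\ZZ)$ the edge speed $v(\lambda)$ exists and is strictly positive. The lower-tail bound $P(r^{(-\infty,0]}_t \le at) \le C_0 e^{-\gamma_0 t}$ for $a < v(\lambda)$ is the more substantive half; the upper-tail bound is comparatively soft and I would handle it by a direct first-moment / exponential-martingale argument on the rightmost particle, using that the right edge jumps right at rate at most $\lambda$ and so is stochastically dominated by $Y_t$, a rate-$\lambda$ Poisson process: then $P(r^{(-\infty,0]}_t \ge bt) \le P(Y_t \ge bt) \le C_1 e^{-\gamma_1 t}$ for any $b > \lambda > v(\lambda)$ by a Chernoff bound, and for $v(\lambda) < b \le \lambda$ one interpolates or simply notes monotonicity in $b$ makes the bound for larger $b$ imply nothing — so instead one uses the restart/renewal structure below, which gives it for all $b > v(\lambda)$ uniformly.

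For the lower tail, the key steps are: (1) Fix $a < a' < v(\lambda)$. Using the Bezuidenhout--Grimmett block construction (available since $\lambda > \lambda_{con}$), compare the contact process started from $(-\infty,0]$ with a supercritical oriented percolation on a renormalized lattice, chosen fine enough that the asymptotic speed of the rightmost wet site in the percolation, rescaled back, exceeds $a'$. (2) Invoke the large-deviation estimate for oriented percolation from Section 11 of \cite{RD84}: the rightmost occupied site $r_n$ in supercritical oriented percolation started from a half-line satisfies $P(r_n \le a'n) \le C e^{-\gamma n}$. This is exactly the discrete-time analogue being cited. (3) Translate back through the renormalization: a block-construction wet site at renormalized space-time coordinate $(m,n)$ forces an occupied copy of a fixed box in the true process at macroscopic location $(mL, nT')$, so $r^{(-\infty,0]}_t \ge a' t - O(L)$ on the event that the percolation edge is at least $a' n$ at the corresponding level $n = \lfloor t/T' \rfloor$; absorbing the $O(L)$ error into the gap $a' - a$ for $t$ large and enlarging $C_0$ to cover small $t$ gives the claim.

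\textbf{Main obstacle.} The step I expect to require the most care is (1)--(3): making the comparison between the continuous-time contact process and discrete-time oriented percolation quantitative enough that the \emph{edge speed} of the percolation, not merely survival, transfers — i.e. verifying that one can take the block-construction parameter $\ep$ close enough to $0$ (equivalently the density of wet sites close enough to $1$) that the oriented-percolation edge speed, after rescaling by $L/T'$, is as close to $v(\lambda)$ as desired. This is precisely the content that \cite{RD84} develops for oriented percolation and that the present paper asserts ``can be generalized to the contact process with similar arguments''; in the write-up I would either cite the corresponding statement in \cite{RD84} Section 11 directly (since the contact process edge-speed large deviations are stated there) or sketch the renormalization and defer the routine verification. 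The upper-tail bound, by contrast, is essentially immediate from Poisson tail estimates and is not the difficulty.
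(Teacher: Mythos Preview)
The paper does not prove this lemma at all: it simply cites it as Theorem~4 of \cite{RD1983} and remarks that the method is that of Section~11 of \cite{RD84}, carried out directly for the contact process. So your task was essentially to reconstruct a known result, and there the comparison is between your route and the original one.

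Your upper-tail argument has a real gap that you notice but do not close. The Poisson domination $r^{(-\infty,0]}_t \le Y_t$ with $Y_t$ Poisson of rate $\lambda$ only yields exponential decay for $b>\lambda$, and you correctly observe that monotonicity goes the wrong way to extend this to $v(\lambda)<b\le\lambda$. The clean argument is subadditivity: from the graphical representation, $r^{(-\infty,0]}_{s+t}\le r^{(-\infty,0]}_s+\tilde r_t$ with $\tilde r_t$ an independent copy, so for any fixed $N$ one has $r_{kN}\le\sum_{j=1}^k \tilde r_N^{(j)}$ i.i.d.; since $E[r_N]/N\to v(\lambda)$ one can pick $N$ with $E[r_N]<bN$, and a Chernoff bound on the i.i.d.\ sum (finite exponential moments coming from the Poisson comparison) gives the exponential tail for all $b>v(\lambda)$. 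Your ``restart/renewal structure'' gesture is in this direction but is not a proof as written.

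The lower-tail plan has a more serious problem. The Bezuidenhout--Grimmett construction produces, for each $\ep>0$, scales $L=L(\ep)$ and $T'=T'(\ep)$ so that a block event has probability $\ge 1-\ep$; the rescaled edge speed you recover is at most $(L/T')\cdot v_{perc}(\ep)$. As $\ep\downarrow 0$ one has $v_{perc}(\ep)\to 1$, but there is no mechanism in BG that forces $L/T'$ to approach $v(\lambda)$---indeed, driving $\ep$ to zero typically requires $T'$ large relative to $L$, pushing $L/T'$ the wrong way. So the assertion ``chosen fine enough that the \dots\ speed \dots\ exceeds $a'$\,'' is exactly the unproved step, and it is not a routine verification. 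The construction that \emph{does} track the edge speed is the parallelogram construction with slope $1/v(\lambda)$ (cf.\ Lemma~\ref{renpath} in this paper), but note that Lemma~\ref{renpath} itself invokes Lemma~\ref{thm4}, so you cannot use it here without circularity. The original proofs in \cite{RD1983} and Section~11 of \cite{RD84} avoid this entirely by arguing directly for the contact process (respectively oriented percolation) via subadditivity and a restart argument, with no reduction from one model to the other; that is the route the paper is pointing to, and it is both shorter and free of the speed-matching obstacle.
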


The final ingredient is

\begin{lemma} \label{tauA}
Let $\tau_A = \inf\{ t : \zeta^A_t \equiv 0 \}$. If $\lambda >  \lambda_{con}(\ZZ)$ there is a $\gamma_2>0$ so that
$$
P( \tau^A < \infty ) \le \exp(-\gamma_2 |A|).
$$
\end{lemma}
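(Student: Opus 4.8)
The plan is to deduce Lemma~\ref{tauA} from the block construction for the supercritical one‑dimensional contact process together with a Peierls (contour) estimate, exactly as in the oriented‑percolation results of \cite{RD84}. In the applications in this paper $A$ will always be an interval, so that is the case I will emphasize.

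First I would reduce the statement to a property of the upper invariant measure by self‑duality. The contact process is self‑dual, so for finite $A$ and finite $B$ we have $P(\zeta^A_t\cap B\neq\emptyset)=P(\zeta^B_t\cap A\neq\emptyset)$; letting $B\uparrow\ZZ$ gives $P(\zeta^A_t=\emptyset)=P(\zeta^{\ZZ}_t|_A\equiv 0)$, where $\zeta^{\ZZ}_t$ is the process started from all sites occupied. Since $\{\zeta^A_t=\emptyset\}$ increases to $\{\tau_A<\infty\}$ as $t\uparrow\infty$ while $\zeta^{\ZZ}_t$ decreases to the upper invariant measure $\bar\nu$, letting $t\to\infty$ yields
$$
P(\tau_A<\infty)=\bar\nu\big(\zeta|_A\equiv 0\big),
$$
so it suffices to show that the supercritical contact process on $\ZZ$ has no large vacant sets in equilibrium, namely $\bar\nu(\zeta|_A\equiv 0)\le\exp(-\gamma_2|A|)$ for every finite $A$.

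Next I would invoke the block construction. Since $\lambda>\lambda_{con}(\ZZ)$, the Bezuidenhout--Grimmett comparison (as presented in Section~2 of \cite{Liggett99}, and for oriented percolation in \cite{RD84}) produces integers $L,T$ — which we are free to take large, so that $\ep_0$ below is as small as we please — and a coupling under which $\zeta^{\ZZ}_t$ dominates a $1$‑dependent oriented site percolation on a renormalized lattice in which a renormalized site is ``occupied'' with density at least $1-\ep_0$, ``occupied'' at $(m,n)$ forces at least one $1$ of $\zeta_{nT}$ in the block interval $2mL+[-L,L]$, and the usual ``occupied plus local good event implies neighbors occupied'' relation holds. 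Passing to the stationary regime, $\bar\nu$ dominates the upper stationary measure of this percolation. If $A$ is an interval of length $n$ it meets $\sim n/(2L)$ of the block intervals, and on $\{\zeta|_A\equiv 0\}$ none of the corresponding renormalized sites can be reached from $-\infty$; a standard planar‑duality argument then produces a dual contour of closed renormalized sites of length at least $\sim n/(2L)$. Since the number of contours of length $r$ is at most $C^r$ and, by the $1$‑dependence and the density bound, each occurs with probability at most $(C\ep_0)^{cr}$, choosing $\ep_0$ small makes $\sum_{r\ge cn/L}C^r(C\ep_0)^{cr}\le\exp(-\gamma_2 n)$. This is the contact‑process analogue of the oriented‑percolation estimate in \cite{RD84}.

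For a general finite $A$ one cannot simply replace $A$ by the interval spanning it (attractiveness runs the wrong way), but $A$ can be split into clumps at pairwise distance larger than a fixed $d$; clumps this far apart are essentially independent over the time scale on which a doomed clan persists — which is bounded using the edge‑speed large‑deviation estimate of Lemma~\ref{thm4} — so the contributions multiply and one again gets exponential decay in $|A|=\sum(\text{clump sizes})$; equivalently one may use that $\bar\nu$ has exponentially decaying correlations (again a consequence of the block construction) in the identity above. The only real work is the Peierls estimate for the $1$‑dependent percolation, and the point there is simply to tune the block‑construction parameters so its density $1-\ep_0$ is close enough to $1$; the separated‑clump bookkeeping for non‑interval $A$ is routine and, in any case, not needed for the applications, where $A$ is an interval.
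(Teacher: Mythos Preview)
The paper does not prove this lemma: it simply refers to Section~10 of \cite{RD84} for the oriented--percolation analogue and leaves the transfer to the contact process implicit. Your sketch is therefore more detailed than anything in the paper, and its core --- compare with high--density $1$--dependent oriented percolation via the block construction and then run a Peierls/contour count --- is exactly the mechanism behind the cited result.

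Two comments on the route you chose. First, the duality step $P(\tau_A<\infty)=\bar\nu(\zeta|_A\equiv 0)$ is correct and elegant, but it is an extra layer: in \cite{RD84} one works directly with the process started from $A$ and argues that extinction forces a dual contour of ``closed'' renormalized sites of length $\gtrsim |A|/L$ around the space--time cluster, with no passage to equilibrium. Your version requires running the block construction on a two--sided graphical representation so that ``reached from $-\infty$'' makes sense; this can be done, but you should say so rather than just writing ``passing to the stationary regime''. Second, the domination you want for the contour count goes the right way (wet renormalized site $\Rightarrow$ a $1$ in the corresponding block, hence $\zeta|_A\equiv 0$ $\Rightarrow$ all overlapping renormalized sites are dry), so the Peierls bound is legitimate once the stationary coupling is in place.

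Your treatment of general finite $A$ is, as you acknowledge, hand--wavy; the ``separated clumps are essentially independent'' step is not routine without a concrete decoupling estimate. Since every use of Lemma~\ref{tauA} in the paper is for an interval (the crossings $\sigma^i$ mapped to $[0,\ell]$), this is harmless here, and you are right to flag it and move on.
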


\mn
See Section 10 of \cite{RD84} for a proof  for oriented percolation. The construction in Section 9 of \cite{RD84} compares the contact process with $M$-dependent oriented site percolation on ${\cal L}$. See Figure 7 in the paper. 

For the contact process we say there exists an \textit{open path} from $(x,s)$ to $(y,t)$, i.e., $(x,s)\to (y,t)$, if there is a path in the graphical representation leading from $(x,s)$ to $(y,t)$ that only moves up, crosses arrows in the direction of their orientation and avoids $\bullet_1$'s.
The first step is to prove that we can get open paths in long thin parallelograms where the sides have slope $1/v(\lambda)$. To do this we will follow the approach in Cristali, Junge, and Durrett \cite{CJD} who studied an inhomogeneous oriented percolation. The parallelogram $\Gamma$ (see Figure \ref{renorm}) has vertices
\begin{align*}
u_0 = (-1.5\delta L,0), \qquad& u_1 = ((1+1.5\delta)L, (1+3\delta)L/v(\lambda)), \\
v_0 = (-0.5\delta L,0), \qquad& v_1 = ((1+2.5\delta)L, (1+3\delta)L/v(\lambda)). 
\end{align*}

In addition we have two intervals $[x_0,y_0]$ and $[x_1,y_1]$ where
\begin{align*}
x_0 = (-1.1\delta L,0), \qquad& x_1 = ((1+1.75\delta)L, (1+3\delta)L/v(\lambda)), \\
y_0 = (-0.9\delta L,0), \qquad& y_1 = ((1+2.25\delta)L, (1+3\delta)L/v(\lambda)). 
\end{align*}

\begin{figure}[ht]
\begin{center}
\begin{picture}(300,220)
\put(30,30){\line(1,1){160}}
\put(110,30){\line(1,1){160}}
\put(30,30){\line(1,0){80}}
\put(190,190){\line(1,0){80}}
\put(185,90){$\leftarrow$ slope $1/v(\lambda)$}
\put(285,185){$t_L$}
\put(130,25){$0$}
\put(25,20){$u_0$}
\put(105,20){$v_0$}
\put(62,25){\line(0,1){10}}
\put(78,25){\line(0,1){10}}
\put(56,15){$x_0$}
\put(74,15){$y_0$}
\put(185,195){$u_1$}
\put(265,195){$v_1$}
\put(205,185){\line(0,1){10}}
\put(245,185){\line(0,1){10}}
\put(200,200){$x_1$}
\put(240,200){$y_1$}
\end{picture}
\caption{The parallelogram $\Gamma$.}
\label{renorm}
\end{center}
\end{figure}
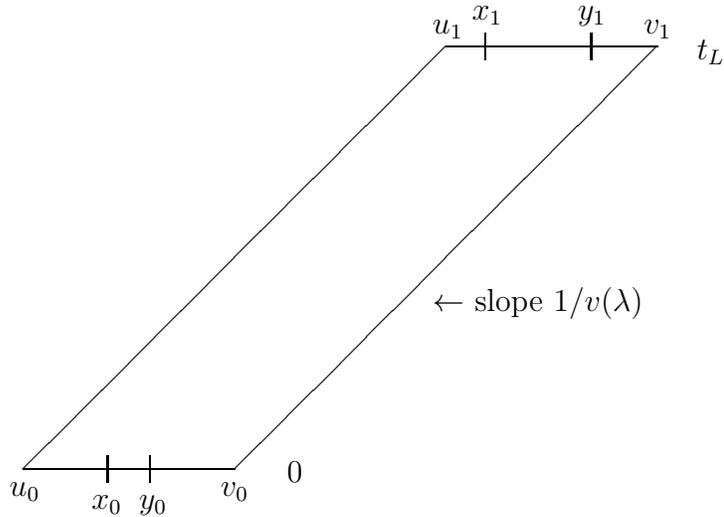

\mn

\begin{lemma} \label{renpath}
Suppose $\lambda > \lambda_{con}(\ZZ)$ and $\delta>0$ is a small constant. There exist finite constants $ \gamma_3, C_3>0$ so that the probability that there is an open path from $[x_0,y_0]\times\{0\}$ to $[x_1,y_1]\times\{ (1+3\delta)L/v(\lambda)\}$ that lies in $\Gamma$ in the contact process is at least $ 1 - C_3 \exp(-\gamma_3 L)$.
\end{lemma}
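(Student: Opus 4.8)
The plan is to build the open path in $\Gamma$ out of $O(1/\delta)$ overlapping short parallelograms, each of which is handled by the edge-speed large deviation bound of Lemma \ref{thm4}, and then to chain these pieces together with a union bound. First I would subdivide the time interval $[0,(1+3\delta)L/v(\lambda)]$ into $\lceil 1/\delta\rceil$ (say) equal sub-intervals of length $\Theta(\delta L/v(\lambda))$; over each sub-interval the ``nominal'' spatial displacement of a rightward-moving front is $\Theta(\delta L)$. For each sub-interval I would look at the contact process restricted to a spatial window of width $\Theta(\delta L)$, started from all sites occupied on a short interval $[x_i,y_i]$ (of width $0.2\delta L$) at the bottom of the strip, and ask that (i) the process does not die out inside the strip, and (ii) the left and right edges of the surviving cluster stay within the prescribed window, i.e.\ the right edge has advanced by at least the amount needed to reach $[x_{i+1},y_{i+1}]$ but not so much as to exit $\Gamma$ on the right, and symmetrically on the left. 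Each of these is exactly an edge-speed event: by Lemma \ref{thm4} applied with $a,b$ chosen just below/above $v(\lambda)$ (the slack is governed by $\delta$), the probability that the right edge of the one-sided process $r^{(-\infty,0]}_t$ falls outside $[at,bt]$ over a time of order $\delta L$ is at most $C e^{-\gamma \delta L}$, and survival of the contact process started from $\Theta(\delta L)$ occupied sites fails with probability at most $e^{-\gamma_2 \cdot \Theta(\delta L)}$ by Lemma \ref{tauA}.

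The geometry is arranged so that the pieces fit: the outer parallelogram $\Gamma$ (between the lines through $u_0u_1$ and $v_0v_1$) has width $2\delta L$ at every height and slope exactly $1/v(\lambda)$, while the inner ``target'' intervals $[x_i,y_i]$ sit strictly inside, at distance $\Theta(\delta L)$ from each wall of $\Gamma$; so a front that advances at speed within $[v(\lambda)-c\delta, v(\lambda)+c\delta]$ for a suitable small $c$ over a block of height $\Theta(\delta L/v(\lambda))$ both reaches the next target interval and stays inside $\Gamma$. I would make this precise by noting that after such a block the right edge lies in an interval of length $O(\delta^2 L)\ll 0.2\delta L$, which can be placed to contain $[x_{i+1},y_{i+1}]$; similarly the left edge (run the same estimate for $\ell^{[0,\infty)}_t$, using the symmetric statement of Lemma \ref{thm4}) stays to the left. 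Crucially, since we restrict attention to the strip $\Gamma$, I would use the truncated/restricted contact process and the finite-range-of-dependence construction of Section 9 of \cite{RD84}: the event controlling block $i$ depends only on the graphical representation inside block $i$ (plus a bounded overlap region), so the blocks are $M$-dependent and a plain union bound over the $O(1/\delta)$ blocks is legitimate. Summing $O(1/\delta)$ terms each of size $C e^{-\gamma \delta L}$ gives a bound of the form $(C/\delta)e^{-\gamma\delta L}$; absorbing the polynomial and the $\delta$-dependence into the constants (for $L$ large relative to $1/\delta$, which is fine since $\delta$ is fixed first) yields $C_3 e^{-\gamma_3 L}$ as claimed.

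To assemble the actual open \emph{path} rather than just a sequence of ``good'' blocks, I would observe that on the intersection of the good events there is, in each block, an open path from $[x_i,y_i]\times\{t_i\}$ to $[x_{i+1},y_{i+1}]\times\{t_{i+1}\}$ lying in that block (this is precisely what survival-plus-edge-control inside a strip gives, via the Section 9 construction of \cite{RD84}); concatenating these at the common horizontal slices $[x_i,y_i]\times\{t_i\}$ produces a single open path from $[x_0,y_0]\times\{0\}$ to $[x_1,y_1]\times\{(1+3\delta)L/v(\lambda)\}$ inside $\Gamma$. This is the approach of Cristali, Junge, and Durrett \cite{CJD} for inhomogeneous oriented percolation, adapted to the homogeneous continuous-time setting, and I would cite that paper for the detailed bookkeeping.

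\textbf{Main obstacle.} The routine part is the per-block probability estimate, which is a direct application of Lemmas \ref{thm4} and \ref{tauA}. The delicate point is the deterministic geometry: one must choose the speeds $a<v(\lambda)<b$, the block height, and the widths of the walls of $\Gamma$ versus the target intervals $[x_i,y_i]$ consistently so that (a) the allowed drift per block is small enough that the cumulative wandering never leaves $\Gamma$, yet (b) it is large enough (and the target intervals wide enough) that a front moving at a speed in $[a,b]$ is guaranteed to land in the next target interval, all while (c) the overlaps between consecutive blocks are large enough to make the concatenation of open paths valid but small enough to preserve the finite range of dependence. Getting all these inequalities to hold simultaneously with the specific fractions ($1.5\delta$, $1.1\delta$, $0.9\delta$, $0.5\delta$, etc.) in the statement is the only real work; once the constants are pinned down the probabilistic estimates close immediately.
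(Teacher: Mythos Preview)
Your approach is workable but takes a genuinely different route from the paper's, and it is considerably more elaborate than necessary. The paper does \emph{not} subdivide the parallelogram into $O(1/\delta)$ blocks; it handles $\Gamma$ in one shot with three estimates. First, survival: by Lemma \ref{tauA}, $P(\tau^{[x_0,y_0]}<\infty)\le\exp(-\gamma_2\cdot 0.2\delta L)$, so with high probability there is some open path from $[x_0,y_0]\times\{0\}$ up to time $t_L$. Second, endpoint: a direct computation shows the lines $x_0\to x_1$ and $y_0\to y_1$ have slopes strictly above and below $1/v(\lambda)$ respectively, so a single application of Lemma \ref{thm4} forces the surviving path to land in $[x_1,y_1]$. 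Third, containment: rather than controlling the path block by block, the paper argues that if the path ever touches the left wall of $\Gamma$ at some time $s$ and still ends to the right of $x_1$, then over $[s,t_L]$ it must have average speed strictly exceeding $v(\lambda)$; Lemma \ref{thm4} bounds this by $Ce^{-\gamma(t_L-s)}$, and a separate elementary Poisson-jump estimate handles the case $s\in[t_L-t_1,t_L]$ where $t_L-s$ is too small for the large-deviation bound to bite. The right wall is symmetric.

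Your block-chaining approach buys robustness (it is what one needs when the environment is inhomogeneous, as in \cite{CJD}), but here the homogeneity makes the one-shot argument strictly simpler. There is also a genuine subtlety in your concatenation step that you gloss over: the good event in block $i$ is phrased for the process ``started from all sites occupied on $[x_i,y_i]$'', but after block $i-1$ you do not have $[x_i,y_i]$ fully occupied---you have whatever random configuration the process left there. To make the chain close you need either that every site of $[x_{i+1},y_{i+1}]$ is reachable from $[x_i,y_i]$ within block $i$ (so you can trace the final path backwards), or a coupling with the process started from all of $\ZZ$ via the interval property $\zeta^{[x_i,y_i]}_t=\zeta^{\ZZ}_t$ on $[\ell_t,r_t]$. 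This is exactly the bookkeeping that Section 9 of \cite{RD84} carries out, but it is not automatic from ``an open path exists in each block''. Similarly, invoking Lemma \ref{thm4} for the process ``restricted to a spatial window'' is not immediate, since that lemma is stated for the process on all of $\ZZ$; you would need to argue first that the unrestricted right edge stays inside the window, which is again the containment issue. None of this is fatal, but the paper's direct argument sidesteps all of it.
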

\begin{proof} By Lemma \ref{tauA} we have
$$
P( \tau^{[x_0,y_0]} < \infty) \le \exp(-\gamma_2(0.2 \delta L)).
$$
That is, with probability at least $1-\exp(-\gamma_2(0.2 \delta L))$ there is a path from $[x_0,y_0]\times\{0\}$ up to time $t_L=(1+3\delta)L/v(\lambda)$. 

The lines from $(x_0,0)$ to $(x_1,t_L)$ and 
from $(y_0,0)$ to $(y_1,t_L)$ have slopes
$$
\frac{(1+2.86\delta)}{(1+3\delta) v(\lambda)}
\quad\hbox{and}\quad 
\frac{(1+3.15\delta)}{(1+3\delta) v(\lambda)},
$$
so Lemma \ref{thm4} implies that the probability for a path starting in $[x_0,y_0]$ to end outside of $[x_1,y_1]$ is $ \le C\exp(-\gamma L)$ for some $C,\gamma>0$.

The final detail is to show that the path cannot escape from $\Gamma$. Intuitively this is true since if the path hits the left edge of $\Gamma$ at some point then it has to travel at an average speed larger than $ v(\lambda)$ later to end up to the right of $ x_1$. The error probabilities depend on $t$ so we first have to rule out hitting the left edge of $\Gamma$ at a time close to $t_L$, say, between time $t_L-(x_1-u_1)/2\lambda$ and $t_L$.  Let $E_\ell(s,t)$ be the event that a path goes from the left edge of $\Gamma$ between time $[s,t]$ to the right of $x_1$ at time $t_L$.

Let $t_1=(x_1-u_1)/2\lambda$. For $E_\ell(t_L-t_1,t_L)$ to occur, the open path has to make at least $x_1-u_1=0.25\delta L$ jumps to the right within $t_1$ units of time. The fastest way for this to happen is $x_1-u_1$ consecutive jumps, each taking an i.i.d. random time $e_i$ following an exponential distribution with rate $\lambda$. Then large deviation implies that
$$P(E_\ell(t_L-t_1,t_L))\leq C'\exp( -\gamma' \delta L)$$
for some $C',\gamma'>0$. 

Let $a_0$ denote the slope of the line from $(u_0,0)$ to $(x_1,t_L)$ and note that $a_0<1/v(\lambda)$. For the event $E_\ell(0,t_L-t_1)$ to occur, there must be an open path starting from the left side of $\Gamma$ with time duration $t\geq t_L-t_1$ that satisfies $r_t\geq t/a_0$. Hence by Lemma \ref{thm4},
$$
P(E_\ell(0,t_L-t_1)) \le  C_0(a_0)\exp(- \gamma_0(a_0)(t_L-t_1)).
$$
Since $P(E_\ell(0,t_L))\leq P(E_\ell(0,t_L-t_1))+P(E_\ell(t_L-t_1,t_L))$ we have obtained an upper bound on $P(E_\ell(0,t_L))$. Touching the right side of $\Gamma$ can be handled in the same way. Combining all the error probabilities completes the proof.

% work backwards from $(x_1,t_L)$. The leftmost dual path starting from that site stays to the left of the ``bounding curve'' that jumps from $m$ to $m-1$ at rate $\lambda$. Let $t_k$ be the time the boundary curve takes to go from $x_1$ to $u_1$. Large deviations implies that 
%$$
%P( t_k > (x_1-u_1)/2\lambda) \le C\exp(-\gamma (x_1-u_1))
%$$
%
%Let $E_\ell(t)$ be the event a path goes from the left edge of $\Gamma$ at time $t$ to $\ge x_1$ at time $t_L$. By Lemma \ref{thm4} for any time $0 \le t \le t_l-t_k$
%$$
%P(E_\ell(t)) \le C_0(a_0)\exp(- \gamma_0(a_0)(t_L-t_k))
%$$
%where $a_0$ is the slope of the line from $(u_0,0)$ to $(x_1,t_L)$. Touching the right side of $\Gamma$ can be handled in the same way and the proof is complete.
\end{proof}

\subsection{Renormalized lattice} \label{sec:renorm}

Let $(c_m,d_n) = (mL,n(1+\delta)L/v(\lambda))$ be the vertices of the renormalized lattice, where $L$ depends on $N$ and will be specified later.  Let $\Gamma_{m,n} = (c_m,d_n) + \Gamma$, $\hat\Gamma = - \Gamma$,
and  $\hat\Gamma_{m,n} = (c_m,d_n) + \hat\Gamma$. The $\Gamma_{m,n}$ will be called \textit{tubes}. The tube $\Gamma_{0,0}$  is said to be \textit{open} if it contains an open path in the contact process described in Lemma \ref{renpath}. The openness of $\Gamma_{m,n}$ is defined via translation. 

The first coordinates of $u_m$ and $v_m$ are
$$
u^0_m =mL - 1.5\delta L \quad\hbox{and}\quad v^0_m = mL - 0.5\delta L
$$
so cutting $\Gamma_{m,n}$ at height $d_n + (1+\delta)L/v(\lambda)$ gives an interval
$$
[(m+1)L -0.5\delta L,(m+1)L + 0.5\delta L],
$$
so it fits right between $\Gamma_{m+1,n+1}$ and  $\hat\Gamma_{m+1,n+1}$, see Figure \ref{fig:block}. Thus a crossing in $\Gamma_{m,n}$ intersects a crossing in $\hat\Gamma_{m+1,n+1}$ which intersects a crossing in $\Gamma_{m+1,n+1}$. Let the box associated with $(c_m,d_n)$ be
$$
B_{m,n} = [mL - (1+3\delta)L, mL + (1+3\delta)L]
\times [d_n, d_n+ (1+3\delta)L/v(\lambda)].
$$

\begin{figure}[h]
\begin{center}
\begin{picture}(240,260)
% origin
\put(95,20){$u_m$}
\put(112,20){$v_m$}
\put(110,30){\line(1,2){80}}
\put(120,30){\line(1,2){80}}
\put(110,30){\line(1,0){10}}
\put(150,77){$\Gamma_{m,n}$}
\put(265,27){$d_n$}
\put(190,190){\line(1,0){10}}
\put(215,187){$d_n + (1+3\delta)L/v(\lambda)$}
\put(130,30){\line(-1,2){80}}
\put(140,30){\line(-1,2){80}}
\put(130,30){\line(1,0){10}}
% left of origin
\put(10,30){\line(1,2){80}}
\put(20,30){\line(1,2){80}}
\put(10,30){\line(1,0){10}}
\put(30,30){\line(-1,2){30}}
\put(40,30){\line(-1,2){30}}
\put(30,30){\line(1,0){10}}
% right of box
\put(230,30){\line(-1,2){80}}
\put(240,30){\line(-1,2){80}}
\put(230,30){\line(1,0){10}}
\put(210,30){\line(1,2){30}}
\put(220,30){\line(1,2){30}}
\put(210,30){\line(1,0){10}}
% labels
\put(22,28){$\star$}
\put(122,28){$\star$}
\put(222,28){$\star$}
\put(172,148){$\star$}
\put(72,148){$\star$}
\put(110,145){$B_{m,n}$}
\put(215,147){$d_n + (1+\delta)L/v(\lambda)$}
%upper left
\put(80,150){\line(-1,2){40}}
\put(90,150){\line(-1,2){40}}
\put(80,150){\line(1,0){10}}
\put(80,150){\line(1,0){10}}
\put(160,150){\line(1,0){10}}
\put(160,150){\line(1,2){40}}
\put(170,150){\line(1,2){40}}
% upper right
\put(180,150){\line(-1,2){40}}
\put(190,150){\line(-1,2){40}}
\put(180,150){\line(1,0){10}}
\put(60,150){\line(1,0){10}}
\put(60,150){\line(1,2){40}}
\put(70,150){\line(1,2){40}}
\put(125,240){$\hat\Gamma_{m+1,n+1}$}
\put(195,240){$\Gamma_{m+1,n+1}$}
%  box
\linethickness{0.5mm}
\put(50,190){\line(1,0){150}}
\put(50,190){\line(0,-1){160}}
\put(50,30){\line(1,0){150}}
\put(200,30){\line(0,1){160}}
\end{picture}
\caption{Picture of the block construction. Stars mark points of the renormalized lattice. }
\label{fig:block}
\end{center}
\end{figure}
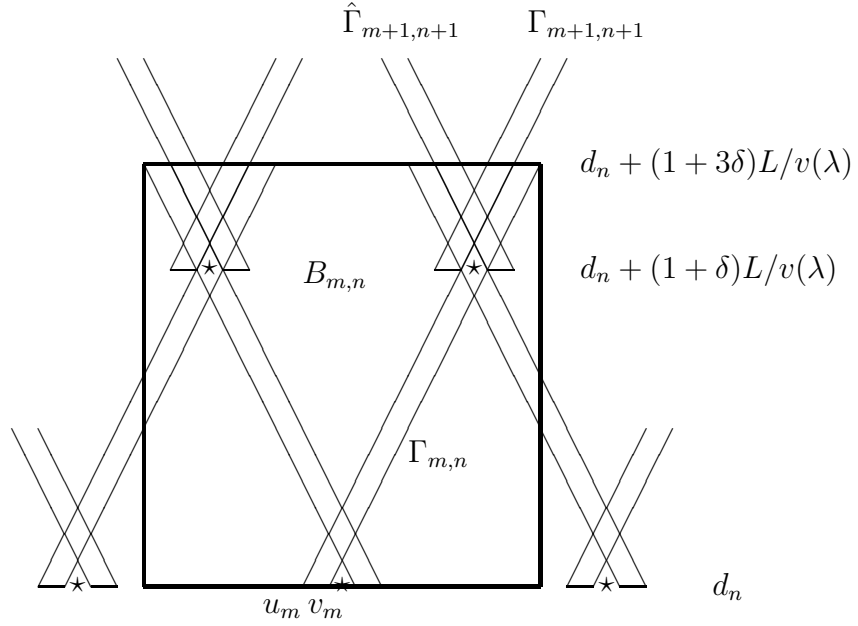

An initial configuration of 1's and 0's on a crossing of length $\ell\ge 2N$ is said to be \textit{$\ep$-good} if when we map this configuration to the interval $[0,\ell]$ on $\ZZ$ and put 0's outside $[\ep N, \ell-\ep N]$ the contact process survives with probability at least $1-\ep$. 
Our next goal is to prove

\begin{lemma}\label{transmission}
Given $\ep>0$, for any small $\alpha$ if we pick $N$ large then
starting with an $\ep$-good configuration on a vertical crossing $\sigma^1$ in $Q_1$, there will be an $\ep$-good configuration on a horizontal crossing $\sigma^2$ in $Q_2$ and an $\ep$-good configuration on a vertical crossing $\sigma^3$ in $Q_3$ at time $T=\ep_0/\alpha$ with probability $\geq 1-2\varepsilon$.
\end{lemma}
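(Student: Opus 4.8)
\emph{Plan.} The strategy is to discard the $-1$'s that never touch the crossings, reduce to the one–dimensional contact process running on the graph $G:=\sigma^1\cup\sigma^2\cup\sigma^3$, and then push a supercritical seed from $\sigma^1$ onto $\sigma^2$ and onto $\sigma^3$ using the renormalized construction of Lemma~\ref{renpath}. Throughout, $\lambda>\lambda_{con}(\ZZ)$ is in force, so the edge speed $v(\lambda)>0$ and Lemmas~\ref{thm4}, \ref{tauA}, \ref{renpath} are available.

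First I would fix open crossings $\sigma^2\subseteq Q_2$ and $\sigma^3\subseteq Q_3$, which exist because the block is open, and record the geometry. Each $\sigma^i$ is a self–avoiding path of sites that are never in state $-1$ during $[0,2T]$, so on $[0,2T]$ the configuration on $G$ takes values only in $\{0,1\}$; since births arriving at $G$ from outside only help and no $-1$ ever sits on $G$, the restriction of $\xi_t$ to $G$ dominates the contact process with births allowed only along the edges of $G$, started from the given configuration on $\sigma^1$ and all $0$'s elsewhere, and each $\sigma^i$ is isomorphic to an interval $[0,\ell_i]\subseteq\ZZ$ with $2N\le\ell_i\le 4N^{1+\eta}$. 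Because $\sigma^1$ crosses $Q_1$ top–to–bottom and $\sigma^2$ crosses $Q_2$ left–to–right, their restrictions to the overlap square $Q_1\cap Q_2$ contain, respectively, a top–to–bottom and a left–to–right crossing of that square, and two perpendicular crossings of a square in $\ZZ^2$ must intersect; thus $\sigma^1\cap\sigma^2$ contains a site $z_{12}$, and since its $y$–coordinate is $\approx N/3$ while $Q_1,Q_2$ reach $\pm N$, $z_{12}$ lies at arc–distance $\ge 2N/3>\ep N$ from every endpoint of $\sigma^1$ and of $\sigma^2$. The same argument produces $z_{23}\in\sigma^2\cap\sigma^3$ interior to both crossings in the same sense.

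Next: by definition of $\ep$–goodness the contact process on $\ZZ$ started from the given configuration, zeroed outside $[\ep N,\ell_1-\ep N]$, survives with probability $\ge1-\ep$; via the comparison with supercritical oriented percolation (Section~9 of \cite{RD84}), on this event and after an $O(L)$ delay the process on $\sigma^1$ carries a \emph{good} configuration, meaning one with at least $M$ ones in a window of length $0.2\delta L$, where $M$ is fixed so that $\exp(-\gamma_2 M)\le\ep$ (hence a good configuration is $\ep$–good by Lemma~\ref{tauA}). Call this event $S_1$, so $P(S_1)\ge1-\ep$. On $S_1$ I run the renormalized construction of Section~\ref{sec:renorm} along $\sigma^1$, then along $\sigma^2$, then along $\sigma^3$: tiling each branch by the slope–$1/v(\lambda)$ parallelograms $\Gamma_{m,n}$ of side $L=L(N)$, Lemma~\ref{renpath} (per–parallelogram failure $\le C_3e^{-\gamma_3 L}$) together with the matching geometry of the $\Gamma_{m,n}$ shows the process dominates a supercritical $M$–dependent oriented percolation that transports good configurations at speed $v(\lambda)$ and, started from a good seed, survives far longer than $T$ (a supercritical oriented percolation on a strip of renormalized width $\ge 2N/L$ survives for time $\exp(\Omega(N/L))\gg T$). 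Hence a good window reaches $z_{12}$ by time $\ell_1/v(\lambda)\cdot(1+O(\delta))$; as $z_{12}\in\sigma^2$ and the flood reoccupies $z_{12}$ for a positive fraction of time, the process on $\sigma^2$ ignites with high probability after a further $O(1)$ time; the construction along $\sigma^2$ then carries a good window past $z_{23}$ within $\ell_2/v(\lambda)\cdot(1+O(\delta))$ more, which ignites $\sigma^3$, and a last $O(1)$ time deposits $\ge M$ ones near $z_{23}$. Since $z_{12}$ and $z_{23}$ lie in the middle thirds of $\sigma^2$ and $\sigma^3$ these good configurations sit in the intervals named in the definition of $\ep$–goodness, and the whole cascade terminates by time $\le C\,N^{1+\eta}/v(\lambda)$ for an absolute constant $C$; we arrange this to be $\le T=\ep_0/\alpha$ by taking $N$ large but with $\alpha N^{1+\eta}$ small, which is possible for every small $\alpha$ since the resulting upper bound on $N$ tends to $\infty$ as $\alpha\to0$. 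All three percolations are still alive at time $T<2T$, so at time $T$ the middle thirds of $\sigma^2$ and of $\sigma^3$ carry good, hence $\ep$–good, configurations. Adding the errors — $\ep$ from $S_1$, then $O(N^{1+\eta}/L)\cdot C_3e^{-\gamma_3 L}$ from the tubes plus the $O(1)$ ignition errors — and taking $L=L(N)\to\infty$ (e.g.\ $L=\lceil N^{\eta/2}\rceil$) so the second group is $o(1)$, bounds the total failure probability by $2\ep$ once $N$ is large.

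\emph{Main obstacle.} The crux is transferring the flood across the junctions $z_{12}$ and $z_{23}$: a single site must hand the process from one crossing to the perpendicular one, and a single attempt succeeds with positive but not high probability. As in every block construction of this type this is resolved by persistence — the flood on the incoming crossing is a locally supercritical contact process that reoccupies the junction over a long time window, giving the outgoing crossing many essentially independent chances to ignite, so each junction is crossed with probability $\ge1-\ep$; equivalently, a contact process on an interval fed by an interior source that is on for a positive density of times survives with high probability. The remainder is bookkeeping: coordinating the three scales $L\ll N\ll N^{1+\eta}\lesssim1/\alpha$ — which is why $N$ must be chosen as a slowly growing function of $1/\alpha$ rather than arbitrarily large for fixed $\alpha$ — and verifying that the block events here still have a finite range of dependence, so the outer comparison in Section~\ref{sec:introth3} applies.
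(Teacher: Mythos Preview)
Your approach is essentially the paper's: reduce to the contact process on the union of crossings, tile each crossing with the slope-$1/v(\lambda)$ parallelograms of Lemma~\ref{renpath}, push the seed along $\sigma^1$ to the junction, transfer to $\sigma^2$ by repeated independent trials at the junction site, and iterate to $\sigma^3$; the paper does exactly this, choosing $L=N^\kappa$ and phrasing the renormalized structure as a ``chain link fence'' in which a union bound makes \emph{all} tubes open at once rather than invoking an oriented-percolation survival-time bound. One accounting slip: your tube error $O(N^{1+\eta}/L)\cdot C_3e^{-\gamma_3 L}$ counts only a single time layer; the fence must persist to time $T$, so the number of tubes is $O\!\bigl((N^{1+\eta}/L)\cdot(T/L)\bigr)$, which is what forces the joint constraint $L=N^{\kappa}$ and $T\le N^{\rho}$ in the paper --- this does not affect the conclusion since $e^{-\gamma_3 L}$ still dominates any polynomial in $N$ and $T$.
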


\begin{figure}
\begin{center}
\begin{tikzpicture}[thick]
\draw[fill=lightgray] (1,5) rectangle (2,1);
\draw[gray] (0,0) grid (6,6);
\draw[black,ultra thick] (0,1) rectangle (6,2);
\draw[black,ultra thick] (0,4) rectangle (6,5);
\draw[black,ultra thick] (1,6) rectangle (2,0);
\draw[black,ultra thick] (4,6) rectangle (5,0);
\draw[variable=\t, domain=0:6, smooth]  plot ({\t},{0.2*sin(\t r)+4.5});
\draw[variable=\t, domain=1.2:1.8, smooth] plot ({\t},{3*sin(5*(\t-1.5) r)+3});
\draw[variable=\t, domain=4.4:5, smooth] plot ({\t},{-3*sin(5*(\t-4.7) r)+3});
\draw(1.8,6) node[above]{$\sigma^1$};
\draw(0,4.5) node[left]{$\sigma^2$};
\draw(4.5,6) node[above]{$\sigma^3$};
\end{tikzpicture}
\end{center}
\caption{The set-up for Lemma \ref{transmission}. To make the picture easier to draw the rectangles are much wider than $2N^\eta$.}
\label{fig:sponge}
\end{figure}
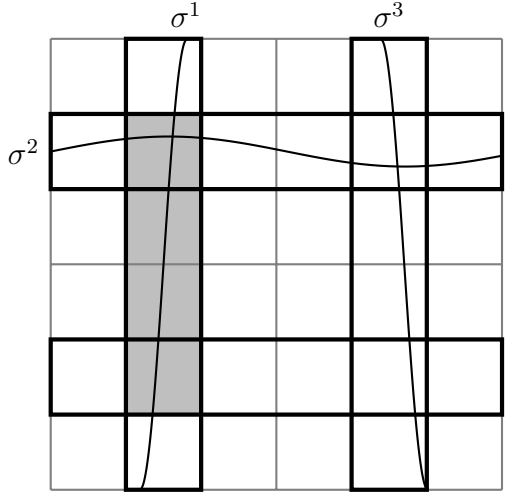

\begin{proof}
An open crossing is a self-avoiding path and hence isomorphic to an interval on $\ZZ$. We assume that $\sigma_1$ maps to $[-n_1,n_1]$ (if the length is even drop one site from the end), where $N \le n_1 \le 2N^{1+\eta}$.

To control the contact process on $[-n_1,n_1]$ we will embed the renormalized lattice into the space-time box 
$$
{\cal S}_1 = [-n_1,n_1] \times [0,N^\rho].
$$ 
$\rho$ will be chosen later, but at this point it is sufficient that $1<\rho<\infty$. We will use the renormalized sites with $c_m \in [-n_1+3\delta L,n_1-3\delta L]$ and $d_n \le N^{\rho}-(1+3\delta)L/v(\lambda)$. In addition we will only use the $\Gamma_{m,n}$ and  $\hat\Gamma_{m,n}$ that are contained in  ${\cal S}_1$. First we will show that with high probability ALL of the tubes in ${\cal S}_1$ in our renormalized lattice are open.  We will call the resulting structure a {\it chain link fence.} Choose $L = N^{\kappa}$ for some $\kappa\in (0,1)$. The dimension of ${\cal S}_1$ implies that there are at most $O(N^{\rho+1+\eta-\kappa})$ tubes in ${\cal S}_1$.  Applying Lemma \ref{renpath} and a simple union bound shows that we will have a chain link fence in ${\cal S}_1$ with probability at least $1-O(N^{\rho+1+\eta-\kappa})\cdot C_3 \exp(-\gamma_ 3N^\kappa)$.

The chain link fence represents open paths in the graphical representation of the contact process. Hence if the contact process survives by time $(1+3\delta)L/v(\lambda)$, it must have hit an open path in the chain link fence and will spread over time through the chain link fence. Starting from an $\ep$-good configuration on $\sigma^1$, the rightmost occupied site is at distance at least $\ep N$ to the end point of $\sigma^1$. Lemma \ref{thm4} with our choice of $L=N^{\kappa}$ implies that with high probability the right edge of the contact process $\zeta^1_t$ on $[-n_1,n_1]$ (i.e., on $\sigma^1$) will not reach $n_1$ by time $(1+3\delta)L/v(\lambda)$. The same argument applies for the left edge. That is, we can view the process $\zeta^1_t$ as the contact process on $\ZZ$ up to time $(1+3\delta)L/v(\lambda)$. It follows from the definition of an $\ep$-good configuration that $\zeta^1_t$ survives up to time $(1+3\delta)L/v(\lambda)$ with probability at least $1-\ep$. From then on the contact process will spread through the chain link fence so that for $n\in \mathbb{N}$ such that $n\geq 2n_1/L$ and $d_n \le N^{\rho}-(1+3\delta)L/v(\lambda)$, we will have at least one occupied site (on the intersection points of the chain link fence) near each $(c_m,d_n)$ with $c_m \in [-n_1+3\delta L,n_1-3\delta L]$.

The next step is to spread the contact process from $\sigma_1$ in $Q_1$ to $\sigma_2$ in $Q_2$. Embed $\sigma_2$ into $\ZZ$ as $[0,n_2]$ with the left end point of $\sigma^2$ sent to 0. Let $m_2$ be the right-most point on $\sigma_2$ that is in $\sigma_1$. We do this so that there are no points of $\Gamma_1$ in $(m_2,n_2]$. We will use the renormalized sites with $c_m \in [m_2+3\delta L,n_2-3\delta L]$ and $d_n \le N^{\rho}-(1+3\delta)L/v(\lambda)$. We will only use the $\Gamma_{m,n}$ and  $\hat\Gamma_{m,n}$ that are contained in  ${\cal S}_2=[m_2,n_2]\times [0,N^\rho]$. 

The argument here is very much the same as that for $Q_1$. Following the same argument we can prove the existence of a chain link fence in ${\cal S}_2$ with high probability. Next we observe that if the contact process on $\sigma^2$ survives through an interval $I_n = [d_n,d_n + 2(1+3\delta)L/v(\lambda)]$ then its open path must have hit the chain link fence in ${\cal S}_2$. Once the contact process spreads through the chain link fence, there will be at least one occupied site near each $(c_m,d_n)$ with $c_m \in [m_2+3\delta L,n_2-3\delta L]$ and $d_n \le N^{\rho}-(1+3\delta)L/v(\lambda)$. By Lemma \ref{tauA} this is more than enough to conclude that the configuration on $\sigma^2$ is $\ep$-good at time $T=\ep_0/\alpha$ when $\alpha$ is sufficiently small and $N$ is large.

Now it remains to show the contact process on $\sigma^2$ survives through some $I_n$ with high probability. Note that for $n\geq 2n_1/L$, in every interval $I_n$, $m_2$ will be occupied at least twice. Each time when $m_2$ is occupied, we can start an independent contact process on $\sigma^2$ with $m_2$ initially occupied. For $n\in \mathbb{N}$ satisfying $n\geq 2n_1/L$ and $d_n \le N^{\rho}-(1+3\delta)L/v(\lambda)$ we can try sufficiently many times for one of the contact processes to survive on $\sigma^2$ through some interval $I_n$. 

The proof for spreading the contact process from $\sigma^2$ to $\sigma^3$ is the same as the last paragraph. Choosing $N$ and $\alpha$ suitably we can make the error probability arbitrarily small and hence the proof of Lemma \ref{transmission} is complete.

\end{proof}

At this point we have verified all of the claims in the sketch of the proof given in Section \ref{sec:introth3} and the proof is complete.

\clearp 

\section{Proof of Theorem \ref{d1die}}\label{sec:pfth4}

We will follow the approach described in the introduction. Let $T=(t_0+\beta)/\alpha$ for some $t_0,\beta>0$ and let $K$ be a large integer. The values of $t_0$, $\beta$ and $K$ will be determined later in the proof.

\begin{lemma}\label{d1perc}
For any $\varepsilon>0$, there exists a choice of $K$ and $T$ so that 
$$
P(\text{$(0,0)$ is open})\geq 1-\varepsilon
$$
when $\alpha$ is sufficiently small.
\end{lemma}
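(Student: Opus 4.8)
The plan is to realize the three-phase picture sketched in Section~\ref{sec:introth4}. Write $T=(t_0+\beta)/\alpha$. By attractiveness it suffices to verify the block event in the worst case, so ``$(0,0)$ is open'' is read as: started from all $1$'s on $[-2K,2K]$ at time $0$, with births into $[-2K,2K]$ from across its spatial boundary permitted, no $1$ ever sits in $[-K,K]\times[T,3T]$. Given $\varepsilon>0$ I would fix the constants in the order $t_0,\beta$ (moderately large), then $K$ large, then a length scale $\ell_0=\ell_0(t_0,\beta,K,\lambda,\theta,\varepsilon)$, and finally $\alpha$ small, and produce two events $G_1,G_2$ with $P(G_1),P(G_2)\ge 1-\varepsilon/2$ and $G_1\cap G_2\subseteq\{(0,0)\text{ open}\}$.

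The device that makes Phase~1 work is a single-site domination. For each site $x$ let $\tilde\xi^x_t$ be the $\{1,0,-1\}$-valued process that uses $\xi_t$'s own Poisson clocks at $x$ (the rate-$1$ $\bullet_1$'s, the rate-$\alpha$ $\bullet_{-1}$'s and the rate-$\alpha\theta$ $\ast_{-1}$'s) and in which \emph{every} arrow into $x$ is treated as firing, i.e.\ both neighbours of $x$ are pretended to be occupied at all times. A short case check on the transitions gives $\tilde\xi^x_t\ge\xi_t(x)$ for all $t$ when $\tilde\xi^x_0=\xi_0(x)$, so $\{\tilde\xi^x_t=-1\}\subseteq\{\xi_t(x)=-1\}$; and $\tilde\xi^x$ depends only on the clocks $T^x,T^{\alpha,x},T^{\theta,x},T^{x-1,x},T^{x+1,x}$, which are disjoint for distinct $x$, so the processes $\{\tilde\xi^x\}_x$ are mutually independent. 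Finally $\tilde\xi^x$ is an irreducible three-state Markov chain whose stationary mass at $-1$, namely $(1+\theta+2\lambda\theta)^{-1}$, is a positive constant not depending on $\alpha$. Put $W=\{x\in[-2K,2K]:\tilde\xi^x_s=-1\text{ for all }s\in[t_0/\alpha,3T]\}$ and let $G_1$ be the event that every length-$\ell_0$ sub-interval of $[-2K,2K]$ meets $W$. For $t_0$ large (so $t_0/\alpha$ exceeds the relaxation time of $\tilde\xi^x$) one has $P(\tilde\xi^x_{t_0/\alpha}=-1)\ge\tfrac12(1+\theta+2\lambda\theta)^{-1}$, and since a $-1$ then persists through $[t_0/\alpha,3T]$ with probability $e^{-\alpha\theta(3T-t_0/\alpha)}=e^{-\theta(2t_0+3\beta)}$ (independent of the past), we get $P(x\in W)\ge c_1$ for a constant $c_1>0$ independent of $\alpha$. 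By independence the chance a given $\ell_0$-window avoids $W$ is $\le(1-c_1)^{\ell_0}$, so $P(G_1)\ge 1-4K(1-c_1)^{\ell_0}\ge 1-\varepsilon/2$ once $\ell_0\ge c_1^{-1}\log(8K/\varepsilon)$; choosing $K$ large first makes this $\ell_0$ smaller than $K$.

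On $G_1$ the sites of $W$ chop $[-2K,2K]$ into sub-intervals of length $\le\ell_0$, each flanked by sites frozen in state $-1$ throughout $[t_0/\alpha,3T]$. Every such sub-interval other than the two that touch $\{\pm2K\}$ is cut off from the rest, so the $1$'s inside it are dominated by the contact process on an interval of length $\le\ell_0$ with empty boundary, run (by monotonicity) from the all-$1$'s state at time $t_0/\alpha$; this is a finite absorbing Markov chain, hence is extinct by time $T=t_0/\alpha+\beta/\alpha$ except with probability $C_{\ell_0}e^{-\gamma_{\ell_0}\beta/\alpha}$ for constants $C_{\ell_0},\gamma_{\ell_0}>0$ depending only on $\ell_0$ and $\lambda$ --- and this uses no hypothesis on $\lambda$. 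Let $G_2$ be the event that extinction by time $T$ occurs simultaneously in all $\le 4K$ of these sub-intervals; a union bound gives $P(G_2)\ge 1-4KC_{\ell_0}e^{-\gamma_{\ell_0}\beta/\alpha}\ge 1-\varepsilon/2$ for $\alpha$ small. Finally, on $G_1\cap G_2$ set $w_+=\min(W\cap[K,2K])$ and $w_-=\max(W\cap[-2K,-K])$, which exist and satisfy $K\le w_+\le K+\ell_0<2K$ and $-2K<-K-\ell_0\le w_-\le -K$. Then $[w_-,w_+]\supseteq[-K,K]$, the interval $[w_-,w_+]$ contains no $1$'s at time $T$ (its interior sub-intervals are among those handled by $G_2$ and its $W$-sites carry $-1$'s), and its two endpoints are in state $-1$ on all of $[T,3T]$; considering the first time after $T$ at which a $1$ could appear inside $[w_-,w_+]$, such a $1$ would have to be born from a $1$ at a neighbour lying inside $[w_-,w_+]$ or at a frozen endpoint, which is impossible. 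Hence $[-K,K]\times[T,3T]$ stays empty of $1$'s, so $P((0,0)\text{ open})\ge P(G_1\cap G_2)\ge 1-\varepsilon$.

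The crux is the single-site comparison of the second paragraph: one must check that pretending $x$'s neighbours are permanently occupied both dominates $\xi_t(x)$ from above (so that the $-1$'s one sees in $\tilde\xi^x$ really are present in $\xi$) and decouples $x$ from the rest of the lattice, and then extract from it the uniform-in-$\alpha$ bound $P(x\in W)\ge c_1>0$. Everything after that is routine --- the Phase~1 union bound needs no percolation input, the Phase~2 estimate is the elementary exponential tail of extinction for a contact process on a fixed finite interval, and the Phase~3 step is the ``a $-1$ cannot be crossed'' observation --- and it is precisely the absence of any percolation or edge-speed ingredient that allows $\lambda$ to be arbitrary here.
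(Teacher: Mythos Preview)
Your proof is correct and follows the same three-phase strategy as the paper, resting on the identical single-site comparison with the independent chain $\hat\xi$ (your $\tilde\xi^x$) whose stationary $-1$ mass is $(1+\theta+2\lambda\theta)^{-1}$. The only organizational difference is that you define one set $W$ of walls persisting on the full window $[t_0/\alpha,3T]$ and let it do double duty---fragmenting $[-2K,2K]$ for the Phase~2 extinction \emph{and} supplying the two shielding barriers $w_\pm$ for Phase~3---whereas the paper first takes walls on $[t_0/\alpha,T]$ and then separately exhibits $-1$'s in $[K,2K]$ and $[-2K,-K]$ that survive $[T,3T]$; your packaging is slightly cleaner but the content is the same.
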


\noindent 
By translation invariance the result holds for any site $(m,n) \in {\cal L}$. 

\mn
\textit{Proof of Lemma \ref{d1perc}.}
Recall that $T=(t_0+\beta)/\alpha$. We consider the following three phases: $[0,t_0/\alpha], [t_0/\alpha, T]$ and  $[T, 3T]$.

\mn
{\bf Phase 1: $[0, t_0/\alpha]$.} In this phase we will create a positive density of $-1$'s. 

\mn
Consider a comparison process $\hat{\xi}_t$ with the following transition rates:
\begin{center}
\begin{tabular}{clccl}
$1 \to 0$ & at rate 1 & \qquad & $0\to -1$ & at rate $\alpha$  \\
$0 \to 1$ & at rate $ 2\lambda$ & \qquad & $-1\to 0$ &  at rate $\theta\alpha$ 
\end{tabular}
\end{center}
In the original process $\xi_t$, the transition rates are the same except that 0 turns to 1 at rate $\lambda N_1$, where $N_1$ is the number of occupied neighbors. In $d=1$, this rate is always $\leq 2\lambda$. Hence if $\hat\xi_0(x)=1$ then $P(\hat{\xi}_t(x)=-1) \le  P(\xi_t(x)=-1)$. Note that in the comparison process the states of different sites are independent.

By a straightforward calculation the equilibrium density for
the birth and death process  $\hat{\xi}_t$, which satisfies detailed balance, is:
$$
\pi(1)=\frac{2\lambda \theta}{1+\theta+2\lambda \theta}, \quad \pi(0)=\frac{\theta}{1+\theta+2\lambda \theta}, \quad \pi(-1)=\frac{1}{1+\theta+2\lambda \theta}.
$$
In our initial condition $\xi_0(x) \equiv 1$ so we take $\hat\xi_0(x) \equiv 1$,
Let $\rho(t) = P(\hat{\xi}_t(x)=-1)$. Markov chain theory implies $\rho(t) \to \pi(-1)/2$. In order to specify a value for $t_0$ we will ound the rate of convergence. Let $X_t, Y_t$ be two Markov chains with the above transition rates with $X_0=1$ and $Y_0$ following the equilibrium distribution $\pi$. Let $\mu_t$ represent the distribution of $X_t$ and let $\tau=\inf \{ t: X_t=Y_t\}$. Then a standard coupling argument of  the two Markov chains $X_t$ and $Y_t$,
see e.g., Section 5.6 in \cite{PTE5}, implies that 
$$
|\rho(t)-\pi(-1)|\leq ||\mu_t-\pi||_{TV}\leq P(\tau>t).
$$
Let $p_{x,y}$ be the probability that the Markov chains starting from $x$ and $y$ respectively will hit by time $1$. It is easy to see that $\min_{x,y\in \{-1,0,1\}} p_{x,y}\geq c_0\alpha$ for some $c_0>0$ by writing out the probabilities. It then follows that for any integer $m$
$$
P(\tau>m)\leq P(\hbox{binomial}(m, c\alpha)=0)
=(1-c_0\alpha)^m \leq e^{-c_0\alpha m}.
$$ 
This implies that we can choose $t_0$ large so that 
$$
|\rho(t_0/\alpha)-\pi(-1)|\leq 2P(\tau>t_0/\alpha)\leq 2e^{-Ct_0}\leq \pi(-1)/2,
$$
i.e., $P(\xi_{t_0/\alpha}(x)=1)\geq \rho(t_0/\alpha)\geq \pi(-1)/2$.

\mn
{\bf Phase 2: $[t_0/\alpha,(t_0+\beta)/\alpha]$.} Starting with the $-1$'s at time $t_0/\alpha$ dominating a product measure with density at least $ \pi(-1)/2$, we will with high probability kill all 1's in $[-2K,2K]$.

\mn
Choose $\beta$ iso that $e^{-\theta \beta}>3/4$. This implies that the probability a site is in state $-1$ from time $t_0/\alpha$ up to time $T=(t_0+\beta)/\alpha$ is at least $\nu =3\pi(-1)/8$. We will call these sites ``walls''. Suppose that the distance between two consecutive walls is $m$. There is a probability at least
$$
[(1-e^{-1})e^{-2\lambda}]^m
$$
to kill all the particles in the interval by time 1: all sites are hit by deaths and there are no births. The probability we fail to do this in $N$ tries is at most
\beq\label{fail1}
( 1-  [(1-e^{-1})e^{-2\lambda}]^m )^N.
\eeq
For all $t\geq 0$ the set of $-1$'s in $\xi_t$ dominates the set of $-1$'s in $\hat\xi_t$, which is a product measure with density $\nu$, the distance between two consecutive walls is bounded by a Geometric($\nu$) random variable. Let 
$$
M(K)=\frac{-\log(4K^2)}{\log(1-\nu)}
$$
be chosen so that the probability of a gap of size $>M(K)$ is 
$$
\le (1-\nu)^{M(K)} = (4K^2)^{-1}. 
$$
Note that there can be at most $2K$ gaps between walls in $[-2K,2K]$. Let $A_K$ be the event that there are two consecutive walls separated by distance at least $M(K)$ in $[-2K,2K]$. Then we have
$$
P(A_K) \leq 2K(1-\nu)^{M(K)}=\frac{1}{2K} \to 0 
$$
as $K\to \infty$. Let $G_2$ denote the event that all the 1's in $[-2K,2K]$ die during this phase. It follows from \eqref{fail1} that 
\beq\label{failphase2}
P(G_2^c)\leq P(A_K)+2K ( 1-  [(1-e^{-1})e^{-2\lambda}]^{M(K)} )^{\beta/\alpha }.
\eeq 
In the next step we will choose a suitable $K$ to make \eqref{failphase2} sufficiently small and to achieve our third objective.

\mn
{\bf Phase 3: $[T, 3T]$.} In this phase we will build a walls in $[K,2K]$ that lasts fro times $[T,3T]$ and another one in $[-2K,-K]$ to keep $[-K,K]$ vacant during $[T,3T]$. Let $G_3$ denote the event that there are two such walls.

\mn
The density of $-1$'s at time $T$ is at least $\nu=3\pi(-1)/8$. 
The probability that a wall of $-1$ will not flip to 0 within time $2T =2 (t_0+\beta)/\alpha$ is $\exp(-2\theta(t_0+\beta))$. Since the flips at each site are independent, the probability that we will obtain a wall in $[K,2K] \times [T,3T]$ and a wall in $[-2K,-K]\times [T,3T]$ is 
\beq\label{success3}
P(G_3)\geq 1- 2\exp(-2K\nu \theta(t_0+\beta)),
\eeq
which is arbitrarily close to 1 if $K$ is sufficiently large. For a given $\ep>0$, we will choose $K$ large enough so that $\eqref{success3}\geq 1-\ep/2$. Having completed the choice of $K$, we note that if $K$, $\beta$, and $\varepsilon >0$ are fixed then for $\alpha$ sufficiently small, the probability $\eqref{failphase2}$ that we fail to kill all 1's in phase 2 is less than $\ep/2$.

Thus, with high probability all the 1's in $[-2K,2K]$ die by time $T$, and there are walls in $[K,2K] \times [T,3T]$ and
$[-2K,-K] \times [T,3T]$ to keep 1's from being reintroduced into $[-K,K] \times [T,3T]$.  Finally, combining the error probabilities in each phase, we have 
$$
P(\text{$(0,0)$ is closed})\leq P(G_2^c)+P(G_3^c)\leq \ep,
$$

At this point we have completed the proofs of  Lemma \ref{d1perc}. The desired conclusion follows using the reasoning at the end of Section \ref{sec:introth4}.

\clearp

\section{Proof of Theorem \ref{d2die}} \label{sec:pfth5}

\subsection{The block construction event has high probability}

Recall from \eqref{pid=2} that  $\pi(-1)=1/(1+\theta+4\lambda\theta)$  in $d=2$.

\begin{lemma}\label{gblock}
Let $p_0<p_c^{site}(\ZZ^2)$ be the constant such that 
\beq
-4\chi(p_0)^2\log((1-e^{-1})e^{-4\lambda})=1.
\label{defp0}
\eeq 
When $1-\pi(-1)<p_0$,  there are constants $c_T$ and $c_K>0$ so that if $T=c_T/\alpha$ and $K=T/c_K$ then 
$$
\lim_{\alpha\to0} P(\text{the block $[-2K,2K]^2\times [0,3T]$ is good})=1.
$$
\end{lemma}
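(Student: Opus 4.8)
The plan is to turn the three-phase scheme stated before the lemma into quantitative estimates and then invoke the generic block construction. By attractiveness we may take the block to start from all $1$'s in $[-2K,2K]^2$ at time $0$ (and, for the comparison with oriented percolation, from the worst possible configuration outside). We will set $T=c_T/\alpha$ with $c_T=t_0+\beta$, where $t_0$ is a large constant and $\beta$ a small constant fixed in Phase~1, and $K=T/c_K$ with $c_K$ a small constant fixed in Phase~3. It will be convenient to note at the outset that the block event we are bounding is measurable with respect to the graphical representation in a space-time region of size $O(K)\times O(T)$ around the block, so that the block events have finite range of dependence, as the construction requires.

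\textbf{Phase 1 ($[0,t_0/\alpha]$): a dense frozen barrier.} As in the $d=1$ proof, compare $\xi_t$ with the product (independent-coordinate) chain $\hat\xi_t$ in which $0\to1$ occurs at the constant rate $4\lambda\ge\lambda N_1$ and the other rates are unchanged; a monotone coupling, started from all $1$'s, gives $\hat\xi_t\ge\xi_t$, so $\xi_t(x)=-1$ whenever $\hat\xi_t(x)=-1$, i.e.\ the set of $-1$'s in $\xi_t$ dominates the product measure of density $\rho(t)=P(\hat\xi_t(x)=-1)$. The chain $\hat\xi$ is reversible with stationary mass $\pi$ as in \eqref{pid=2}, and a coupling estimate (as in $d=1$) shows $\rho(t_0/\alpha)\to\pi(-1)$ fast enough that we may fix $t_0$ with $\rho(t_0/\alpha)\ge\pi(-1)-o(1)$. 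Call a site a \emph{barrier} if it is in state $-1$ throughout $[t_0/\alpha,T]$; thinning the dominating product measure by the independent events ``no $\ast_{-1}$ mark at $x$ during $[t_0/\alpha,T]$'' (probability $e^{-\theta\beta}$) shows the barriers dominate a product measure of density $\ge(\pi(-1)-o(1))\,e^{-\theta\beta}$. Since $1-\pi(-1)<p_0<p_c^{site}(\ZZ^2)$ by hypothesis, we may first take $\beta$ small and then $t_0$ large so that the density $p'$ of \emph{non}-barrier sites satisfies $p'<p_0<p_c^{site}(\ZZ^2)$; hence the non-barrier sites form a subcritical site percolation whose origin-cluster $C_0$ obeys $P(|C_0|\ge s)\le e^{-\gamma s}$ for some $\gamma=\gamma_{p'}>0$, by Lemma~\ref{subcri_cluster}.

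\textbf{Phase 2 ($[t_0/\alpha,T]$): killing every $1$ in the box.} On this interval a barrier is a frozen $-1$, which neither contributes to any birth rate nor can be re-infected, so each connected component $C$ of non-barrier sites evolves as (a process dominated by) the contact process on the finite graph $C$, with no influence from outside. Started from $C$ full, in a unit time interval the probability that all $|C|$ death marks fall in the interval and no $C$-internal birth arrow fires is at least $((1-e^{-1})e^{-4\lambda})^{|C|}=:q^{|C|}$, and this empties $C$ for good; so $C$ is still occupied at time $T$ with probability at most $(1-q^{|C|})^{\beta/\alpha}$. As there are at most $|[-2K,2K]^2|=\Theta(K^2)=\Theta(\alpha^{-2})$ clusters meeting the box, a first-moment bound gives
\[
P\big(\text{some }1\text{ survives in }[-2K,2K]^2\text{ at time }T\big)\ \le\ \Theta(\alpha^{-2})\,E\big[(1-q^{|C_0|})^{\beta/\alpha}\,;\,|C_0|\ge1\big].
\]
It remains to show the right side tends to $0$: one splits the expectation at a cluster-size threshold of order $\log(1/\alpha)/|\log q|$; below it the factor $(1-q^{|C_0|})^{\beta/\alpha}$ is super-polynomially small in $\alpha$, above it one pays the subcritical tail $e^{-\gamma|C_0|}$, and balancing these against the $\Theta(\alpha^{-2})$ prefactor is precisely where the choice of $p_0$ in \eqref{defp0} is used — the quantity $\chi(p_0)^2$ there encodes the trade-off between the size $\Theta(T^2)$ of the box and the $\beta/\alpha$-step clearing probability $q^{|C|}$. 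This balancing is the main obstacle in the whole proof. (A subsidiary technical point: a cluster meeting the box may stick out of it, but by the exponential tail, with high probability none reaches past $[-3K,3K]^2$, and the same clearing bound applies on that slightly larger box.)

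\textbf{Phase 3 ($[T,3T]$): no reinvasion of the core.} After time $T$ the barriers may dissolve, so instead of protecting the interior we make the distance from $\partial[-2K,2K]^2$ to $[-K,K]^2$ exceed what the process can traverse in time $2T$. Dominate $\xi_t$ by the contact process obtained by ignoring the $-1$'s; a standard speed bound (comparison with a branching random walk, or the two-dimensional analogue of the large-deviation estimate in Lemma~\ref{thm4}) gives an effective speed $v^*=v^*(\lambda)$ such that the probability an infection path starting outside $[-2K,2K]^2$ reaches $[-K,K]^2$ within time $2T$ is at most $(\mathrm{poly\ in\ }K)\,e^{-cT}$ provided $K/(2T)>v^*$, i.e.\ provided $c_K<1/(2v^*)$; with $c_K$ so chosen and $K=T/c_K\to\infty$, this tends to $0$. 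Combining the three phase estimates, the block $[-2K,2K]^2\times[0,3T]$ is good with probability $\to1$ as $\alpha\to0$, which (together with the finite range of dependence noted above) is exactly what the generic block construction needs.
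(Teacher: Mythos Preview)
Your three–phase skeleton is exactly the paper's strategy, and Phases~1 and~3 are essentially what the paper does (the paper's Phase~3 uses an explicit path count $16K\cdot 4^K\cdot P(S_K\le 2T)$ with a large-deviation bound on the exponential sum, but this is morally your speed bound). The genuine gap is in Phase~2, which you yourself flag as ``the main obstacle'' but do not actually carry out.

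There are two linked problems. First, you invoke Lemma~\ref{subcri_cluster}, which only yields $P(|C_0|\ge s)\le e^{-\gamma_{p'} s}$ for an \emph{unspecified} $\gamma_{p'}>0$. That is not enough to prove the lemma as stated: the threshold $p_0$ in \eqref{defp0} is defined through $\chi(p_0)$, and to make contact with it you need the explicit bound
\[
P_p(|C_0|\ge n)\ \le\ 2\exp\!\Big(-\frac{n}{2\chi(p)^2}\Big)\qquad (n>\chi(p)^2),
\]
which is (6.77) in Grimmett and is what the paper actually uses. Without identifying your $\gamma$ with $1/(2\chi(p)^2)$ there is no way to convert the hypothesis $1-\pi(-1)<p_0$ into the inequality that makes the balancing succeed.

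Second, here is the missing computation. With $K\asymp T\asymp 1/\alpha$, set the deterministic cutoff $n(K)=4(1+\delta)\chi(p)^2\log K$. The union bound on large clusters gives
\[
16K^2\cdot 2\exp\!\Big(-\frac{n(K)}{2\chi(p)^2}\Big)\ =\ 32\,K^{-2\delta}\ \longrightarrow\ 0.
\]
On the complementary event every cluster has size $\le n(K)$, so the per-cluster clearing probability in unit time is at least $q_\lambda^{\,n(K)}=K^{-r}$ with $r=-4(1+\delta)\chi(p)^2\log q_\lambda$. By \eqref{defp0} and monotonicity of $\chi$ on $[0,p_c)$, $p'<p_0$ gives $r<1$ for $\delta$ small; since $\beta/\alpha\asymp K$ one gets
\[
16K^2\,(1-K^{-r})^{\beta/\alpha}\ \le\ 16K^2\,\exp\!\big(-cK^{1-r}\big)\ \longrightarrow\ 0.
\]
This is precisely the place where the specific definition of $p_0$ is used, and it cannot be replaced by the qualitative ``split at $\log(1/\alpha)/|\log q|$ and balance'' that you sketch; you should supply these two displays.
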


\begin{proof}
As in the previous section $T=(t_0+\beta)/\alpha$ for some $t_0$ and $\beta$ to be determined. We need $K$ and $T$ to be comparable so that range of dependence between events in the block construction stays bounded as $T\to\infty$ . Again there are three phases in the construction but this time the proof is simpler if we start with the last one.

\mn
{\bf Phase 3: $[T, 3T]$.} Starting with no 1's in $[-2K,2K]^2$ at time $T$, we will upper bound the probability of $B$, the event there is a path of length $K$ starting at a point on the boundary of $[-2K,2K]^2$ along which the sum of the times between infections is at most $2T$.

\mn
Starting from all 1's on the boundary of $[-2K,2K]^2$,  it takes at least $K$ births to reintroduce a 1 into the region $[-K,K]^2$. There are $16K$ sites on the boundary of $[-2K,2K]^2$. Starting from a given site, the number of paths of length $K$ is $4^K$. Let $\{e_i: i\geq 1\}$ be independent exponential random variables with rate $\lambda$ and let $S_K = e_1 + \cdots e_K$. We need $\{S_K\leq 2T\}$ to reach $[-K,K]^2$ by time $2T$. Hence,
\beq
P(B) \le 16 K \cdot 4^K \cdot P(S_K \le 2T).
\label{eq31}
\eeq
Standard large deviations results (see, e.g., Section 2.7 in \cite{PTE5}) imply that if $0<c<1$ then we have 
\beq
P( S_K \le cK/\lambda) \le \exp(-\gamma(c) K)  \quad \text{ for some }\gamma(c)>0.
\label{eq32}
\eeq
 If $c_0$ is chosen so that $\exp(-\gamma(c_0)) = 1/5$, and we choose $c_K$ such that 
\beq
2T=2c_K K \le (c_0/\lambda)K
\label{cond1}
\eeq 
then combining \eqref{eq31} and \eqref{eq32} shows that 
\beq\label{G3}
P(B)\leq  16K \cdot \left( \frac{4}{5}\right)^K\to 0 \quad \text{ as }K\to\infty.
\eeq

\mn
{\bf Phase 1: $[0, t_0/\alpha]$.} In this phase we build a giant component of $-1$'s in which the holes are small.

\mn
As in Section \ref{sec:pfth4} we consider a comparison process $\hat{\xi}_t$ with the following transition rates:
\begin{center}
\begin{tabular}{clccl}
$1 \to 0$ & at rate 1 & \qquad & $0\to -1$ & at rate $\alpha$  \\
$0 \to 1$ & at rate $ 4\lambda$ & \qquad & $-1\to 0$ &  at rate $\theta\alpha$ 
\end{tabular}
\end{center}
In the original process $\xi_t$, the transition rates are the same except that 0 turns to 1 at rate $\lambda N_1$, where $N_1$ is the number of occupied neighbors. In $d=2$, this rate is always $\leq 4\lambda$. Hence if $\hat\xi_0(x)=1$ then $P(\hat{\xi}_t(x)=-1) \le  P(\xi_t(x)=-1)$. 

Using an argument in Phase 1 in the previous section, we can pick a time $t_0$ so that at time $t_0/\alpha$, the density of $-1$'s is $\geq (1-\delta)\pi(-1)$. If $\beta$ is chosen so that $e^{-\theta\beta} > (1-\delta)$ the density of $-1$'s that were alive at time $t_0/\alpha$ and persist to time $T=(t_0+\beta)/\alpha$ is at least $(1-\delta)^2 \pi(-1)$. If $1-\pi(-1) < p_0$ then when $\delta$ is small $p=1-(1-\delta)^2\pi(-1) < p_0$. We will call sites occupied by $-1$'s that persist from $t_0/\alpha$ to $(t_0+\beta)/\alpha$ \textit{closed}. All other sites are said to be \textit{open}.

\mn
{\bf Bounding the largest hole in the cluster of $-1$'s}

\mn
Let $p_0$ be the constand defined in \eqref{defp0}. Consider independent site percolation in which sites open with probability $p< p_0$.  Let $\mathcal{C}_0$ denote the open cluster containing the origin. It follows from (6.77) in \cite{Grimmett} that
$$P_p( |{\cal C}_0| \ge n) \le  2\exp\left( -\frac{n}{2\chi(p)^2} \right) \quad \text{if } n>\chi(p)^2,$$
where $\chi(p)$ is the mean cluster size when the open probability is $p$. Theorem 6.108 in \cite{Grimmett} states that $\chi(p)$ is an analytic function of $p$ on $[0,p_c)$, implying that $\lim_{p\to 0} \chi(p)=1$.

Let $A_K$ denote the event that there exists an open cluster of size at least $n(K)$ that overlaps with $[-2K,2K]^2$, where $n(K) = 4(1+\delta)\chi(p)^2 \log K$. We have
\beq\label{probAK}
P_p(A_K)\leq 16K^2 \cdot 2\exp\left( -\frac{n(K)}{2\chi(p)^2} \right)\to 0 \quad \text{as }K\to\infty.
\eeq

\mn
{\bf Phase 2: $[t_0/\alpha, (t_0+\beta)/\alpha]$.} In this phase we will kill all 1's in $[-2K,2K]^2$.

\mn
As in the previous section the probability to kill a particular particle in time 1 is at least $  q_\lambda = (1-e^{-1})e^{-4\lambda}$.  In the contact process on a finite set of size $n\le n(K)$, the process dies out by time 1 with probability at least
$$
q_{\lambda}^{n(K)}=K^{4(1+\delta)\chi(p)^2\log q_\lambda}\equiv K^{-r}
$$
where $r = -4(1+\delta)\chi(p)^2\log q_\lambda$. Hence the probability that the contact process on a set of size at most $n(K)$ has not died out by time $\beta/\alpha$ is less than
$$ 
(1-K^{-r})^{\beta/\alpha}.$$

Let $G_2$ be the event that all the 1's in $[-2K,2K]^2$ die during this phase. We have 
\beq\label{csurv}
P(G_2^c)\leq P_p(A_K)+16K^2 \cdot (1-K^{-r})^{\beta/\alpha}.
\eeq
By the definition of $p_0$, we have $-4\chi(p)^2\log q_\lambda<1$ for $p<p_0$. We can choose $\delta>0$ sufficiently small so that $r=-4(1+\delta)\chi(p)^2\log q_\lambda<1$.
Note $\beta/\alpha=\beta T/(t_0+\beta)$. If $T=c_K K$ for some $c_K>0$, since $r<1$ it follows from \eqref{probAK} and \eqref{csurv}  that 
\beq\label{G2}
P(G_2^c) \leq P_p(A_K)+16K^2 \cdot (1-K^{-r})
^{c_K\beta K/(t_0+\beta)}\to 0 \quad \text{ as }K\to \infty.
\eeq
 This completes the proof of Lemma \ref{gblock}. 
\end{proof}

\subsection{Comparison with oriented percolation}\label{sec:compop}

First we introduce some notation used in \cite{CDP13}. Let $D=d+1$, where we assume $d=2$. Let $\AA$ be a $D\times D$ matrix satisfying the following conditions: (i) if $x=(x_1,\dots,x_D)$ has $x_1+\cdots +x_D=1$ then the $D$-th coordinate of $\AA x$, denoted by $(\AA x)_D$, satisfies $(\AA x)_D=1$,  and (ii) if $x$ and $y$ are orthogonal then so are $\AA x$ and $\AA y$. Let $\QQ=\{ \AA x: x\in[-1/2,1/2]^D\}$ and $\LL_D=\{ \AA x: x\in\ZZ^D\}$, so that the collection $\{z+\QQ: z\in \LL_D\}$ is a tiling of space by rotated cubes. 

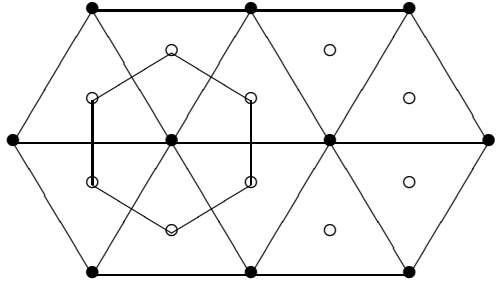
\begin{figure}[h]
\begin{center}
\begin{picture}(220,140)
\put(17,68){$\bullet$}
\put(47,118){$\bullet$}
\put(47,18){$\bullet$}
\put(77,68){$\bullet$}
\put(107,118){$\bullet$}
\put(107,18){$\bullet$}
\put(137,68){$\bullet$}
\put(167,118){$\bullet$}
\put(167,18){$\bullet$}
\put(197,68){$\bullet$}
\put(20,70){\line(1,0){180}}
\put(50,20){\line(1,0){120}}
\put(50,120){\line(1,0){120}}
\put(50,20){\line(3,5){60}}
\put(50,120){\line(3,-5){60}}
\put(20,70){\line(3,5){30}}
\put(20,70){\line(3,-5){30}}
\put(170,20){\line(-3,5){60}}
\put(170,120){\line(-3,-5){60}}
\put(200,70){\line(-3,-5){30}}
\put(200,70){\line(-3,5){30}}
\put(47,84){$\circ$}
\put(47,52){$\circ$}
\put(107,84){$\circ$}
\put(107,52){$\circ$}
\put(77,102){$\circ$}
\put(77,34){$\circ$}
\put(167,84){$\circ$}
\put(167,52){$\circ$}
\put(137,102){$\circ$}
\put(137,34){$\circ$}
\put(50,86){\line(0,-1){32}}
\put(110,86){\line(0,-1){32}}
\put(50,86){\line(5,3){30}}
\put(110,86){\line(-5,3){30}}
\put(50,54){\line(5,-3){30}}
\put(110,54){\line(-5,-3){30}}
\end{picture}
\caption{$\HH'_k$ (black dots) and $\HH'_{k-1}$ (white dots), which are the corners of the Voronoi region containing $x$}
\label{fig:L4}
\end{center}
\end{figure}

Let $\HH_k=\{z\in \LL_D: z_D=k\}$ be the points on ``level" $k$. We will often write elements of $\HH_k$ in the form $(z,k)$ where $z\in \RR^d$. Let $\HH'_k=\{ z\in \RR^d: (z,k)\in \HH_k\}$. Let $\{e_1,\dots, e_D\}$ be the standard basis in $\RR^D$ and put $v_i=\AA e_i$ for $i=1,\dots, D$. Writing $v_i=(v'_i,1)$, $v'_i\in \RR^d$ has length $\sqrt{D-1}$. The definition for $\HH'_k$ implies that $\HH'_{k+1}=\{ v'_i+x: x\in \HH'_k, 1\leq i\leq D\}$.

For $x\in \HH'_k$, let $\VV_x$ be the Voronoi region for $x$, i.e., the closed set of points in $\RR^d$ that are closer to $x$ in Euclidean norm than to all the other points of $\HH'_k$ (including ties). See Figure \ref{fig:L4} for an illustration of Voronoi region. It follows from the definition of Voronoi region that for every $k\in \mathbb{N}$,
$$\cup_{x\in \HH'_k}\VV_x=\RR^d.$$
We can further note that $\VV_x$ is contained in the closed ball of radius $D$ centered at $x$ (see (5.1) in \cite{CDP13}). Thus for any $L>0$, if $c_L=L/(2D)$ then 
\beq
c_L \VV_x \subset c_L x +[-L,L]^d \quad \text{ and so }\cup_{x\in \HH'_k} c_L x+[-L,L]^d=\RR^d.
\eeq
Each $(z,n)\in \HH_n$ is associated with a block $(c_L z+ [-2K,2K]^2)\times  [ nT, (n+3)T]$. We extend the definition of good blocks from $(0,0)$ to $z\in \LL_D$ by translation, and say $(z,n)$ is open if the associated block is a good block, i.e.,
$$\text{ if }(z,n)\in W^0_n, \text{then }(c_L z+[-K,K]^2)\times [ (n+1)T, (n+3) T] \text{ contains no 1's}.$$
For our purpose we will choose $L=K$ so that there is no hole in the dead zone. That is, if all sites in $\HH_n$ are good then $\cup_{z\in \HH'_n} c_Lz+[-K,K]^2=\RR^2$.

\medskip
Next we give a description of the oriented percolation process. It will be constructed from the set of random variables $\{\eta(z), z\in \LL_D\}$, where $\eta(z)\in\{0,1\}$. If $\eta(z)=1$ then the site $z$ is said to be open, otherwise it is closed. We have seen in Section \ref{d1comparison} that an $M$-dependent oriented percolation with open probability $1-\ep$ dominates an oriented percolation where each site is open with independent probability $1-f_M(\ep)$ such that $\lim_{\ep\downarrow 0} f_M(\ep)=0$. Hence for the rest of this section, we can suppose $\{\eta(z), z\in \LL_D\}$ are i.i.d. with $P(\eta(z)=1)=1-\theta$ for some sufficiently small $\theta>0$.

The edge set $\EE_\uparrow$ for $\LL_D$ is defined to be the set of all oriented edges from $z$ to $z+v_i$, $z\in \LL_D$, $1\leq i\leq D$. A sequence of points $(z_0,\dots,z_n)$ in $\LL_D$ is called an \textit{open path} from $z_0$ to $z_n$ if there is an edge in $\EE_{\uparrow}$ from $z_i$ to $z_{i+1}$ and $z_i$ is open for $i=0,\dots, n-1$. We write $z_0\to z_n$ if there exists an open path from $z_0$ to $z_n$. Given the initial wet sites $W_0\subset \HH_0$, we say $z\in \HH_n$ is wet if $z_0\to z$ for some $z_0\in W_0$. Let $W^0_n$ be the set of wet sites in $\HH_n$ when $W_0=\{0\}$. Let $\Omega^0_\infty=\{ W^0_n\neq \emptyset \text{ for all }n\geq 0\}$. Let $\bar{W}_n$ be the set of wet sites in $\HH_n$ when all the sites in $\HH_0$ are wet. Call sites in $\bar V_n=\HH_n \backslash \bar W_n$ \textit{dry}. The connection between $W^0_n$ and $\bar W_n$ is made in Lemma 5.1 in \cite{CDP13}.
\begin{lemma}[Lemma 5.1 in  \cite{CDP13}]\label{lemma5.1}
Let $H^r_n=\{ (z,n)\in \LL_D: z\in [-r,r]^d\}$. There are $\theta_1>0$ and $r_1>0$ such that if $\theta<\theta_1$ and $r\leq r_1$ then as $N\to\infty$,
$$P(\Omega^0_\infty \text{ and }W^0_n\cap \HH^{rn}_n \neq \bar W_n \cap \HH^{rn}_n \text{ for some }n\geq N) \to 0.$$
\end{lemma}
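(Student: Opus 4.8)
We sketch the argument; this is Lemma~5.1 of \cite{CDP13}, so we only describe the structure. The statement is a \emph{coupling inside a box} for very supercritical oriented percolation: on the survival event $\Omega^0_\infty$, the cluster $W^0_n$ grown from the single site $0$ already agrees with the cluster $\bar W_n$ grown from all of $\HH_0$ throughout the cube $\HH^{rn}_n=\{(z,n):z\in[-rn,rn]^d\}$, once $r$ is small relative to the edge speed and $\theta$ is small. Since $W^0_n\subseteq\bar W_n$ always, the event to be controlled is that $\bar W_n\setminus W^0_n$ meets $[-rn,rn]^d$ for some $n\ge N$, and the plan is to bound, for each such $n$ and each $z$ in the cube, the probability that a single site $(z,n)$ lies in $\bar W_n\setminus W^0_n$, then sum. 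For $\theta<\theta_1$ the percolation is very supercritical: the survival probability $\rho(\theta)\to1$ as $\theta\to0$, the edge speed tends to its maximal value, and the dry field $\bar V_n=\HH_n\setminus\bar W_n$ of the process from all of $\HH_0$ is $M$-dependent with small density $1-\rho(\theta)$, hence stochastically dominated in the space--time sense by a subcritical Bernoulli field; in particular there is $\beta=\beta(\theta)>0$, bounded below on $(0,\theta_1)$, so that with probability $1-Ce^{-\gamma k}$ the boundary of $\bar W_n$ lies outside $[-\beta n,\beta n]^d$ for all $n\ge k$.

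The core estimate is: there are $\beta'\in(0,\beta)$ and $C,\gamma>0$, uniform over $\theta<\theta_1$, so that
$$
P\bigl(\Omega^0_\infty,\ (z,n)\in\bar W_n\setminus W^0_n\bigr)\le C\,e^{-\gamma(\beta'-|z|/n)\,n}\qquad\text{whenever }|z|\le\beta'n .
$$
Taking $r_1<\beta'$, for $r\le r_1$ the event in the lemma is contained in $\bigcup_{n\ge N}\bigcup_{z\in[-rn,rn]^d\cap\LL_D}\{(z,n)\in\bar W_n\setminus W^0_n\}$, so by the estimate its probability is at most $\sum_{n\ge N}C'n^d e^{-\gamma(\beta'-r)n}$, which tends to $0$ as $N\to\infty$ because $r<\beta'$. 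Thus everything reduces to the displayed single-site bound.

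To prove that bound one uses a restart argument. On $\Omega^0_\infty$ there is a.s. a finite random level $\sigma$ and a site $w$ with $(w,\sigma)\in W^0_\sigma$ such that the cluster of $(w,\sigma)$, restricted to the forward cone of slope $\beta$ based at $(w,\sigma)$, both survives forever and agrees inside that cone with the cluster started from all of $\HH_\sigma$; a single fresh attempt succeeds with positive probability because, in the very supercritical regime, the discrepancy region inside such a cone is dominated by a subcritical field that never reaches the cone's axis, and failed attempts are retried. Once $(w,\sigma)$ is found, a site $(z,n)$ with $n$ large and $|z|\le\beta'n$ sits deep inside the good cone, so $(z,n)\in\bar W_n\setminus W^0_n$ forces either that $(w,\sigma)$ had not yet appeared by a level of order $n$, or a discrepancy cluster of the coupled process $(W^0_\cdot,\bar W_\cdot)$ joining a neighborhood of $(z,n)$ to the cone boundary, a distance of order $(\beta-\beta')n$; both have probability exponentially small in $n$, uniformly in $z$, which is the displayed bound.

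The main obstacle is this last step: making the ``coupling inside a cone'' argument quantitative. The two delicate points are (a) showing that in the very supercritical regime the discrepancy region of the coupled process restricted to a cone is genuinely dominated by a subcritical field, so that discrepancies form only finite clusters, and (b) converting ``a discrepancy appears at $(z,n)$ deep inside the cone'' into ``a long defect path exists,'' so that the polynomial union bound over the $\sim n^d$ sites of $\HH^{rn}_n$ is absorbed by the exponential decay. Both are carried out in Chapter~5 of \cite{CDP13}; the hypotheses $\theta\le\theta_1$ (very supercritical) and $r\le r_1$ (cube radius small compared with the edge speed) are precisely what make the union bound close.
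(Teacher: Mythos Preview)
The paper does not prove this lemma at all: it is quoted verbatim as Lemma~5.1 of \cite{CDP13} and used as a black box, so there is no ``paper's own proof'' to compare against. Your sketch is a reasonable high-level outline of the kind of coupling argument used for very supercritical oriented percolation (restart to find a good seed, edge-speed/cone geometry, and a contour bound on discrepancies), and you correctly flag the two delicate technical points that the cited reference handles; since the paper defers entirely to \cite{CDP13}, that is the appropriate level of detail here.
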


 Previously in the case $d=1$, we prove extinction of our process $\xi_t$ on the event $\Omega^0$.
This is not good enough in $d=2$ because the space-time blocks associated with sites in $V^0_n=\HH_n \backslash W^0_n$ might contain 1's. To prove extinction, we need to trace backwards in time by looking at the dual process of $\xi_t$. However, in the corresponding grid $\LL_D$, 1's may spread sideways through several dry regions, so we need to introduce an additional set of edges $\EE_\downarrow$ for $\LL_D$. Let 
$\EE_{\downarrow}$ be the set of oriented edges from $z$ to $z-v_i$, $1\leq i\leq D$, and from $z$ to $z+v_i-v_j$ for $1\leq i \neq j \leq D$.

 Lemma \ref{lemma5.1} implies that on $\Omega^0_\infty$, $W^0_n\cap \HH^{rn}_n=\bar W_n\cap \HH^{rn}_n$ for large $n$. Hence we will consider the dry sites. Now fix $r>0$ and let $\BB_n$ be the dry sites in $\HH^{rn/4}_n$ connected to the complement of $\cup_{m=n/2}^n \HH^{rm/2}_m$ by a path of dry sites on the graph with edges $\EE_\downarrow$, where the last site in the path need not be dry. Lemma 5.5 in \cite{CDP13} will give our desired result.

\begin{lemma}[Lemma 5.5 in \cite{CDP13}]\label{lemma5.5}
There exists some $\theta_0>0$ so that if $\theta\leq \theta_0$ then 
$$P( \BB_n\neq \emptyset \text{ infinitely often})=0.$$
\end{lemma}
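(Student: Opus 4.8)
The plan is to prove the summable bound $\sum_{n\ge 1}P(\BB_n\neq\emptyset)<\infty$ and then apply the Borel--Cantelli lemma. As arranged just above the statement, we may take the site variables $\{\eta(z):z\in\LL_D\}$ i.i.d.\ with $P(\eta(z)=1)=1-\theta$ for $\theta$ small, and it is in this regime that the constants below will be extracted.

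\textbf{Step 1: a nonempty $\BB_n$ forces a long all-dry path.} Suppose $\BB_n\neq\emptyset$; then there is an $\EE_\downarrow$-path $(w_0,\dots,w_L)$ with $w_0\in\HH^{rn/4}_n$, with $w_0,\dots,w_{L-1}$ dry, and with $w_L\notin\bigcup_{m=n/2}^{n}\HH^{rm/2}_m$. Each $\EE_\downarrow$-edge either lowers the level by one or leaves it fixed, and in both cases displaces the spatial coordinate by at most a constant $D_1=D_1(D)$. If $w_L$ lies on a level $m_L<n/2$, the path contains more than $n/2$ level-lowering steps. If instead $n/2\le m_L\le n$, then the sup-norm of $w_L$ exceeds $rm_L/2$ while that of $w_0$ is at most $rn/4$, so the path has at least $(rm_L/2-rn/4)/D_1$ steps as well as at least $n-m_L$ level-lowering steps; minimizing this linear lower bound over $m_L\in[n/2,n]$ (which, since $r$ is small, is decreasing in $m_L$) gives $L\ge cn$ for a constant $c=c(r,D)>0$ in every case. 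Hence $\{\BB_n\neq\emptyset\}$ is contained in the event that some all-dry $\EE_\downarrow$-path of length at least $cn$ starts from one of the $O(n^2)$ sites of $\HH^{rn/4}_n$.

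\textbf{Step 2: dry structures have exponential tails (Peierls).} The crux is the estimate that, for $\theta$ small, there are $C,\gamma>0$ --- with $\gamma$ as large as desired once $\theta$ is small enough --- such that
$$
P\big(\text{some all-dry }\EE_\downarrow\text{-path of length}\ge L\text{ starts at a fixed site of }\LL_D\big)\le Ce^{-\gamma L}.
$$
This should follow by a contour argument: at density $1-\theta$ the upward percolation is supercritical and level $0$ is entirely wet, so a dry site at level $m$ must be separated from level $0$ by a cutset of closed sites, and more quantitatively a connected dry region of size $k$ must contain, within $O(1)$ levels below it, at least a fixed fraction $c'k$ of closed sites (otherwise wetness would seep in from below). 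Since the number of shapes of such a region grows at most like $A^k$, the probability of a dry region of size $\ge L$ is at most $\sum_{k\ge L}A^k(c''\theta)^{c'k}\le Ce^{-\gamma L}$ once $A(c''\theta)^{c'}<e^{-\gamma}$. The ingredients here --- linear-rate retreat of the dry set $\bar V_m$ and exponential tails for how long the ancestor process of a dry site survives --- are exactly the facts developed in Chapter~5 of \cite{CDP13} (and already invoked in Lemma \ref{lemma5.1}), which I would quote rather than reprove.

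\textbf{Step 3: conclusion.} Combining Steps 1 and 2 with a union bound over the $O(n^2)$ possible starting sites,
$$
P(\BB_n\neq\emptyset)\le Cn^2\cdot e^{-\gamma c n},
$$
which is summable in $n$, so Borel--Cantelli gives $P(\BB_n\neq\emptyset\text{ i.o.})=0$. The step I expect to be the real obstacle is Step 2: converting supercriticality of the $(1-\theta)$-percolation into a genuine quantitative contour bound localizing dry regions near closed sites, since in the $d+1=3$-dimensional oriented lattice the relevant blocking sets are not planar dual curves and the combinatorics is delicate; this is precisely where the machinery of Chapter~5 of \cite{CDP13} does the heavy lifting, and given those inputs the remainder is routine counting.
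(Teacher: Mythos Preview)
The paper does not supply a proof of this lemma; it is quoted directly as Lemma~5.5 of \cite{CDP13} and used as a black box in Section~\ref{sec:compop}. So there is no in-paper argument to compare against. Your three-step outline --- linear lower bound on the length of any qualifying dry $\EE_\downarrow$-path, exponential tail for long dry paths when $\theta$ is small, then union bound plus Borel--Cantelli --- is exactly the skeleton of the proof in \cite{CDP13}, and you correctly identify Step~2 as the place where the real work lies and defer to that reference for it.

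Two small corrections. In Step~1 the parenthetical ``(which, since $r$ is small, is decreasing in $m_L$)'' is muddled: you actually have two separate lower bounds on $L$, namely $n-m_L$ (decreasing in $m_L$) and $(rm_L/2-rn/4)/D_1$ (increasing in $m_L$), and what you need is that their maximum is at least $cn$ uniformly over $m_L\in[n/2,n]$, which follows since the two bounds cross at a point where both are of order $n$. In Step~2 the heuristic ``a connected dry region of size $k$ must contain, within $O(1)$ levels below it, at least $c'k$ closed sites'' is not literally true: dryness is nonlocal, and the closed sites that cause a given dry site can sit many levels below. The argument in \cite{CDP13} instead controls, for each dry site, the cluster of sites reachable from it by reversed open $\EE_\uparrow$-paths; the lower boundary of that cluster consists entirely of closed sites, and a counting/contour bound on such clusters yields the exponential tail. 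Since you explicitly intend to quote \cite{CDP13} here rather than reprove it, your overall plan is sound and consistent with how the paper itself treats the lemma.
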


Recall that a wet site in $\HH^{rn}_n$ corresponds to a good space-time block while a dry site in $\HH^{rn}_n$ corresponds to a space-time block which may contain a 1 in its translation of $[-K,K]^2\times [T,3T]$. Since a dry site $(z,n)\in \HH^{rn/4}_n$ corresponds to a block containing a 1 in $(c_L z+ [-K,K]^2)\times [(n+1)T, (n+3)T]$, there must be a dual path leading from this 1 to a site outside of $\cup_{m=n/2}^n \HH^{rm/2}_m$, which corresponds to a path of dry sites in $\EE_\downarrow$. This can not happen on $\{\BB_n=\emptyset\}$. 

Lemma \ref{gblock} guarantees that when $\alpha$ is sufficiently small we have $\theta\leq \theta_0$. It then follows from Lemma \ref{lemma5.5} that for sufficiently large $n$, there cannot be any 1's in $[-c_Lrn/4,c_Lrn/4]^2\times [(n+1)T,(n+3)T]$. Otherwise there must be a dry site in $\HH^{rn/4}$ connected to the component of  $\cup_{m=n/2}^n \HH^{rm/2}_m$ by a path of dry sites through edges in $\EE_\downarrow$, which contradicts the fact that $\BB_n=\emptyset$ for sufficiently large $n$. Therefore, we have a linearly growing dead zone $[-c_Lrn/4,c_Lrn/4]^2$ that will take over the whole space, leading to the extinction of the 1's.

\clearp

\end{document}